\documentclass{article}
\usepackage[margin = 1in]{geometry}
\usepackage{amsmath, amssymb}
\usepackage{float}

\usepackage{amsthm}
\theoremstyle{plain}
\newtheorem{theorem}{Theorem}[section]
\newtheorem{lemma}[theorem]{Lemma}
\newtheorem{proposition}[theorem]{Proposition}
\theoremstyle{definition}
\newtheorem{definition}[theorem]{Definition}
\newtheorem{example}[theorem]{Example}
\theoremstyle{remark}
\newtheorem{remark}[theorem]{Remark}
\newtheorem{algorithm}[theorem]{Algorithm}

\usepackage[style = alphabetic, maxalphanames=4, maxnames = 4, doi = false, url = false, isbn = false, firstinits = true]{biblatex}
\DeclareFieldFormat[article, misc, incollection, inbook, inproceedings, book]{title}{#1}

\usepackage{quiver}
\usepackage{tikz}

\newcommand{\spec}{\mathrm{Spec}\,}
\newcommand{\proj}{\mathrm{Proj}\,}
\newcommand{\gr}{\mathrm{gr}}
\newcommand{\coker}{\mathrm{coker}}
\newcommand{\Div}{\mathrm{Div}}

\newcommand{\ord}{\mathrm{ord}}
\newcommand{\comp}{\mathrm{Comp}}
\newcommand{\sing}{\mathrm{Sing}}
\newcommand{\inc}{\mathrm{Inc}}
\newcommand{\br}{\mathrm{Br}}

\newcommand{\onto}{\twoheadrightarrow}
\newcommand{\into}{\hookrightarrow}

\title{Defining reduction types of curves via minimal regular and minimal normal crossings models}
\author{Jakab Schrettner}
\begin{document}
\maketitle
\begin{abstract}
	We propose a definition of the reduction type of a curve over a discretely valued field in terms of the special fibre of an arbitrary regular model. We show that under this definition, the reduction type in terms of the minimal regular model determines and reduction type in terms of the minimal regular normal crossings model and vice versa. We also show that our definition is compatible with earlier classification results in low genus, namely the Kodaira-Néron classification for elliptic curves and the classification result of Namikawa-Ueno in genus 2. The essential new element in our definition is a suitable invariant of the singularities on the special fibre, building on the notion of equisingularity introduced by Zariski.
\end{abstract}
\tableofcontents
\section{Introduction}
Let $K$ be a discretely valued field with valuation ring $\mathcal{O}_K$ and algebraically closed residue field $k$. In the study of (smooth, projective) curves $C$ over $K$, one often considers their reduction type: this is a discrete invariant given by a combinatorial description of the special fibre of a `nice' choice of model $X\to \spec \mathcal{O}_K$ of $C$. In many cases, $X$ is chosen to be the minimal regular model of $C$: this is done for example in the classical Kodaira-Néron classification of reduction types of elliptic curves \cite{Kodaira} \cite{Neron}, and the classification of reduction types of genus 2 curves by Namikawa-Ueno \cite{namikawa-ueno} (building on an earlier `numerical' classification by Ogg \cite{ogg}). One can however also choose to classify curves by the special fibre of a minimal regular normal crossings model, see for example the work of Dokchitser \cite{tim_classification} and the genus 2 classification of Aylward, Kellock, Dokchitser and Lupoian \cite{genus_2_tables}.

Specifying the reduction type of a curve $C$ involves making a choice of model $X$ of $C$ and specifying in some way the following data:
\begin{itemize}
	\item The combinatorial configuration of components in the special fibre: by this we mean a list of components $\Gamma_1, \ldots, \Gamma_r$ of the special fibre, and specifying which ones intersect and in how many points,
	\item Some numerical data about the components: by this we mean the multiplicity of each component in the special fibre, the genus of the normalisation of each component (and/or its arithmetic genus), and the intersection numbers $\Gamma_i\cdot \Gamma_j$,
	\item Some information about the singularities on the components: whether a singular point is a cusp or a node, etc.
\end{itemize}
If $X$ is the minimal normal crossings model, this can be made precise to define a notion of reduction type, see \cite[Definition 3.6]{tim_classification}. However if $X$ is the minimal regular model, such a definition is (to the author's knowledge) not available in arbitrary genus, essentially because there is no classification of possible singularities that can arise on the special fibre. In low genus this can be handled in an ad hoc way: in the genus 1 Kodaira-Néron classification, the only relevant distinction is the one between a cusp and a node on a rational curve. In genus 2 there are some more possibilities: in \cite{namikawa-ueno} one can find, in addition to cusps and nodes, a reduction type involving a $(2,5)$-cusp (type $\mathrm{VIII-1}$ in loc.cit), and a singularity where two branches are tangent (type $\mathrm{III-II}_0$, which we will call a \emph{tacnode}).

Our main result is a definition of reduction type in terms of an arbitrary regular model $X$, which keeps track of the combinatorial configuration of components on the special fibre, numerical data and appropriate invariants of each singularity. Specifically for each singularity we record the set of its branches, the local intersection numbers between branches and an invariant called the \emph{valuation semigroup} associated to each branch.
\begin{definition}[= Definition \ref{definition_type}]
	An \emph{(abstract) type} is a tuple $T = (\comp, d, g, \sing, \inc, \br, S, i)$ where
	\begin{enumerate}
		\item $\comp$ is a nonempty finite set whose elements $\Gamma$ are called components, and $d:\comp \to \mathbb{Z}_{>0}$, $g:\comp \to \mathbb{Z}_{\ge 0}$ are functions, which we think of as associating to $\Gamma$ its multiplicity $d_\Gamma$ and (geometric) genus $g_\Gamma$.
		\item $\sing$ is a finite set whose elements $p$ are called singular points, along with an incidence relation $\inc\subseteq \sing\times\comp$; if $(p, \Gamma)\in \inc$ we say $p$ is on $\Gamma$ and write $p\in \Gamma$.
		\item $\br = (\br_{\Gamma, p})_{\Gamma,p}$ is a collection of nonempty finite sets $\br_{\Gamma, p}$ for all components $\Gamma$ and singular points $p$ with $p\in \Gamma$. Elements $B$ of $\br_{\Gamma, p}$ are  branches of $\Gamma$ through $p$. The function $S$ associates to each branch $B\in  \coprod\br_{\Gamma, p}$ a numerical semigroup $S(B)$ which we think of as the valuation semigroup of the branch $B$. The function $i$ associates to each pair of distinct branches $B,B'$ through the same point $p$ an intersection number $i_p(B, B')$, which is a positive integer. 
	\end{enumerate}
	(For the definition of a numerical semigroup see Definition \ref{definition_numerical_semigroup})
	
	The \emph{type} of an arithmetic surface $X$ over $\mathcal{O}_K$ is the abstract type associated to the special fibre $X_s$ in a natural way (Definition \ref{def_type_of_surface}). If $C$ is a smooth projective curve over $K$, we define its \emph{minimal regular type} and \emph{minimal normal crossings type} to be the type of its minimal regular and minimal regular normal crossings model, respectively.
\end{definition}
\begin{example}\label{example:low_genus_singularities}
	If $X$ is the minimal regular model of an elliptic curve of Kodaira type $\mathrm{I}_1$, then the special fibre $X_s$ consists of a single rational (genus 0) curve of multiplicity 1, with a node $p$. Then the type of $X$ has 1 component $\Gamma$ with a unique singular point $p$ lying on it. Moreover $\Gamma$ has two branches $B_1, B_2$ through $p$ which intersect with multiplicity $1 = i_p(B_1, B_2)$, and their valuation semigroups are $S(B_1) = S(B_2) = \mathbb{Z}_{>0}$.
	
	If $X$ is the minimal regular model of an elliptic curve of Kodaira type $\mathrm{II}$, then the special fibre $X_s$ consists of a single rational (genus 0) curve of multiplicity 1, with a cusp $p$. Then the type of $X$ has 1 component $\Gamma$ with a unique singular point $p$ lying on it. However $\Gamma$ only has one branch $B$ through $p$, whose valuation semigroup is $S(B) = \left\langle 2,3\right\rangle = \{2a+3b\;:\;a,b\in \mathbb{Z}_{>0}\}$.
		\begin{figure}[H]
			\centering
			\tikz{\node[label=-90:$\mathrm{I}_1$](1){\resizebox{!}{1.5cm}{\begin{tikzpicture}
											\clip (-1.6,-1) rectangle + (2, 2);
											\draw[thick] (0, 1) .. controls (-2,-3) and (-2, 3) .. (0, -1);
								\end{tikzpicture}}};}
			\hspace*{2cm}
			\tikz{\node[label=-90:$\mathrm{II}$](2){\resizebox{!}{1.5cm}{\begin{tikzpicture}
											\draw[semithick] (1,0.7) to [out = -100, in = 0] (0,0) to [out = 0, in = 100] (1, -0.7);
											\node[left, red, scale = 0.4] at (0,0) {$\left\langle 2,3\right\rangle$};
								\end{tikzpicture}}};}
		\end{figure}
	The $(2,5)$-cusp in Namikawa-Ueno type $\mathrm{VIII-I}$ is a germ with a single branch whose valuation semigroup is $\left\langle 2,5\right\rangle = \{2a+5b\;:\;a,b\in \mathbb{Z}_{>0}\}$. 
	
	The singularity in Namikawa-Ueno type $\mathrm{III-II}_0$ is one with two branches, both smooth (hence having valuation semigroup $S = \mathbb{Z}_{>0}$), and they intersect with multiplicity 2 (we call this a \emph{tacnode}).
	\begin{figure}[H]
		\centering
		\tikz{\node[label=-90:$\mathrm{VII-1}$](2){\resizebox{!}{1.5cm}{\begin{tikzpicture}
						\draw[semithick] (1,0.7) to [out = -100, in = 0] (0,0) to [out = 0, in = 100] (1, -0.7);
						\node[left, scale = 0.4, red] at (0,0) {$\left\langle 2,5\right\rangle$};
			\end{tikzpicture}}};}
		\hspace*{2cm}
		\tikz{\node[label=-90:$\mathrm{III}-\mathrm{II}_0$](2){\resizebox{!}{1.5cm}{\begin{tikzpicture}
						\draw[thick] (1,1) to [out = -100, in = 0] (0,0);
						\draw[thick] (1, -1) to [out = 100, in = 0] (0,0);
						\draw[thick] (0,0) to [out = -180, in = 0] (-1.3, 1);
						\draw[thick] (0,0) to [out = -180, in = 0] (-1.3, -1);
						\draw[thick] (-1.3, 1) to [out = -180, in = -180] (-1.3, -1);
			\end{tikzpicture}}};}
	\end{figure}
\end{example}
To justify this definition, we show that the two notions of type (minimal regular and minimal normal crossings) are determined by one another, and that in low genus (genus 1 and 2) one recovers the existing classification results of Kodaira-Néron and Namikawa-Ueno in terms of the minimal regular model. Our main theorems are hence the following:
\begin{enumerate}
	\item (Theorem \ref{min_reg<->rnc}) Given a smooth projective curve $C$ over $K$, its minimal regular reduction type determines its minimal normal crossings reduction type and vice versa. In fact both of them are determined by the type of any regular model $X$ of $C$. Moreover we can give a procedure to determine one in terms of the other (Algorithm \ref{algorithm_blowup}). Hence we can call either of these the \emph{reduction type} of $C$.
	\item (Theorem \ref{genus_1_classification}, Theorem \ref{genus_2_classification}) One can classify curves of genus 1 and 2 by their minimal regular reduction type. The minimal regular reduction type of an elliptic curve is a Kodaira type, and that of a general genus 1 curve is a multiple of a Kodaira type. The minimal regular reduction types of genus 2 curves are given as in Namikawa-Ueno \cite{namikawa-ueno}.
\end{enumerate}
Hence classifying curves in terms of their minimal regular models is equivalent to classifying them by normal crossings models, and one can translate between these two notions easily. Note also that Namikawa-Ueno in \cite{namikawa-ueno} work over $\mathbb{C}$ and obtain their classification using analytic methods; our result shows that the resulting classification of reduction types still holds over an arbitrary discrete valuation ring (and in particular in arbitrary residue characteristic).

To show (1) above, we note that the minimal regular normal crossings model is obtained from the minimal regular model by a finite sequence of blowups at closed points. We therefore study how the type changes upon blowing up at a closed point. We prove (Theorem \ref{blowup_type}) that under a blowup $\widetilde{X}\to X$ at a closed point, the type of $\widetilde{X}$ determines the type of $X$ and vice versa. In fact this shows something more general: that the minimal regular/minimal normal crossings reduction types are determined by the type of any regular model (since any two regular models are related by sequences of blowups and blow-downs).

To show (2), we use Dokchitser's classification of normal crossings reduction types \cite{tim_classification}, then use our results to deduce the corresponding minimal regular types in each case.

\begin{example}\label{example_genus_3}
	Consider the curves $C_1: y^2 = x^7 + p$ and $C_2: y^3 = x^4 + p$ over $K = \mathbb{Q}_p^{\mathrm{ur}}$. Both of these are smooth of genus 3 (one is hyperelliptic, the other is a plane quartic), and the equations given define regular models $\mathcal{C}_1$ and $\mathcal{C}_2$. For both of them the special fibre consists of a single component with a unique unibranch singularity at the origin (and hence both models are minimal regular), and their semigroups are $S_1 = \left\langle 2,7\right\rangle$ and $S_2 = \left\langle 3,4\right\rangle$ respectively. Performing blowups to obtain normal crossings models, we obtain the following reduction types:
	\begin{figure}[H]
		\centering
		\tikz[remember picture]{\node(1){%
				\resizebox{!}{1cm}{%
					\begin{tikzpicture}
						\draw[thick] (1,0.7) to [out = -100, in = 0] (0,0) to [out = 0, in = 100] (1, -0.7);
						\node[left, scale = 0.5, red] at (0,0){$\left\langle2,7\right\rangle$};
			\end{tikzpicture}}};}
		\hspace*{0.5cm}
		\tikz[remember picture]{\node(2){%
				\resizebox{!}{1cm}{%
					\begin{tikzpicture}
						\draw[thick] (0, -0.7) to (0, 0.7);
						\draw[thick] (1, 0.7) to [out = -100, in = 0] (0,0) to [out = 0, in = 100] (1, -0.7);
						\node[left, scale = 0.5, red] at (0,0){$\left\langle 2,5\right\rangle$};
						\node[left, scale = 0.5, blue] at (0, 0.6){2};
			\end{tikzpicture}}};}
		\hspace*{0.5cm}
		\tikz[remember picture]{\node(3){%
				\resizebox{!}{1cm}{%
					\begin{tikzpicture}
						\draw[thick] (0,0.7) to (0, -1);
						\draw[thick] (1, 0.7) to [out = -100, in = 0] (0,0) to [out = 0, in = 100] (1, -0.7);
						\draw[thick] (-0.1, -0.3) to (0.8, -1);
						\node[left, scale = 0.6, red] at (0,0) {$\left\langle 2,3\right\rangle$};
						\node[left, scale = 0.6, blue] at (0, 0.6){4};
						\node[below, scale = 0.6, blue] at (0.6, -0.9){2};				
			\end{tikzpicture}}};}
		\hspace*{0.5cm}
		\tikz[remember picture]{\node(4){%
				\resizebox{!}{1cm}{%
					\begin{tikzpicture}
						\draw[thick] (0, 0.7) to (0, -1);
						\draw[thick] (0.6, 0.8) to [out = -150, in = 90] (0,0.2) to [out = -90, in = 150] (0.6, -0.4);
						\draw[thick] (-0.1, -0.7) to (0.6, -1.2);
						\draw[thick] (0.5, -1.2) to (1.2, -0.7);
						\node[left, scale = 0.6, blue] at (0, 0.6){6};
						\node[left, scale = 0.6, blue] at (0.3, -1.1){4};
						\node[right, scale = 0.6, blue] at (0.8, -1.1){2};
			\end{tikzpicture}}};}
		\hspace*{0.5cm}
		\tikz[remember picture]{\node(5){%
				\resizebox{!}{1cm}{%
					\begin{tikzpicture}
						\clip (0,-0.5) rectangle + (3, 2);
						\draw[thick] (0,0) to (3,0);
						\draw[thick] (0.8, -0.1) to (0.1, 0.6);
						\draw[thick] (0.6, -0.1) to (1.3, 0.6);
						\draw[thick] (1.7, -0.1) to (1.7, 0.9);
						\draw[thick] (1.6, 0.8) to (2.6, 0.8);
						\node[below, blue, scale = 0.6] at (2.5, 0) {6};
						\node[above, blue, scale = 0.6] at (1.1, 0.4){7};
						\node[right, blue, scale = 0.6] at (1.7, 0.5){4};
						\node[above, blue, scale = 0.6] at (2.5, 0.8){2};
			\end{tikzpicture}}};}
		\hspace*{0.5cm}
		\tikz[remember picture]{\node(6){%
				\resizebox{!}{1cm}{%
					\begin{tikzpicture}
						\clip (0,-0.5) rectangle + (3, 2);
						\draw[thick] (0,0) to (3,0);
						\draw[thick] (0.2, -0.1) to (0.2, 1);
						\draw[thick] (0.6, -0.1) to (0.6, 1);
						\draw[thick] (0.5, 0.9) to (1.5, 0.9);
						\draw[thick] (1.4, 0.8) to (1.4, 1.8);
						\draw[thick] (1.9, -0.1) to (1.9, 1);
						\node[right, blue, scale = 0.6] at (0.6, 0.5) {6};
						\node[above, blue, scale = 0.6] at (1, 0.9) {4};
						\node[right, blue, scale = 0.6] at (1.4, 1.3) {2};
						\node[right, blue, scale = 0.6] at (1.9, 0.5) {7};
						\node[below, blue, scale = 0.6] at (2.5, 0) {14};
			\end{tikzpicture}}};}
		\tikz[overlay, remember picture]{
			\draw[latex-, semithick] (1) -- (1-|2.west);
			\draw[latex-, semithick] (2) -- (2-|3.west);
			\draw[latex-, semithick] (3) -- (3-|4.west);
			\draw[latex-, semithick] (4) -- (4-|5.west);
			\draw[latex-, semithick] (5) -- (5-|6.west);
		}
	\end{figure}
	\vspace*{0.5cm}
	\begin{figure}[H]
		\centering
		\tikz[remember picture]{\node(1){%
				\resizebox{!}{1cm}{%
					\begin{tikzpicture}
						\draw[thick] (1,0.7) to [out = -100, in = 0] (0,0) to [out = 0, in = 100] (1, -0.7);
						\node[left, scale = 0.5, red] at (0,0){$\left\langle3,4\right\rangle$};
			\end{tikzpicture}}};}
		\hspace*{0.5cm}
		\tikz[remember picture]{\node(2){%
				\resizebox{!}{1cm}{%
					\begin{tikzpicture}
						\draw[thick] (0, -1) to (0, 1);
						\draw[thick] (-0.4, -1) to [out = 45, in = -90] (0,0) to [out = 90, in = -135] (0.4, 1);
						\node[left, scale = 0.6, blue] at (0, 0.9) {3};
			\end{tikzpicture}}};}
		\hspace*{0.5cm}
		\tikz[remember picture]{\node(3){%
				\resizebox{!}{1cm}{%
					\begin{tikzpicture}
						\draw[rotate = -90, thick] (-1, 0.5) parabola bend(0,0) (1, 0.5);
						\draw[rotate = 90, thick] (-1, 0.5) parabola bend(0,0) (1, 0.5);
						\draw[thick] (-1, 0) to (1,0);
						\node[left, scale = 0.6, blue] at (-0.25, 0.6) {3};
						\node[below, scale = 0.6, blue] at (0.9, 0) {4};				
			\end{tikzpicture}}};}
		\hspace*{0.5cm}
		\tikz[remember picture]{\node(4){%
				\resizebox{!}{1cm}{%
					\begin{tikzpicture}
						\clip (0,-0.5) rectangle + (3, 2);
						\draw[thick] (0,0) to (3,0);
						\draw[thick] (0.8, -0.1) to (0.1, 0.6);
						\draw[thick] (0.6, -0.1) to (1.3, 0.6);
						\draw[thick] (1.7, -0.1) to (1.7, 0.9);
						\node[below, blue, scale = 0.6] at (2.5, 0) {8};
						\node[above, blue, scale = 0.6] at (1.1, 0.4){3};
						\node[right, blue, scale = 0.6] at (1.7, 0.5){4};
			\end{tikzpicture}}};}
		\hspace*{0.5cm}
		\tikz[remember picture]{\node(5){%
				\resizebox{!}{1cm}{%
					\begin{tikzpicture}
						\clip (0,-0.5) rectangle + (3, 2);
						\draw[thick] (0,0) to (3,0);
						\draw[thick] (0.2, -0.1) to (0.2, 1);
						\draw[thick] (0.6, -0.1) to (0.6, 1);
						\draw[thick] (1, -0.1) to (1, 1);
						\draw[thick] (0.9, 0.9) to (1.9, 0.9);
						\node[right, blue, scale = 0.6] at (0.6, 0.5) {3};
						\node[right, blue, scale = 0.6] at (1, 0.5) {8};
						\node[above, blue, scale = 0.6] at (1.4, 0.9) {4};
						\node[below, blue, scale = 0.6] at (2.5, 0) {12};
			\end{tikzpicture}}};}
		\tikz[overlay, remember picture]{
			\draw[latex-, semithick] (1) -- (1-|2.west);
			\draw[latex-, semithick] (2) -- (2-|3.west);
			\draw[latex-, semithick] (3) -- (3-|4.west);
			\draw[latex-, semithick] (4) -- (4-|5.west);
		}
	\end{figure}
	Hence the normal crossings types of these curves are given as in the pictures on the right.
\end{example}

The essential ingredient in our definition of types is a suitable invariant of a singularity found on the special fibre. It is natural to study a singularity in terms of the completed local ring $\widehat{\mathcal{O}}_{X_s, p}$ of the special fibre at a point $p$. It turns out that this local ring is always planar, i.e. there is a surjection $k[[x,y]]\onto \widehat{\mathcal{O}}_{X_s, p}$, hence the relevant singularities are plane curve singularities. We are thus led to the study of \emph{plane curve germs}, i.e. rings of the form $k[[x,y]]/(f)$ for a non-unit power series $f$.

Let us briefly describe the invariant we use: it is the the \emph{equisingularity type} of the germ of a plane curve, introduced by Zariski in \cite{zariski1986probleme} (see also the English translation \cite{zariski2006moduli}). For irreducible germs (branches), the equisingularity types correspond to components of the moduli space of plane curve germs up to analytic isomorphism. They are encoded by an invariant called the \emph{valuation semigroup} (Definition \ref{definition_semigroup}), which is a subset $S\subset \mathbb{Z}_{>0}$ of the positive integers closed under addition. For general germs, we decompose them into branches (irreducible components), and keep track of the valuation semigroup of each branch, as well as the intersection multiplicity of any pair of distinct branches. This is an invariant that `sees' the combinatorics of the resolution process of the singularity, in order to satisfy property (1) above, and is also an invariant that is discrete so that it can be used for classification purposes as in property (2).

\subsection*{Layout}
In Section \ref{section_plane_curves} we study plane curve germs: we define plane curve germs and branches, the branches of a plane curve germ, the valuation semigroup and the intersection numbers of branches. We will also look at how all these invariants change under blowing up. The definition of the semigroup goes back to Zariski \cite{zariski1986probleme} in the case $k = \mathbb{C}$, however we need a definition that works in arbitrary characteristic, so we present things slightly differently, mostly citing the results of Płoski \cite{plane_algebraic_curves}, \cite{poski2014plane}.

In Section \ref{section_global_curves}, we relate this to the study of projective curves $Y$ over $k$ (`global curves'). We show that when $Y$ is locally planar (meaning that its complete local rings are quotients of $k[[x,y]]$), then invariants of $Y$ at points (multiplicity, genus, intersection numbers) depend only on the invariants of the germ of $Y$ at $p$ defined in the previous section. We will also use this to classify plane curve singularities that can arise on curves of low genus: we show that the only possible singularities on integral curves of genus $\le 2$ are those seen in Example \ref{example:low_genus_singularities}, namely nodes, cusps, $(2,5)$-cusps and tacnodes.

In Section \ref{section_arithmetic_surfaces} we define the type of an arithmetic surface and the minimal regular/minimal normal crossings reduction type, and we prove that one determines the other. We do this by studying how blowups at closed points change the type of an arithmetic surface.

Finally in Section \ref{section_low_genus} we show that our definition of reduction type recovers the existing classifications of Kodaira-Néron in genus 1 and Namikawa-Ueno in genus 2.
\subsection*{Notation and conventions}
Throughout $k$ will denote an algebraically closed field. In Sections \ref{section_arithmetic_surfaces} and \ref{section_low_genus} we will work over a discrete valuation ring $\mathcal{O}_K$ with fraction field $K$ and residue field $k$ (which will still be assumed algebraically closed).

The scheme $\spec \mathcal{O}_K$ has a closed point $s$, and a generic point $\eta$. If $X$ is an $\mathcal{O}_K$-scheme, the fibre above $s$ is the special fibre $X_s$, and the fibre above $\eta$ is the generic fibre $X_\eta$. 

An \emph{arithmetic surface} is a regular proper flat $\mathcal{O}_K$-scheme $X\to \spec \mathcal{O}_K$ of relative dimension 1. By a \emph{model} of a smooth projective curve $C$ over $K$ we mean a proper flat $\mathcal{O}_K$-scheme $X$ along with an isomorphism $X_\eta \cong C$. In this paper we will only consider regular models, where $X$ is assumed to be a regular scheme, and is therefore an arithmetic surface. By a regular \emph{normal crossings} model $X$ we mean one where the special fibre $X_s$ is a normal crossings divisor in the sense of \cite[Definition 9.1.16]{liu}, in particular the special fibre is not assumed to be reduced.

For a power series $f\in k[[t]]$ or $f\in k[[x,y]]$, its \emph{order} is the smallest degree of a monomial appearing in $f$ with nonzero coefficient. Sometimes we'll write this as $\ord\,f$ or (if $f\in k[[t]]$) as $\ord_t\,f$. When we write $f$ as a sum of homogeneous terms, we mean a decomposition of the form
\[f = f_\mu + f_{\mu+1}+\ldots\]
where each $f_i$ is homogeneous of degree $i$, and $\mu$ is the order of $f$. The \emph{initial form} of $f$ is $f_\mu$, the smallest nonzero homogeneous term in $f$. A \emph{linear form} $l(x,y)$ is an expression of the form $l(x,y) = ax + by$ for $a,b\in k$ not both 0.

For any scheme $X$ and a point $p\in X$, we denote by $\mathcal{O}_{X,p}$ the stalk of the structure sheaf of $X$ at $p$, and by $\widehat{\mathcal{O}}_{X,p}$ its completion with respect to its maximal ideal. We denote by $X_{red}$ the underlying reduced closed subscheme.

We denote by $\mathbb{Z}_{\ge 0}$ the set of nonnegative integers and by $\mathbb{Z}_{>0}$ the set of positive integers.
\subsection*{Acknowledgements}
I would like to thank Vladimir Dokchitser for suggesting this problem to me and for helpful discussions and guidance throughout the development of this paper. I would also like to thank the participants of the \emph{Tamagawa Numbers of curves} research meeting for their feedback on an early version of this work.
\section{Plane curve germs}\label{section_plane_curves}
Let $k$ be an algebraically closed field. In this section we define and study plane curve germs, which are germs of plane curves near a point. First we study branches, which are irreducible germs, and define the valuation semigroup (Definition \ref{definition_semigroup}). Then we look at reducible germs, and define the intersection number of two distinct branches (Definition \ref{definition_intersection}) and also the notion of tangency of branches (Definition \ref{definition_tangency}). Finally, we look at how all these change upon blowing up, the result being described by Theorems \ref{blowup_germ} and \ref{blowup_intersection}.
\begin{definition}
	A \emph{plane curve germ} (over $k$) is an affine scheme $C = \spec A$ where $A$ is a 1-dimensional complete local $k$-algebra of the form $A = k[[x,y]]/(f)$ for some $f\in k[[x,y]]$. The \emph{branches} of a plane curve germ $C$ are the schemes $\spec A/\mathfrak{p}$ where $\mathfrak{p}$ is a minimal prime in $A$ (i.e. the irreducible components of $C$). A \emph{plane curve branch} is a plane curve germ that is integral (or equivalently, reduced with just one branch).
\end{definition}
Given a plane curve germ $C = \spec k[[x,y]]/(f)$ and a factorisation $f = f_1^{a_1}f_2^{a_2}\ldots f_r^{a_r}$ of $f$ into irreducibles, the branches of $C$ are exactly $B_i = \spec k[[x,y]]/(f_i)$, and $C$ is a branch if and only if $f$ is irreducible. It's also clear to see that the underlying reduced scheme is $C_{red} = \spec k[[x,y]]/(f_1f_2\ldots f_r)$ is also a plane curve germ, and $C$ is reduced if and only if $a_i=1$ for all $i$. Moreover $C_{red}$ has the same branches as $C$ does.
\subsection{Plane curve branches}
Plane curve branches are of the form $\spec k[[x,y]]/(f)$ where $f$ is an irreducible power series.

The following lemma says that a plane curve branch can only have one `tangent direction'.
\begin{lemma}\label{tangent_direction}
	Suppose $f\in k[[x,y]]$ is an irreducible power series of order $\mu$. Write $f$ as a sum of homogeneous terms as
	\[f = f_\mu + f_{\mu+1}+\ldots\;.\]
	Then there is a linear form $l(x,y) = ax+by$, $a,b\in k$ such that $f_\mu = l^\mu$.
\end{lemma}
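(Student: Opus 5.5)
The plan is to argue by contradiction using Hensel's lemma in $k[[x,y]]$, in its form for factorizations of the initial form: a factorization of $f_\mu$ into coprime homogeneous factors lifts to a factorization of $f$.

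First I would reduce to a situation where Hensel's lemma applies in the usual one-variable form. Since $k$ is algebraically closed, the binary form $f_\mu$ factors as a product of linear forms:
\[f_\mu = c\prod_{j=1}^s l_j^{e_j},\]
with the $l_j$ pairwise non-proportional. Suppose, for contradiction, that $s\ge 2$. Then write $f_\mu = g_a h_b$, where $g_a = l_1^{e_1}$ and $h_b$ is the product of the remaining factors. These are coprime homogeneous forms of degrees $a,b\ge 1$ with $a+b=\mu$.

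The key step is to lift this to a factorization $f = gh$ in $k[[x,y]]$ with initial forms $g_a$ and $h_b$. I would construct $g = g_a+g_{a+1}+\cdots$ and $h = h_b+h_{b+1}+\cdots$ degree by degree. Comparing terms of degree $\mu+n$ requires
\[g_a h_{b+n} + h_b g_{a+n} = f_{\mu+n} - \sum g_{a+i}h_{b+j},\]
where the sum runs over $i+j=n$ with $0<i,j<n$ (the known terms). Since $g_a$ and $h_b$ are coprime forms of degrees $a$ and $b$, the map
\[(u,v)\mapsto g_a u + h_b v,\]
taking forms of degrees $b+n$ and $a+n$ to forms of degree $\mu+n$, is surjective for $n\ge 0$. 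This is the standard resultant / Sylvester argument. Concretely, dehomogenize with a coordinate $l_1$ not dividing $h_b$, or use that $(g_a,h_b)$ is a regular sequence in $k[x,y]$, so the degree $\ge a+b-1$ part of $k[x,y]$ lies in the ideal $(g_a,h_b)$. Hence each degree can be solved inductively, giving power series $g,h$ with $f=gh$. Both factors have positive order, so they are non-units, contradicting the irreducibility of $f$. Therefore $s=1$, so $f_\mu = c\,l_1^\mu$, and absorbing a $\mu$-th root of $c$ (which exists since $k$ is algebraically closed) into $l$ gives $f_\mu = l^\mu$.

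I expect the only real work to be the surjectivity statement for coprime binary forms in each degree $\ge a+b-1$. I would prove it by a dimension count: the kernel of
\[(u,v)\mapsto g_au+h_bv\]
consists of pairs $(h_b w, -g_a w)$ with $w$ of degree $n$, since $g_a$ and $h_b$ are coprime in the UFD $k[x,y]$. Comparing dimensions, $(b+n+1)+(a+n+1)-(n+1) = \mu+n+1$, which equals the dimension of the target, so the map is surjective.
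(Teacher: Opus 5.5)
Your proof is correct and follows the same route as the paper. Factor $f_\mu$ into linear forms; if two non-proportional factors occur, lift the coprime splitting $f_\mu = g_a h_b$ to a factorization $f = gh$ into non-units, contradicting irreducibility. The only difference is that the paper cites a standard Factorization Lemma for the lifting step, whereas you prove it directly: you build $g$ and $h$ degree by degree, using the dimension count to show that $(u,v)\mapsto g_a u + h_b v$ maps onto forms of degree $\mu+n$.
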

\begin{proof}
	Since $k$ is algebraically closed, $f_\mu$ factors as a product of linear forms. If there are two distinct factors among these, then \cite[Factorization lemma]{plane_algebraic_curves} implies that $f$ can also be factored as a product of two non-units, contradicting irreducibility.
\end{proof}
Thus given an irreducible power series $f$, we can change coordinates so that its initial form becomes e.g. $y^\mu$. We record this as a definition:
\begin{definition}
	Let $f\in k[[x,y]]$ be an irreducible power series of order $\mu$. We say $f$ is \emph{$y$-aligned} if its initial form is $y^\mu$. In this case $f$ can be written as
	\[f = y^\mu + a_1(x)y^{\mu-1}+\ldots + a_{\mu-1}(x)y + a_{\mu}(x)\]
	where the $a_i(x)\in k[[x]]$ are power series, and $a_i(x)$ has order $\ge i + 1$.
\end{definition}
Intuitively, a $y$-aligned power series is one that defines an irreducible curve germ whose only tangent line is the $x$-axis.

It follows from the above discussion that any plane curve branch can be written as $\spec k[[x,y]]/(f)$ for a $y$-aligned $f$ (pick $f$ arbitrary, with initial form $l^\mu$, then apply a linear automorphism of $k[[x,y]]$ which sends $l\mapsto y$).

We will also need the following result:
\begin{theorem}\label{branch_normalisation}
	Let $C = \spec A$ be a plane curve branch. Then the normalisation of $A$ (i.e. its integral closure in its field of fractions) is isomorphic to a power series ring $k[[t]]$ in one variable. Moreover the quotient $k[[t]]/A$ is finite-dimensional over $k$.
\end{theorem}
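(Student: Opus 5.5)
The plan is to use the standard structure theory of complete local domains. Write $A = k[[x,y]]/(f)$ with $f$ irreducible, so $A$ is a one-dimensional complete Noetherian local domain with residue field $k$. Let $\bar A$ be its integral closure in $L = \mathrm{Frac}(A)$.

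\textbf{Step 1: finiteness of $\bar A$.} Complete Noetherian local rings are excellent, hence Japanese (Nagata). Therefore $\bar A$ is a finite $A$-module. Alternatively, one can argue concretely: after a generic linear change of coordinates we may assume $f$ is $y$-general of order $\mu$. By Weierstrass preparation, $A$ is then a free $k[[x]]$-module of rank $\mu$, so $L/k((x))$ is a finite extension. If $\mathrm{char}\,k\nmid$ the relevant degrees, this extension is separable and finiteness follows from the trace argument. In general I would simply cite Nagata's theorem that complete local rings are Nagata rings.

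\textbf{Step 2: $\bar A$ is a complete DVR with residue field $k$.} Since $\bar A$ is finite over the complete local ring $A$, it is a finite product of complete local rings. Because it is a domain, it must be local. It is a one-dimensional normal Noetherian local domain, hence a DVR. Its residue field is finite over $k$, hence equals $k$ since $k$ is algebraically closed. By the Cohen structure theorem, a complete DVR containing a coefficient field $k$ is isomorphic to $k[[t]]$, where $t$ is a uniformizer. Here completeness is used: $\bar A$ is complete in the $\mathfrak m_A$-adic topology, and this topology coincides with its own adic topology because $\mathfrak m_A\bar A$ is primary to the maximal ideal.

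\textbf{Step 3: $\dim_k k[[t]]/A<\infty$.} $\bar A$ is a finitely generated $A$-module of rank one, so there is a nonzero conductor element: write generators $g_i = a_i/b$ with a common denominator $b \in A\setminus\{0\}$, so that $b\bar A\subseteq A$. Since $b\bar A = t^{n}k[[t]]$ with $n = \ord_t b$, this gives $t^n k[[t]] \subseteq A$. Hence $k[[t]]/A$ is a quotient of $k[[t]]/t^nk[[t]]$, which has dimension $n$.

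The main obstacle is Step 1 in positive characteristic, where the field extension may be inseparable and the elementary trace argument fails. The intended route is to invoke excellence of complete local rings, or the citations in Płoski's notes, rather than reprove it. As a fallback, the normalisation could be constructed directly via a Hamburger–Noether parametrisation $x = x(t)$, $y = y(t)$, which exists in any characteristic.
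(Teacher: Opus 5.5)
Your proposal is correct and follows essentially the same route as the paper: finiteness of the normalisation from the Nagata property of complete local rings, then the Cohen structure theorem applied to the resulting one-dimensional complete regular local ring with residue field $k$, then finite-dimensionality of $k[[t]]/A$. The only differences are minor. You get locality of $\bar A$ from the product decomposition of finite algebras over a complete local ring, where the paper cites the henselian statement. You make the last step explicit with a conductor element $t^n k[[t]] \subseteq A$, where the paper says ``finitely generated torsion module''. You also establish finiteness before invoking completeness, which the paper's write-up glosses over.
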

\begin{proof}
	The normalisation $A'$ of $A$ is a local ring by \cite[Lemma 15.107.3, Tag 0BQ0]{stacks-project}. It is of dimension 1 \cite[Lemma 10.112.4, Tag 00OK]{stacks-project}. So $A'$ is a regular complete local ring of dimension 1, containing the field $k$. By the Cohen structure theorem, $A'\cong k[[t]]$.
	
	Since $A$ is a Nagata ring \cite[Lemma 10.162.8, Tag 032W]{stacks-project}, the normalisation $k[[t]]$ is finite as an $A$-module \cite[Lemma 29.54.11, Tag 035S]{stacks-project}. Hence $k[[t]]/A$ is a finitely generated torsion $A$-module, and must have finite $k$-dimension as claimed.
\end{proof}
Hence we can define the following:
\begin{definition}
	Let $C$ be a plane curve branch. We say that $C$ is \emph{smooth} if $C$ is a regular scheme, and \emph{singular} otherwise. 
\end{definition}
Since $C$ is one-dimensional, regularity is equivalent to normality, so $C$ is smooth if and only if $C\cong \spec k[[t]]$ by the above theorem.

We also have the following refinement of Lemma \ref{tangent_direction}:
\begin{proposition}\label{cotangent_line}
	Let $C = \spec A$ be a plane curve branch and $\mathfrak{m}\subseteq A$ the maximal ideal. Consider the associated graded ring
	\[\gr_{\mathfrak{m}}(A) = \bigoplus_{n\ge 0} \mathfrak{m}^n/\mathfrak{m}^{n+1}.\]
	Then
	\begin{enumerate}
		\item If $C$ is smooth, then the cotangent space $\mathfrak{m}/\mathfrak{m}^2$ is 1-dimensional over $k$, and if $t$ is any generator of it then
		\[\gr_{\mathfrak{m}}(A)\cong k[t]\]
		is a polynomial ring.
		\item If $C$ is singular, then the cotangent space $\mathfrak{m}/\mathfrak{m}^2$ is 2-dimensional over $k$, and there is a unique line $L\subseteq \mathfrak{m}/\mathfrak{m}^2$ whose elements are nilpotent in $\gr_{\mathfrak{m}}(A)$.
	\end{enumerate}
\end{proposition}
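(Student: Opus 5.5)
Write $A = k[[x,y]]/(f)$ with $f$ irreducible of order $\mu \ge 1$; after the linear change of coordinates discussed after Lemma \ref{tangent_direction}, assume $f$ is $y$-aligned, so $f_\mu = y^\mu$. The plan is to compute $\mathfrak{m}/\mathfrak{m}^2$ and $\gr_{\mathfrak{m}}(A)$ directly from $f$. The key input is the standard identification
\[\gr_{\mathfrak{m}}(A) \cong k[x,y]/(f_\mu),\]
valid because $A$ is the quotient of $R = k[[x,y]]$ by the principal ideal $(f)$. To justify it, note that $\gr_{\mathfrak{m}}(A) = \gr_{\mathfrak{n}}(R)/\mathrm{in}(f)$, where $\mathfrak{n} = (x,y)$ and $\mathrm{in}(f)$ is the ideal of initial forms of elements of $(f)$. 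Since $k[x,y]$ is a domain, the initial form of $gf$ is $g_{\mathrm{in}} f_\mu$, so $\mathrm{in}((f)) = (f_\mu)$. I regard this as the one step needing care; the rest is bookkeeping.

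\emph{Case $\mu = 1$.} The equation $f = y + (\text{higher order terms})$ lets one solve for $y$ as a power series in $x$ (Weierstrass preparation, or the implicit function theorem for power series). Hence $A \cong k[[x]]$, which is regular, so $C$ is smooth. Then $\mathfrak{m}/\mathfrak{m}^2$ is spanned by $x$, and $\gr_{\mathfrak{m}}(A) \cong k[x,y]/(y) \cong k[x]$. Any generator $t$ of $\mathfrak{m}/\mathfrak{m}^2$ is a nonzero multiple of $x$, so $\gr_{\mathfrak{m}}(A) = k[t]$.

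\emph{Case $\mu \ge 2$.} Here $(f) \subseteq \mathfrak{n}^2$, so $\mathfrak{m}/\mathfrak{m}^2 \cong \mathfrak{n}/\mathfrak{n}^2$ is 2-dimensional. Consequently $A$ is not regular (a regular 1-dimensional ring has 1-dimensional cotangent space), so $C$ is singular. This shows that smooth is equivalent to $\mu = 1$, which makes the two cases of the statement match the two cases $\mu = 1$ and $\mu \ge 2$.

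Now $\gr_{\mathfrak{m}}(A) \cong k[x,y]/(y^\mu)$. In degree 1 an element $ax + by$ is nilpotent iff it lies in the nilradical $\sqrt{(y^\mu)} = (y)$, that is, iff $a = 0$. So the nilpotent elements of $\mathfrak{m}/\mathfrak{m}^2$ are exactly those in the line $L = k\cdot y$, which gives uniqueness. Stated coordinate-free, $L$ is the line spanned by the linear form $l$ of Lemma \ref{tangent_direction}. Such an $L$ is well defined independently of coordinates, since nilpotency in $\gr_{\mathfrak{m}}(A)$ is intrinsic to $A$.
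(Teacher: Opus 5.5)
Your proof is correct and follows the paper's argument. Both rest on the identification $\gr_{\mathfrak{m}}(A)\cong k[x,y]/(f_\mu)$ together with $f_\mu = l^\mu$ from Lemma \ref{tangent_direction}. The differences are minor: you handle the smooth case by the direct computation $\mu=1$, $\gr_{\mathfrak{m}}(A)\cong k[x,y]/(y)\cong k[x]$, rather than citing the regular-local-ring result from Atiyah--Macdonald; and you get uniqueness of $L$ from $\sqrt{(y^\mu)}=(y)$, whereas the paper observes that two distinct nilpotent lines would force $\gr_{\mathfrak{m}}(A)$ to be zero-dimensional.
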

\begin{proof}
	We have a surjection $k[[x,y]]\onto A$ which means $\dim_k \mathfrak{m}/\mathfrak{m}^2\le 2$. Since $A$ is 1-dimensional, the dimension is 1 if $C$ is smooth and 2 otherwise. The statement in the smooth case is \cite[Theorem 11.22(i)]{atiyah2018introduction}.
	
	In the singular case, we have a surjection
	\[k[[x,y]]\onto A\]
	so that $A\cong k[[x,y]]/(f)$ for some power series $f$. This induces a surjection on associated graded rings
	\[k[x,y]\onto \gr_{\mathfrak{m}}A\]
	(here we use that the associated graded ring of $k[[x,y]]$ is $k[x,y]$, by the reference above). The kernel consists of all power series which occur as the initial form of some element of the ideal $(f)$. It's easy to see that these are exactly the multiples of the initial form $f_{in}$, so 
	\[\gr_{\mathfrak{m}}A\cong k[x,y]/(f_{in}).\]
	By Lemma \ref{tangent_direction} we know that $f_{in} = l^\mu$ for some linear form $l$ and integer $\mu\ge 1$. So the image of $l$ in $\mathfrak{m}/\mathfrak{m}^2$ is nilpotent. It is also nonzero since $C$ is singular which means that
	\[(x,y)/(x,y)^2\cong \mathfrak{m}/\mathfrak{m}^2\]
	as $k$-vector spaces, and $l$ is non-zero in the former. This shows that the line $L$ generated by $l$ in the cotangent space consists of nilpotent elements. Uniqueness of this line is clear as otherwise $\gr_{\mathfrak{m}}A\cong k[x,y]/(f_{in})$ would be 0-dimensional, which is not possible.
\end{proof}
We also consider parametrisations of plane curve branches:
\begin{definition}
	Let $C = \spec A$ be a plane curve branch. A \emph{parametrisation} of $C$ is an injective ring map 
	\[A\into k[[t]]\]
	such that $\dim_k k[[t]]/A$ is finite.
\end{definition}
Theorem \ref{branch_normalisation} ensures that parametrisations exist for any plane curve branch. Any two parametrisations realise the normalisation of $A$, and therefore differ by post-composing with an automorphism of $k[[t]]$.

If we write $A = k[[x,y]]/(f)$, then such a map is determined by the images of $x,y$ respectively. If $x\mapsto \varphi(t)$ and $y\mapsto \psi(t)$ then $\varphi(t), \psi(t)$ are power series such that
\[f(\varphi(t), \psi(t)) = 0.\]
So we can think of $t\mapsto (\varphi(t), \psi(t))$ as ``parametrising'' the branch $f = 0$.
\begin{example}\label{example_parametrisation}
	Let $f = x^m - y^n$ where $m,n$ are coprime positive integers and $A = k[[x,y]]/(f)$. Then $A\into k[[t]]$ via
	\[x\mapsto t^n,\qquad y\mapsto t^m\]
	making $A = k[[t^m, t^n]]$. Since $m,n$ are coprime, the image contains $t^N$ for $N$ sufficiently large, so $k[[t]]/A$ is finite-dimensional. This shows that $A$ is integral (i.e. $f$ irreducible and $C = \spec A$ is a branch), and the maps above give a parametrisation of $C$.
\end{example}
We have the following general fact about parametrisations:
\begin{proposition}\label{parametrisation}
	Let $f\in k[[x,y]]$ be a $y$-aligned power series of order $\mu$. Then for any parametrisation
	\[k[[x,y]]/(f)\into k[[t]]\]
	sending $x\mapsto \varphi(t), y\mapsto \psi(t)$, $\varphi$ has order $\mu$ and $\psi$ has order $>\mu$.
\end{proposition}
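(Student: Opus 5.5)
Write $a = \ord_t\varphi$ and $b = \ord_t\psi$. The plan is to show first that $\ord_t g(\varphi,\psi)\ge \min(a,b)\cdot \ord g$ for all $g$, then compare orders of the terms of $f$, and finally use the finiteness of $\dim_k k[[t]]/A$ to pin $a$ down to exactly $\mu$.

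\emph{Preliminaries.} Both $\varphi$ and $\psi$ are non-units of $k[[t]]$: the map $A\into k[[t]]$ is a local homomorphism, because $k[[t]]$ is integral over $A$. They are also nonzero, since the map is injective and $x,y$ are nonzero in $A$ ($f$ is irreducible with initial form $y^\mu$, so $f$ divides neither $x$ nor $y$). Hence $a,b\ge 1$. Every monomial $x^iy^j$ maps to a series of order $ia+jb\ge (i+j)\min(a,b)$, which gives the bound on $\ord_t g(\varphi,\psi)$.

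\emph{Step 1: $b>a$.} Suppose $b\le a$. Expand $0=f(\varphi,\psi)=\psi^\mu+\sum_{i\ge1}a_i(\varphi)\psi^{\mu-i}$. The first term has order $\mu b$. The term $a_i(\varphi)\psi^{\mu-i}$ has order at least $(i+1)a+(\mu-i)b\ge (\mu+1)b>\mu b$, using $\ord a_i\ge i+1$ from the $y$-aligned form and $b\le a$. So $\psi^\mu$ cannot be cancelled, a contradiction. Therefore $b>a$.

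\emph{Step 2: $b\ge a+1$ and $a\ge\mu$.} The term $\psi^\mu$ has order $\mu b$. We have $\ord a_\mu(\varphi)=a\cdot\ord a_\mu$, provided $a_\mu\ne 0$; if $a_\mu=0$ then $y\mid f$, contradicting irreducibility unless $f=y$ up to a unit, which is the case $\mu=1$ handled below. The middle terms have order $\ge (i+1)a+(\mu-i)b$. Since $b>a$, the orders $\ord(a_i(\varphi))+(\mu-i)b$ balance against $\mu b$. Cleaner: let $N=\ord a_\mu>\mu$. From the vanishing of the leading terms we would like $\mu b = Na$ or some middle equality, but that route is messy.

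\emph{Step 2, better route: finiteness.} Use the extra hypothesis that $\dim_k k[[t]]/A$ is finite. Then the image of $A$ contains $t^n$ for all large $n$, so the gcd of the orders of the elements of $A$ is $1$. It remains to show $a=\mu$. Since $a_i$ has no constant term, $f$ is a Weierstrass polynomial in $y$ of degree $\mu$. So $A$ is a free $k[[x]]$-module of rank $\mu$ with basis $1,y,\dots,y^{\mu-1}$, and its fraction field has degree $\mu$ over $k((x))$. On the other hand $k[[t]]$ is finite over $k[[\varphi]]$ of rank $\ord_t\varphi=a$, since $k[[t]]$ is free over $k[[\varphi]]$ with basis $1,\dots,t^{a-1}$ by Weierstrass division. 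Because $A$ and $k[[t]]$ share a fraction field, the two degrees are equal, so $a=\mu$. Combined with Step 1, $b>\mu$.

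\emph{Main obstacle.} The delicate step is the degree comparison. It needs the fraction fields of $A$ and $k[[t]]$ to coincide, which holds because $k[[t]]/A$ is finite-dimensional, so $k[[t]]\subseteq$ the fraction field of $A$ (multiply by $t^n\in A$ for large $n$). It also needs the ranks over $k((x))=k((\varphi))$ to be computed correctly on both sides.

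Finally, the edge case $\mu=1$: then $f=y\cdot$(unit)$+a_1(x)$, and the argument above applies verbatim, giving $a=1$ and $b>1$.
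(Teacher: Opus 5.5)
Your proof is correct, but it establishes the key equality $\ord_t\varphi=\mu$ by a different route from the paper.

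Your Step 1, the leading-term argument giving $\ord_t\psi>\ord_t\varphi$, is exactly the paper's argument. The difference is in how $\ord_t\varphi=\mu$ is obtained:
\begin{itemize}
\item The paper invokes \cite[Theorem 3.14]{plane_algebraic_curves}, the formula $\dim_k k[[x,y]]/(f,g)=\ord_t g(\varphi,\psi)$, with $g=x$. This gives $\ord_t\varphi=\dim_k k[[y]]/(f(0,y))=\mu$ in one line.
\item You prove this special case directly. By Weierstrass division, $A$ is free of rank $\mu$ over $k[[x]]$, and $k[[t]]$ is free of rank $\ord_t\varphi$ over $k[[\varphi]]$. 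Finite-dimensionality of $k[[t]]/A$ forces $\mathrm{Frac}(A)=k((t))$, compatibly with $k((x))\cong k((\varphi))$, so the two ranks must agree.
\end{itemize}

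Your version is self-contained. It also shows exactly where the finiteness condition in the definition of a parametrisation enters: replacing $(\varphi(t),\psi(t))$ by $(\varphi(t^2),\psi(t^2))$ still gives an injection, but with infinite-dimensional cokernel, and it doubles $\ord_t\varphi$. The paper's citation is shorter, and it uses the same general intersection formula that it needs again in Theorem \ref{blowup_intersection}. In both arguments the equality $\ord_t\varphi=\mu$ is independent of Step 1.

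A few small corrections to the write-up:
\begin{itemize}
\item Your preliminary claim that $\psi\neq 0$ is false when $f$ is associate to $y$ (e.g.\ $f=y$), where $\psi=0$. This is harmless: then $\ord_t\psi=\infty$, and Step 1 only uses $\psi\neq0$ under the hypothesis $b\le a<\infty$. The sentence should still be fixed.
\item The abandoned first attempt at Step 2 should be deleted.
\item The remark about the gcd of the orders of elements of $A$ is never used and can be dropped.
\end{itemize}
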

\begin{proof}
	We write
	\[f = y^\mu + a_1(x)y^{\mu-1}+\ldots + a_{\mu-1}(x)y + a_\mu(x)\]
	with $a_i(x)$ of order $\ge i + 1$.
	
	Pick any parametrisation $k[[x,y]]/(f)\into k[[t]]$, sending $x\mapsto \varphi(t), y\mapsto \psi(t)$. Then we have
	\[\psi(t)^\mu + a_1(\varphi(t))\psi(t)^{\mu-1} + \ldots + a_{\mu-1}(\varphi(t))\psi(t) + a_\mu(\varphi(t)) = 0.\]
	Which we can write as
	\[\psi(t)^\mu + \sum_{i+j\ge \mu + 1}c_{ij}\psi(t)^i\varphi(t)^j = 0\]
	for some $c_{ij}\in k$. If $\ord \,\varphi \ge \ord\, \psi$, then the leading term on the left-hand side is the leading term of $\psi^\mu$, which is nonzero, contradiction the equality. So $\ord \,\varphi < \ord\, \psi$.
	
	By \cite[Theorem 3.14]{plane_algebraic_curves}, we have that 
	\[\ord\, \varphi(t) = \dim_k k[[x,y]]/(x,f) = \dim_k k[[y]]/(f(0,y)) = \mu\]
	as claimed.
\end{proof}
The main invariant we're interested in for plane branches is the valuation semigroup, which determines the equisingularity type in this case.
\begin{definition}\label{definition_semigroup}
	Let $C = \spec A$ be a plane curve branch. We have a module-finite extension $A\subseteq k[[t]]$ given by normalisation. Let $v = \ord:k[[t]]\setminus\{0\} \to \mathbb{Z}_{\ge0}$ be the canonical valuation. The \emph{valuation semigroup} of $C$ (or $A$) is the set
	\[S(C) = v(A\setminus A^\times)\subseteq \mathbb{Z}_{>0}.\]
\end{definition}
The justification for the name is that $S(C)$ is a numerical semigroup:
\begin{definition}\label{definition_numerical_semigroup}
	A \emph{numerical semigroup} is a set $S\subseteq \mathbb{Z}_{>0}$ which is closed under addition and such that $\mathbb{Z}_{>0}\setminus S$ is finite.
\end{definition}
If $(a_1, \ldots, a_n)$ are positive integers with $\gcd(a_1, \ldots, a_n) = 1$, then we denote by $\left\langle a_1, \ldots, a_n\right\rangle$ the semigroup generated by these integers, namely
\[\left\langle a_1, \ldots, a_n\right\rangle = \left\{\sum_{i=1}^n c_ia_i\;:\;c_i\in \mathbb{Z}_{>0}\right\}.\]
The gcd condition ensures that this is a numerical semigroup.
\begin{theorem}\label{numerical_semigroup}
	Let $C$ be a plane curve branch. Then $S = S(C)$ is a numerical semigroup. In fact
	\[|\mathbb{Z}_{>0}\setminus S| = \dim_k k[[t]]/A.\]
\end{theorem}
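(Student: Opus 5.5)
Closure under addition is immediate: $v$ is a valuation, so $v(ab)=v(a)+v(b)$, and the product of two non-units of $A$ is again a non-unit. It also helps to record that $0\notin S$. By Theorem \ref{branch_normalisation}, $A\subseteq k[[t]]$ is a finite local extension. So a non-unit $a\in A$ lies in the maximal ideal of $A$, which is contained in $(t)$, and hence $v(a)\ge 1$.

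For the counting statement, filter $k[[t]]$ by the ideals $(t^n)$ and consider $A_n = A\cap (t^n)$. The key claim is that
\[\dim_k A_n/A_{n+1}\in\{0,1\},\]
with value $1$ exactly when $n\in S\cup\{0\}$. The map $A_n\to (t^n)/(t^{n+1})\cong k$ sending $a$ to its $t^n$-coefficient has kernel $A_{n+1}$, which gives the upper bound. The value is $1$ precisely when some $a\in A$ has $v(a)=n$. For $n=0$ this holds because $1\in A$. For $n\ge1$ such an $a$ is automatically a non-unit of $k[[t]]$, hence a non-unit of $A$, so the condition is exactly $n\in S$.

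Next, since $\dim_k k[[t]]/A$ is finite, some $t^N$ lies in the ideal $(t^N)\subseteq k[[t]]$ contained in $A$. I argue this as follows. The conductor $\mathfrak{c}=\{x\in k[[t]] : x\,k[[t]]\subseteq A\}$ is nonzero: $k[[t]]/A$ is a finitely generated torsion $A$-module, killed by some nonzero $a\in A$, and then $a\in\mathfrak{c}$. Being a nonzero ideal of $k[[t]]$, the conductor equals $(t^N)$ for some $N$. In particular $(t^N)\subseteq A$, so $A_N=(t^N)$. Therefore every $n\ge N$ lies in $S$, and $\mathbb{Z}_{>0}\setminus S$ is finite.

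Finally, compute dimensions along the finite filtrations
\[k[[t]]\supseteq (t)\supseteq\cdots\supseteq (t^N) \quad\text{and}\quad A=A_0\supseteq A_1\supseteq\cdots\supseteq A_N=(t^N).\]
The first gives $\dim_k k[[t]]/(t^N)=N$. The second gives $\dim_k A/(t^N)=\#\bigl(\{0\}\cup (S\cap[1,N-1])\bigr)$. Subtracting,
\[\dim_k k[[t]]/A = N - 1 - \#(S\cap[1,N-1]) = \#\bigl([1,N-1]\setminus S\bigr) = |\mathbb{Z}_{>0}\setminus S|.\]
The only step requiring care is producing $N$ with $(t^N)\subseteq A$, handled by the conductor argument above; everything else is bookkeeping.
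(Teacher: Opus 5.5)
Your proof is correct, and it rests on the same idea as the paper's. Each order $n\ge 1$ is either attained by an element of $A$, in which case $n\in S$, or it contributes exactly one dimension to $k[[t]]/A$.

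The paper packages this as the one-line claim that the classes of $t^m$, for $m\in\mathbb{Z}_{>0}\setminus S$, form a $k$-basis of $k[[t]]/A$. It then deduces finiteness of the complement from $\dim_k k[[t]]/A<\infty$. You instead first produce $(t^N)\subseteq A$ via the conductor, and only then count along the finite filtration $A_n=A\cap(t^n)$.

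What your route buys is rigour in the spanning half of the paper's basis claim. Linear independence of the gap monomials is an easy leading-term argument. On its own it already gives $|\mathbb{Z}_{>0}\setminus S|\le\dim_k k[[t]]/A$, hence finiteness, but only an inequality. To show the gap monomials also span, one has to reduce an arbitrary power series modulo $A$ by repeatedly cancelling leading terms. That is an infinite process, and it lands in $A$ only once one knows $(t^N)\subseteq A$, or that $A$ is $t$-adically closed in $k[[t]]$. Your conductor step supplies exactly this, whereas the paper's ``one sees'' leaves it implicit. The paper's version is shorter; yours reduces everything to finite-dimensional linear algebra.

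One trivial slip concerns the case $A=k[[t]]$, where the conductor is $(t^0)$. There your formula $\dim_k A/(t^N)=\#(\{0\}\cup(S\cap[1,N-1]))$ and the claim that every $n\ge N$ lies in $S$ are off by the element $0$. Simply take $N\ge 1$, which is harmless since $(t^{N+1})\subseteq(t^N)$.
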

\begin{proof}
	Let $C = \spec A$ and pick a normalisation $A\subseteq k[[t]]$ and let $v:k[[t]]\setminus\{0\}\to \mathbb{Z}_{\ge 0}$ be the valuation.
	
	It's clear that $S$ is closed under addition since if $v(a) = m$ and $v(b) = n$ then $v(ab) = m+n$.
	
	For the finite complement, one sees that the images of the elements $\{t^m:m\in \mathbb{Z}_{>0}\setminus S\}\subseteq k[[t]]$  in $k[[t]]/A$ form a $k$-basis. From this the claimed formula for $|\mathbb{Z}_{>0}\setminus S|$ follows, and also its finiteness as $\dim_k k[[t]]/A$ is finite by Theorem \ref{branch_normalisation}.
\end{proof}
From this one easily sees that $C$ is smooth if and only if $S(C) = \mathbb{Z}_{>0}$.

\begin{example}\label{example_semigroup}
	Let $A = k[[x,y]]/(x^m-y^n)$ for $m,n$ coprime positive integers and $C = \spec A$. We saw in Example \ref{example_parametrisation} that $x\mapsto t^n, y\mapsto t^m$ is a parametrisation of $A$, and hence $A\cong k[[t^m, t^n]]\subseteq k[[t]]$. So the valuation semigroup of $C$ is 
	\[S(C) = \left\langle m,n\right\rangle.\]
\end{example}

We also have some related invariants:
\begin{definition}
	Let $C$ be a plane curve branch. Its \emph{multiplicity} is $\mu(C) = \min S(C)$ the smallest element of its valuation semigroup. Its \emph{$\delta$-invariant} is $\delta(C) = |\mathbb{Z}_{>0}\setminus S(C)|$ the number of missing values in the valuation semigroup.
\end{definition}
These are in fact the same as multiplicity and the $\delta$-invariant defined in the usual way:
\begin{theorem}\label{welldef_delta_mult}
	Let $C = \spec A$ be a plane curve branch. Let $\mathfrak{m}\subseteq A$ be the maximal ideal of $A$.
	\begin{enumerate}
		\item We have that $\mu(C) = \dim_k \mathfrak{m}^n/\mathfrak{m}^{n+1}$ for all sufficiently large $n$. Also if $A = k[[x,y]]/(f)$ for some power series $f\in k[[x,y]]$, then $\mu(C)$ is the order of $f$.
		\item We have that $\delta(C) = \dim_k k[[t]]/A$ where we regard $A\subseteq k[[t]]$ as a subring by normalisation.
	\end{enumerate}
\end{theorem}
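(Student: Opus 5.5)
Part (2) follows at once from Theorem \ref{numerical_semigroup}: by definition $\delta(C) = |\mathbb{Z}_{>0}\setminus S(C)|$, and that theorem gives $|\mathbb{Z}_{>0}\setminus S(C)| = \dim_k k[[t]]/A$. So all the work is in part (1), which I split into two claims: (a) $\min S(C) = \ord f$, and (b) $\dim_k \mathfrak{m}^n/\mathfrak{m}^{n+1} = \ord f$ for all large $n$.

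For (a), I apply a linear change of coordinates so that $f$ is $y$-aligned of order $\mu = \ord f$, using Lemma \ref{tangent_direction}. This changes neither $A$ up to isomorphism nor $\ord f$. Take a parametrisation $x\mapsto\varphi(t)$, $y\mapsto\psi(t)$. By Proposition \ref{parametrisation}, $v(x)=\ord\varphi=\mu$ and $v(y)=\ord\psi>\mu$. Every element $g$ of $\mathfrak{m}$ is the image of some $g(x,y)$ without constant term, so $g(\varphi,\psi)$ is a sum of monomials $\varphi^i\psi^j$ with $i+j\ge1$. Each such monomial has order $\ge\mu$, hence $v(g)\ge\mu$. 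Since $v(x)=\mu$ is attained, $\min S(C)=\mu$.

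For (b), I use the computation from the proof of Proposition \ref{cotangent_line}, which shows $\gr_{\mathfrak{m}}A\cong k[x,y]/(f_{in})$ with $f_{in}=f_\mu$ homogeneous of degree $\mu$. That argument does not use singularity; in the smooth case $\mu=1$ and it is consistent with part (1) of that proposition. I then take the exact sequence of graded pieces in degree $n\ge\mu$:
\[0\to k[x,y]_{n-\mu}\xrightarrow{\cdot f_\mu} k[x,y]_n\to (\gr_{\mathfrak{m}}A)_n\to0.\]
Multiplication by $f_\mu$ is injective because $k[x,y]$ is a domain. This gives $\dim_k\mathfrak{m}^n/\mathfrak{m}^{n+1}=(n+1)-(n-\mu+1)=\mu$ for all $n\ge\mu-1$.

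The main point needing care is the identification of the kernel of $k[x,y]\to\gr_{\mathfrak{m}}A$ with $(f_\mu)$. This requires checking that the initial form of $gf$ is $g_{in}f_{in}$, and that elements of $(f)+(x,y)^{n+1}$ lying in $(x,y)^n$ have degree-$n$ part divisible by $f_\mu$. Both follow because initial forms are multiplicative in the domain $k[[x,y]]$. Since the paper already invoked this identification, I will cite it rather than reprove it.
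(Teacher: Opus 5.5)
Your proof is correct, and for part (1) it takes a different route from the paper. Part (2) is handled exactly as in the paper, via Theorem \ref{numerical_semigroup}.

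\textbf{The paper's route for (1).} The paper works inside $k[[t]]$. It identifies $\mathfrak{m}^n$ with the set of elements of $A$ of $t$-order at least $n\mu$. It then counts the orders $n\mu,\dots,n\mu+\mu-1$ to get $\dim_k \mathfrak{m}^n/\mathfrak{m}^{n+1}=\mu(C)$. Finally it deduces $\mu(C)=\ord f$ by citing \cite{flenner1993multiplicities}: the Hilbert--Samuel multiplicity equals the order of $f$.

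\textbf{Your route for (1).} You tie both quantities to $\ord f$ using only results already in the paper. You get $\min S(C)=\ord f$ from Proposition \ref{parametrisation} after a linear change making $f$ $y$-aligned. You get the Hilbert function from the tangent cone $\gr_{\mathfrak{m}}A\cong k[x,y]/(f_{in})$. This removes the external citation and gives the explicit range $n\ge \mu-1$.

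\textbf{What this buys.} Your route is more than tidier: the paper's identification of $\mathfrak{m}^n$ fails in general for $n\ge 2$. Take $A=k[[x,y]]/(y^2-x^5)=k[[t^2,t^5]]$. The element $x^{n-2}y=t^{2n+1}$ has $t$-order at least $2n$. However, its image in $\gr_{\mathfrak{m}}A=k[x,y]/(y^2)$ is nonzero in degree $n-1$, so it does not lie in $\mathfrak{m}^n$. For $n=2$ this is simply $y=t^5\notin\mathfrak{m}^2$.

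In this example the actual basis of $\mathfrak{m}^n/\mathfrak{m}^{n+1}$ is $t^{2n},t^{2n+3}$, not $t^{2n},t^{2n+1}$. The final count $\mu$ is still right, but the paper's argument for it needs repair. Your graded-ring computation does not depend on that identification.
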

\begin{remark}
	Part (1) of the above theorem says that the multiplicity defined in terms of the valuation semigroup is equal to the multiplicity defined in terms of the Hilbert series, or as the order of vanishing of an equation defining the curve branch in the plane. Part (2) says that the $\delta$-invariant defined in terms of the valuation semigroup is equal to the $\delta$-invariant in the usual sense.
\end{remark}
\begin{proof}
	(1): Let $A\subseteq k[[t]]$ by normalisation, and let $\mu = \mu(C)$. For any $n$, we see that
	\[\mathfrak{m}^n = \left\{g(t) = \sum_{i\ge n\mu}^\infty a_it^i\;:\;f(t)\in A\right\},\]
	i.e. $\mathfrak{m}^n$ contains all power series of order $\ge n\mu$ in $A$. For sufficiently large $n$, $A$ will contain power series $g_{n\mu}, g_{n\mu+1},\ldots, g_{n\mu + \mu -1}$ such that $g_i$ has order $i$. It's easy to see then that these $g_i$ form a $k$-basis of $\mathfrak{m}^n/\mathfrak{m}^{n+1}$, giving the first claim.
	
	The second statement follows from the first one and \cite[Theorem 3.2(1)]{flenner1993multiplicities}.
	
	(2): This is clear from Theorem \ref{numerical_semigroup}.
\end{proof}
\subsection{Reducible germs}
In this subsection we look at reducible plane curve germs, and define their intersection numbers and tangency.
\begin{definition}\label{definition_intersection}
	Let $C = \spec A$ be a plane curve germ with branches $B_1, \ldots, B_n$ corresponding to minimal primes $\mathfrak{p}_1, \ldots, \mathfrak{p}_n$ of $A$. The \emph{local intersection number} of branches $B_i\not=B_j$ is given by
	\[i_C(B_i, B_j) = \dim_k A/(\mathfrak{p}_i + \mathfrak{p}_j).\]
\end{definition}
Note that the local intersection number is finite and always a positive integer. Indeed $\mathfrak{p}_i$ and $\mathfrak{p}_j$ are different height 0 primes, so their sum has height $\ge 1$. Since $A$ is 1-dimensional this means that $\mathfrak{p}_i + \mathfrak{p}_j$ is $\mathfrak{m}$-primary for $\mathfrak{m}$ the maximal ideal, so the above dimension is finite. It is positive since $\mathfrak{p}_i+\mathfrak{p}_j\subseteq \mathfrak{m}$.

If we write $A = k[[x,y]]/(f)$, then the branches $B_i, B_j$ correspond to distinct irreducible factors $f_i, f_j$ of $f$, and
\[i_C(B_i, B_j) = \dim_k k[[x,y]]/(f_i,f_j).\]

The following proposition states that in some sense, the intersection number of two branches can be computed in any suitable plane curve germ containing the given branches:
\begin{proposition}\label{intersection_independence}
	Let $C_1\into C_2$ be a closed embedding of plane curve germs, and suppose that $B,B'$ are distinct branches of $C_1$. Then we may also regard $B,B'$ as branches of $C_2$ via the above embedding, and we have
	\[i_{C_1}(B,B') = i_{C_2}(B,B').\]
	In particular if $C$ is a plane curve germ and $B\not=B'$ are branches of $C$, then
	\[i_C(B,B') = i_{C_{red}}(B,B').\]
\end{proposition}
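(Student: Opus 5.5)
The plan is to reduce everything to the explicit description of the intersection number via power series in $k[[x,y]]$. Write $C_2 = \spec A_2$ with $A_2 = k[[x,y]]/(f)$ and $C_1 = \spec A_1$. The closed embedding $C_1\into C_2$ gives a surjection $A_2\onto A_1$ with kernel $I$, so $A_1 = A_2/I$. Composing with $k[[x,y]]\onto A_2$ gives a surjection $k[[x,y]]\onto A_1$ whose kernel $J$ contains $(f)$. I first want to know that $J$ is principal, say $J=(g)$ with $g\mid f$. The point is that $A_1$ is one-dimensional by assumption, being a plane curve germ. Its kernel in $k[[x,y]]$ is therefore a height-one ideal, and it must also be the kernel of some presentation $k[[x,y]]\onto A_1$ of the required form. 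To get this cleanly, I would argue as follows. Since $A_1\cong k[[u,v]]/(h)$ abstractly, the ideal $J$ has no embedded or zero-dimensional components. Equivalently, $A_1$ is Cohen–Macaulay of dimension $1$, i.e. $\mathfrak{m}$ is not an associated prime. Then $J$ is unmixed of height one in the UFD $k[[x,y]]$, and so $J$ is principal.

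Next I identify the branches. The minimal primes of $A_1$ correspond to the irreducible factors of $g$. Since $g\mid f$, these are among the irreducible factors of $f$, which index the minimal primes of $A_2$. Concretely, a minimal prime $\mathfrak{p}$ of $A_1$ pulls back to a prime of $A_2$ containing $I$. This prime has height $0$: it is the image of $(f_i)$ for an irreducible factor $f_i$ of $g$, and hence of $f$. This makes precise the statement that $B,B'$ are branches of $C_2$. Moreover $A_1/\mathfrak{p}$ and $A_2/\mathfrak{p}'$ are the same ring $k[[x,y]]/(f_i)$.

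Finally I compute both intersection numbers. By the formula after Definition \ref{definition_intersection}, for $B,B'$ corresponding to distinct irreducible factors $f_i,f_j$ we have
\[A_\ell/(\mathfrak{p}_i+\mathfrak{p}_j)\cong k[[x,y]]/(f_i,f_j)\]
for both $\ell=1,2$. This holds because the preimages of $\mathfrak{p}_i+\mathfrak{p}_j$ in $k[[x,y]]$ are $(f_i)+(f_j)$ in both cases. The point to note is that this ideal contains $(f)$ and $(g)$, so the quotient is unaffected. Therefore $i_{C_1}(B,B') = \dim_k k[[x,y]]/(f_i,f_j) = i_{C_2}(B,B')$.

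The special case follows by taking $C_1 = C_{red}\into C_2 = C$, whose branches agree as noted after the definition of germs.

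The only delicate step is the principality of $J$. I could sidestep it, since principality is not really needed. The preimage of a minimal prime of $A_1$ is a prime of $k[[x,y]]$ containing $f$ and minimal over $J\supseteq (f)$. Because $\dim A_1=1$, this prime has height one, so it is a minimal prime over $(f)$, i.e. it equals $(f_i)$. The rest of the argument then goes through without knowing $J$ is principal. I would present this simpler dimension-count version.
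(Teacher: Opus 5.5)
Your proposal is correct and is essentially the paper's argument: pull everything back along a surjection $k[[x,y]]\onto A_2$, identify $B,B'$ with distinct irreducible factors $f_i,f_j$ of the equation of $C_2$, and compute both intersection numbers as $\dim_k k[[x,y]]/(f_i,f_j)$. The only difference is that the paper writes $A_1=k[[x,y]]/(g_1)$ with $g_1\mid g_2$ without comment, tacitly using that the induced kernel is principal. You instead either justify this (Cohen--Macaulay, hence unmixed of height one in a UFD) or avoid it via the dimension count on pulled-back minimal primes, which is a small gain in rigour.
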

\begin{proof}
	Let $C_i = \spec A_i$ for $i=1,2$ so the embedding corresponds to a surjection $A_2\onto A_1$. Pick a surjection $k[[x,y]]\onto A_2$, which also induces a surjection $k[[x,y]]\onto A_1$ via the embedding. This allows us to write
	\[A_1 = k[[x,y]]/(g_1),\qquad A_2 = k[[x,y]]/(g_2)\]
	for power series $g_1,g_2\in k[[x,y]]$ with $g_1\mid g_2$. The branches $B_1, B_2$ correspond to irreducible factors $f,f'$ of $g_1$ (and hence are also irreducible factors of $g_2$) and we have
	\[i_{C_1}(B,B') = \dim_k k[[x,y]]/(f,f') = i_{C_2}(B,B')\]
	which proves the claim.
\end{proof}
We also define tangency of branches. To do so we first need to have a way to compare the `tangent direction' to two distinct branches:
\begin{definition}
	Let $C = \spec A$ be a plane curve germ  and $\mathfrak{m}\subseteq A$ be the maximal ideal. Also suppose $A$ has at least two distinct branches (so that the cotangent space $\mathfrak{m}/\mathfrak{m}^2$ is 2-dimensional as $A$ cannot be a regular local ring). If $B = \spec A'$ is a branch of $C$ and $\mathfrak{m'}\subseteq A'$ is the maximal ideal, define the \emph{cotangent line} $L_B\subseteq \mathfrak{m}/\mathfrak{m}^2$ as follows:
	\begin{enumerate}
		\item If $B$ is smooth, $L_B$ is the kernel of the projection
		\[\mathfrak{m}/\mathfrak{m}^2\onto \mathfrak{m'}/\mathfrak{m'}^2.\]
		\item If $B$ is singular, $L_B$ is the preimage under the projection
		\[\mathfrak{m}/\mathfrak{m}^2\onto \mathfrak{m'}/\mathfrak{m'}^2\]
		of the unique line $L\subseteq \mathfrak{m'}/\mathfrak{m'}^2$ consisting of nilpotent elements in the associated graded ring $\gr_{\mathfrak{m}'}A'$ (which exists by Proposition \ref{cotangent_line}).
	\end{enumerate}
\end{definition}
Note this is well-defined: if $B$ is smooth then $\mathfrak{m'}/\mathfrak{m'}^2$ is 1-dimensional and $L_B$ is a line, and if $B$ is singular then the projection is an isomorphism (both cotangent spaces are 2-dimensional) so $L_B$ is again a line.

We define two branches to be tangent if their cotangent line is the same:
\begin{definition}\label{definition_tangency}
	Let $C = \spec A$ be a plane curve germ with at least two branches, and let $B,B'$ be branches of $C$. We say $B$ and $B'$ are \emph{tangent} if $L_B = L_{B'}$. If $C$ has a unique branch, we consider that branch tangent to itself.
	
	It's clear that tangency is an equivalence relation on branches of any plane curve germ $C$, and we call an equivalence class under tangency a \emph{tangency class}.
\end{definition}
Also define the following:
\begin{definition}\label{scheme_of_branches}
	Let $C = \spec A$ be a plane curve germ, and $T$ a tangency class of branches of $C$. We let $C_T$ be the reduced closed subscheme supported on the generic points of the branches in $T$ along with the closed point of $C$.
\end{definition}
Note that this is well-defined (i.e. the set consisting of the generic points of branches in $T$ and the closed point of $C$ is closed in $C$, so there is a unique reduced closed subscheme of $C$ with this set as its set of points). Moreover $C_T$ is a plane curve germ: if $A = k[[x,y]]/(f)$ and the branches in $T$ correspond to irreducible factors $h_1, \ldots, h_r$ of $f$, then we see easily that $C_T = \spec k[[x,y]]/(h_1h_2\ldots h_r)$.

We can characterise tangency in the following simpler way in terms of power series as well. Namely two branches given by power series are tangent if and only if their initial forms are powers of the same linear form.
\begin{proposition}\label{tangency_initial_form}
	Let $C = \spec A$ be a plane curve germ and write $A = k[[x,y]]/(f)$ for a power series $f$. Suppose $h\not=h'$ are distinct irreducible factors of $f$, corresponding to branches $B\not=B'$ of $C$. Then $B$ and $B'$ are tangent if and only if there is a linear form $l$ such that the initial forms of both $h$ and $h'$ are powers of $l$.
\end{proposition}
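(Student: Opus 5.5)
The plan is to compute the cotangent line $L_B$ explicitly in terms of $h$, and show that under the identification $\mathfrak{m}/\mathfrak{m}^2\cong (x,y)/(x,y)^2$ it is the line spanned by $l_h$. Here $l_h$ denotes the linear form whose power is the initial form of $h$; it exists by Lemma \ref{tangent_direction}, since $h$ is irreducible. Both sides of the claimed equivalence are then about the same linear forms, and the statement follows.

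First fix the setup. Since $C$ has at least two branches, $f$ has order $\ge 2$. Hence $(f)\subseteq (x,y)^2$, and the surjection $k[[x,y]]\onto A$ induces an isomorphism $(x,y)/(x,y)^2\cong \mathfrak{m}/\mathfrak{m}^2$. Moreover $B=\spec A'$ with $A'=k[[x,y]]/(h)$, and the projection $\mathfrak{m}/\mathfrak{m}^2\onto \mathfrak{m}'/\mathfrak{m}'^2$ is identified with $(x,y)/(x,y)^2\onto (x,y)/((x,y)^2+(h))$.

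Next split into cases according to whether $B$ is smooth.
\begin{enumerate}
\item If $B$ is smooth, then by Theorem \ref{welldef_delta_mult} (or directly) $h$ has order $1$, so its initial form is the linear form $l_h$. The kernel of the projection is exactly the span of the image of $h$ modulo $(x,y)^2$, which is the span of $l_h$. Hence $L_B=k\cdot l_h$.
\item If $B$ is singular, then $h$ has order $\mu\ge 2$ and the projection is an isomorphism. The proof of Proposition \ref{cotangent_line} already shows that $\gr_{\mathfrak{m}'}A'\cong k[x,y]/(l_h^\mu)$, and that the unique nilpotent line is spanned by $l_h$. Transporting this back gives $L_B=k\cdot l_h$ again.
\end{enumerate}

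With $L_B=k\cdot l_h$ and $L_{B'}=k\cdot l_{h'}$ in hand, finish as follows. The branches $B,B'$ are tangent iff $L_B=L_{B'}$, iff $l_h$ and $l_{h'}$ are proportional. This holds iff there is a single linear form $l$ (rescaling a power of $l_{h'}$ by a nonzero scalar is harmless) whose powers are the initial forms of both $h$ and $h'$, up to the scalars implicit in Lemma \ref{tangent_direction}. For the converse direction, if the initial forms are $l^a$ and $l^b$, then $l_h$ and $l_{h'}$ are both proportional to $l$ by unique factorisation in $k[x,y]$.

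I expect the only real subtlety to be the bookkeeping of identifications in the singular case. One must check that the nilpotent line in $\mathfrak{m}'/\mathfrak{m}'^2$ corresponds to $l_h$ under the composite isomorphism from $(x,y)/(x,y)^2$, and not merely under some abstract isomorphism; this is where the explicit description $\gr_{\mathfrak{m}'}A'\cong k[x,y]/(h_{in})$ is used. The remaining steps are routine.
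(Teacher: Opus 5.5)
Your proof is correct and follows the same route as the paper: identify $\mathfrak{m}/\mathfrak{m}^2$ with $(x,y)/(x,y)^2$, observe that $L_B$ is the line spanned by the linear form whose power is the initial form of $h$, and do the same for $B'$. The paper simply asserts this description of $L_B$, whereas you verify it separately in the smooth and singular cases, and you also note the harmless scalar ambiguity in the phrase ``powers of the same $l$''.
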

\begin{proof}
	We have as before that the cotangent space of $C$ is isomorphic to $(x,y)/(x,y)^2$. Under this isomorphism, the cotangent line of the branch $B$ corresponding to $h$ is exactly the line generated by $l$, where the initial form is a power of $l$. A similar statement holds for the branch $B'$, which implies the claim.
\end{proof}

We have the following property of the intersection pairing as it relates to tangency:
\begin{proposition}\label{intersection_tangency}
	Let $C$ be a plane curve germ, and $B\not= B'$ two distinct branches of $C$. Then
	\[i_C(B, B')\ge \mu(B)\mu(B')\]
	and the inequality is strict if and only if $B$ and $B'$ are tangent.
\end{proposition}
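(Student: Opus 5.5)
By Proposition \ref{intersection_independence} we may compute $i_C(B,B')$ in $k[[x,y]]$. Write $B = \spec k[[x,y]]/(h)$ and $B' = \spec k[[x,y]]/(h')$ with $h,h'$ irreducible, of orders $\mu=\mu(B)$ and $\mu'=\mu(B')$ (Theorem \ref{welldef_delta_mult}(1)). The tool I would use is the parametrisation formula for intersection multiplicity (as in Płoski \cite{plane_algebraic_curves}): if $k[[x,y]]/(h)\into k[[t]]$ is a parametrisation $x\mapsto\varphi(t)$, $y\mapsto \psi(t)$, then
\[\dim_k k[[x,y]]/(h,h') = \ord_t\, h'(\varphi(t),\psi(t)).\]
This is the same kind of statement as \cite[Theorem 3.14]{plane_algebraic_curves}, which was already used in the proof of Proposition \ref{parametrisation}.

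First I would choose coordinates. By Lemma \ref{tangent_direction} the initial form of $h$ is $l^\mu$ and that of $h'$ is $l'^{\mu'}$. After a linear change of coordinates I can take $l = y$, so that $h$ is $y$-aligned. Proposition \ref{parametrisation} then gives $\ord\varphi=\mu$ and $\ord\psi>\mu$. So we have $\varphi = a t^\mu+\ldots$ with $a\neq 0$, and $\psi\in (t^{\mu+1})$.

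Next I would expand $h' = h'_{\mu'} + h'_{\mu'+1}+\ldots$ and substitute the parametrisation:
\begin{itemize}
\item Each monomial $x^iy^j$ with $i+j=d$ becomes a series of order $\ge \mu d$. Hence every term $h'_d(\varphi,\psi)$ with $d>\mu'$ has order $>\mu\mu'$.
\item For the leading form, $h'_{\mu'}(\varphi,\psi) = (\alpha\varphi+\beta\psi)^{\mu'}$ where $l'=\alpha x+\beta y$. Its coefficient of $t^{\mu\mu'}$ is $(\alpha a)^{\mu'}$.
\end{itemize}
It follows that $\ord_t h'(\varphi,\psi)\ge \mu\mu'$ always, with equality if and only if $\alpha\ne0$.

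Finally, $\alpha\neq 0$ holds exactly when $l'$ is not proportional to $y=l$. By Proposition \ref{tangency_initial_form}, that is exactly the condition that $B$ and $B'$ are not tangent. This gives both the inequality and the characterisation of when it is strict. I would also check the tangent case directly: there $l'=\beta y$, so $(\beta\psi)^{\mu'}$ has order $>\mu\mu'$, and all other terms have order $>\mu\mu'$, so the inequality is strict.

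The main obstacle is justifying the parametrisation formula for $\dim_k k[[x,y]]/(h,h')$ in arbitrary characteristic. If the paper does not want to cite it directly from Płoski, I would prove it as follows:
\begin{itemize}
\item The ring $k[[x,y]]/(h,h')$ equals $A/h'A$ with $A=k[[x,y]]/(h)$.
\item Since $k[[t]]/A$ is finite-dimensional, $\dim_k A/gA=\dim_k k[[t]]/gk[[t]]=\ord_t g$ for any nonzero $g\in A$. This follows by comparing lengths in the exact sequences for multiplication by $g$ on $A$ and on $k[[t]]$: the cokernel $k[[t]]/A$ has the same finite length at both ends of the comparison, so the lengths of $A/gA$ and $k[[t]]/gk[[t]]$ agree.
\end{itemize}
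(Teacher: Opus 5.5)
Your proof is correct, but it takes a different route from the paper. The paper gives no argument of its own. It cites \cite[Proposition 3.8(v)]{plane_algebraic_curves}, where intersection multiplicity is defined through parametrisations. It then invokes \cite[Theorem 3.14]{plane_algebraic_curves} to identify that quantity with $\dim_k k[[x,y]]/(h,h')$.

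You carry out the computation directly. You put $h$ in $y$-aligned form and use Proposition \ref{parametrisation} to get $\ord\varphi=\mu$ and $\ord\psi>\mu$. You then read off that the $t^{\mu\mu'}$-coefficient of $h'(\varphi,\psi)$ is $(\alpha a)^{\mu'}$. This vanishes exactly when $l'$ is proportional to $y$, which by Proposition \ref{tangency_initial_form} is tangency.

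Your justification of the parametrisation formula is also sound; it could be stated more crisply. Put $\bar A=k[[t]]$ and apply the snake lemma to multiplication by $g\neq 0$ on $0\to A\to\bar A\to\bar A/A\to 0$. This gives an exact sequence $0\to K\to A/gA\to\bar A/g\bar A\to Q\to 0$. Here $K$ and $Q$ are the kernel and cokernel of $g$ acting on the finite-dimensional space $\bar A/A$, so $\dim_k K=\dim_k Q$. You also need $h'\neq 0$ in $A$, which holds because $h,h'$ are non-associate irreducibles.

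The paper's citation buys brevity. Your route buys a self-contained argument that shows why the excess over $\mu\mu'$ is controlled by the tangent lines; it mirrors the order computation the paper itself does in Theorem \ref{blowup_intersection}. Your lemma $\dim_k A/gA=\ord_t g$ also helps elsewhere. Applied to $g=x$, it gives $\ord\varphi=\dim_k k[[y]]/(h(0,y))=\mu$. It can therefore replace the appeal to \cite[Theorem 3.14]{plane_algebraic_curves} in Proposition \ref{parametrisation} too, making this part of the argument independent of that reference.
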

\begin{proof}
	This follows from \cite[Proposition 3.8(v)]{plane_algebraic_curves}. Note that in loc.cit. the intersection pairing is defined differently, but agrees with ours by Theorem 3.14 therein.
\end{proof}
Combining this with Proposition \ref{intersection_independence}, we also obtain that tangency of two given branches can be determined in any appropriate germ: if $C_1\into C_2$ is a closed embedding of plane curve germs, and $B\not=B'$ are branches of $C_1$, then they are tangent in $C_1$ if and only if they are tangent in $C_2$.
\subsection{Blowups of plane curve germs}
In this section we consider blowups of plane curve germs and how the invariants defined so far change under them.

The following theorem gives the general description of blowups of plane curve germs:
\begin{theorem}\label{blowup_germ}
	Let $C$ be a plane curve germ and consider the blowup $\widetilde{C}\to C$ in the closed point. Then
	\begin{enumerate}
		\item If $C$ is a plane curve branch, so is $\widetilde{C}$. More precisely if $C = \spec k[[x,y]]/(f)$ for some $y$-aligned $f$ of order $\mu$, then $\widetilde{C} = \spec k[[x,y_1]]/(f_1)$ where $f_1\in k[[x,y_1]]$ is the unique power series satisfying $f(x, xy_1) = x^\mu f_1(x,y_1)$. Moreover if $k[[x,y]]/(f)\into k[[t]], x\mapsto \varphi(t), y\mapsto \psi(t)$ is any parametrisation of $C$, then there is a commutative diagram
		\[\begin{tikzcd}
			{k[[x,y]]/(f)} && \\
			&& {k[[t]]} \\
			{k[[x,y_1]]/(f_1)}
			\arrow["\begin{array}{c} \substack{x\mapsto \varphi(t)\\ y \mapsto \psi(t)} \end{array}", from=1-1, to=2-3]
			\arrow["\begin{array}{c} \substack{x\mapsto x\\y\mapsto xy_1} \end{array}"', from=1-1, to=3-1]
			\arrow["\begin{array}{c} \substack{x\mapsto \varphi(t)\\y_1\mapsto \psi(t)/\varphi(t)} \end{array}"', from=3-1, to=2-3]
		\end{tikzcd}\]
		and the bottom map is a parametrisation of $\widetilde{C}$.
		\item If $C$ has a unique tangency class $T$ of branches, then $\widetilde{C}$ is a plane curve germ whose branches are exactly the strict transforms $\widetilde{B}$ of branches $B$ of $C$. More precisely if $C = \spec k[[x,y]]/(f)$ and all irreducible factors of $f$ are $y$-aligned and $f$ has order $\mu$, then
		\[\widetilde{C} = \spec k[[x,y_1]]/(f_1)\]
		where $f_1\in k[[x,y_1]]$ is the unique power series satisfying $f(x, xy_1) = x^\mu f_1(x,y_1)$.
		\item For general $C$ the blowup $\widetilde{C}$ is a disjoint union of schemes
		\[\widetilde{C} = \coprod_{T\in \mathcal{T}} \widetilde{C}_T\]
		indexed by the set $\mathcal{T}$ of tangency classes of branches of $C$. For each $T\in\mathcal{T}$, the reduced subscheme $\widetilde{C}_{T,red}$ is a plane curve germ isomorphic to the blowup of $C_T$ (here $C_T$ is the closed subscheme of $C$ containing the branches in $T$ as in Definition \ref{scheme_of_branches}). If $B$ is a branch of $C$ and $B\in T$, then the strict transform $\widetilde{B}$ is a branch of $\widetilde{C}_{T,red}$.
	\end{enumerate}
\end{theorem}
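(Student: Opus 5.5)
We prove the three parts in the order (2) special case (1), then (2), then (3). Throughout, the blowup of $C = \spec k[[x,y]]/(f)$ in the closed point is the closed subscheme of the blowup of $\spec k[[x,y]]$ at $(x,y)$ cut out by the total transform of $f$. The latter is covered by two charts: $\spec k[[x,y]][y_1]/(y-xy_1)$ and the symmetric one with $x = yx_1$. On the first chart the total transform factors as $f(x,xy_1) = x^\mu f_1(x,y_1)$, where $\mu$ is the order of $f$. The blowup $\widetilde{C}$ is the closed subscheme cut out by $f_1$ in the first chart, and similarly in the second. Since $\widetilde{C}$ lies over $C$, it is finite over $C$ and hence semilocal.

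\emph{Part (2).} Suppose every irreducible factor $h$ of $f$ is $y$-aligned, so its initial form is a power of $y$. Then the initial form of $f$ is $c\,y^\mu$, and after rescaling it is $y^\mu$. On the chart $y = xy_1$ we get $f_1(0,y_1) = y_1^\mu$. In the other chart the exceptional line is $y = 0$, and the initial form restricted there is $x_1^\mu \cdot 1$ with $x_1 = 0$ being the only point. Writing $f_1$ in that chart, its restriction to the exceptional line is $1 \neq 0$ at the point $x_1 = 0$, so the second chart contributes no point that is not already in the first. Hence the exceptional fibre of $\widetilde{C}$ is the single point $x = y_1 = 0$. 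Because $\widetilde{C}\to C$ is finite and $C$ is complete local, $\widetilde{C}$ is the spectrum of a complete local ring. This ring is a quotient of the completion of $k[[x]][y_1]$ at $(x,y_1)$, which is $k[[x,y_1]]$, so $\widetilde{C} = \spec k[[x,y_1]]/(f_1)$. A closed point argument via Weierstrass preparation makes this precise: $f_1(0,y_1) = y_1^\mu$, so $f_1$ is $y_1$-regular of order $\mu$, and $k[[x]][y_1]/(f_1)$ localised and completed agrees with $k[[x,y_1]]/(f_1)$. Multiplicativity of the construction gives $f_1 = \prod (h_j)_1^{a_j}$, where each $(h_j)_1$ is the strict transform of $h_j$. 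This identifies the branches, once we know each $(h_j)_1$ is irreducible, which we establish in part (1).

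\emph{Part (1).} Take a parametrisation $\varphi,\psi$. By Proposition \ref{parametrisation}, $\ord\,\varphi = \mu < \ord\,\psi$, so $\psi/\varphi \in t\,k[[t]]$. Thus $x\mapsto\varphi$, $y_1\mapsto\psi/\varphi$ defines a map $k[[x,y_1]]\to k[[t]]$, and it kills $f_1$ because $\varphi^\mu f_1(\varphi,\psi/\varphi) = f(\varphi,\psi) = 0$ and $k[[t]]$ is a domain. The diagram visibly commutes. The image contains the image of $A$, so it has finite codimension in $k[[t]]$ and hence is one-dimensional. It is therefore $k[[x,y_1]]/\mathfrak p$ for a height-one prime $\mathfrak p \ni f_1$. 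To conclude that the bottom map is injective, i.e.\ that $f_1$ generates a prime ideal, we need to see that $f_1$ is irreducible and reduced.

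This is the main obstacle. We would argue via intersection with $x$:
\[\dim_k k[[x,y_1]]/(x,f_1) = \ord_{y_1} f_1(0,y_1) = \mu = \ord\,\varphi = \dim_k k[[t]]/(\varphi).\]
By \cite[Theorem 3.14]{plane_algebraic_curves}, the right-hand side equals the intersection number of $x$ with the branch $\spec k[[x,y_1]]/\mathfrak p$. If $f_1 = g\,h$ with $\mathfrak p = (g)$ and $h$ a non-unit, then additivity of intersection multiplicity gives $\mu = i(x,g) + i(x,h) > i(x,g) = \mu$. The strict inequality holds because $h$ is a non-unit at the origin, so $i(x,h) \geq 1$. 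This is a contradiction, so $(f_1)$ is prime. Applying this to each $h_j$ completes part (2).

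\emph{Part (3).} Group the irreducible factors of $f$ by tangency class using Proposition \ref{tangency_initial_form}, writing $f = \prod_T F_T$, where each $F_T$ has initial form a power of a line $l_T$. The reduced exceptional fibre of $\widetilde{C}$ is cut out by the initial form $\prod_T l_T^{\mu_T}$ on $\mathbb{P}^1$, so its points correspond bijectively to the classes $T$. Since $\widetilde{C}$ is finite over the complete local ring, it is the disjoint union of its localisations at these points, and we let $\widetilde{C}_T$ be the piece over $l_T$. Change coordinates so that $l_T = y$. Near that point every $F_{T'}$ with $T'\neq T$ has $x$-nonzero initial form, so its transform is a unit (namely $x^{\mu_{T'}}$ times a unit in the total transform), while $F_T$ transforms to $(F_T)_1$. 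Hence $\widetilde{C}_T = \spec k[[x,y_1]]/((F_T)_1)$, possibly non-reduced if $f$ was. Its reduction is the transform of $(F_T)_{red}$, which by part (2) is the blowup of $C_T$, with branches the strict transforms of the branches in $T$.
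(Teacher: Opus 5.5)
Your argument is correct, but it takes a different route from the paper's in two places.

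\emph{What the paper does.} It works intrinsically with $\widetilde C=\proj\bigoplus_n\mathfrak m^n$. It reads off the points of the exceptional fibre from $\proj \gr_{\mathfrak m}A\cong\proj k[x,y]/(f_{in})$. It identifies $\proj R$ with $\spec R_{(x)}$ via $y\in\sqrt{(x)}$, and builds mutually inverse maps between $R_{(x)}$ and $k[[x,y_1]]/(f_1)$, with finiteness from Zariski's Main Theorem and hence completeness. It then quotes \cite[Lemma 1.4]{plane_algebraic_curves} for irreducibility of $f_1$. Part (3) follows by noting that the strict transform of $C_T$ is a reduced germ with the same points as $\widetilde C_T$.

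\emph{What you do differently.} You view $\widetilde C$ as the strict transform of $C$ in the blown-up plane and compute in charts; the completeness step (local and finite over $A$) is the same as the paper's. The genuinely new piece is your proof of irreducibility. The parametrisation $(\varphi,\psi/\varphi)$ cuts out an irreducible factor $g$ of $f_1$, and $i(x,g)=\ord_t\varphi=\mu=\dim_k k[[x,y_1]]/(x,f_1)$ forces $f_1/g$ to be a unit. This replaces the external citation with an argument using only Proposition \ref{parametrisation} and the $\ord_t$ formula for intersection numbers that the paper already uses. It also proves injectivity of the bottom map in the diagram, which the paper leaves implicit. Your local computation in (3), where the transforms of $F_{T'}$ for $T'\neq T$ are $x^{\mu_{T'}}$ times units near the point, gives the stronger statement $\widetilde C_T\cong\spec k[[x,y_1]]/((F_T)_1)$. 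That is the refinement mentioned in the remark after the theorem.

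\emph{Things to tidy.}
\begin{enumerate}
\item In your first sentence you mean the strict transform, not the total transform. You should also note that $x$ is a nonzerodivisor modulo $f_1$ (since $V(x,f_1)$ is finite in the regular two-dimensional chart), so the strict transform in the chart is exactly $V(f_1)$.
\item Finiteness of $\widetilde C\to C$ comes from being projective and quasi-finite. Quasi-finiteness is only available after your fibre computation, so finiteness should not be asserted before it.
\item The chart ring is $k[[x,y]][y_1]/(y-xy_1)$, not $k[[x]][y_1]$; its completion at $(x,y_1)$ is still $k[[x,y_1]]$.
\item Identifying the branches in (2) also needs the $(h_j)_1$ to be pairwise non-associate. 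Your parametrisation argument supplies this: if $(h_j)_1\mid(h_i)_1$, then $h_i(\varphi_j,\psi_j)=0$, so $h_j\mid h_i$.
\end{enumerate}
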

\begin{proof}
	By definition the blowup of $C = \spec A$ in its closed point is
	\[\widetilde{C} = \proj\left(\bigoplus_{n\ge 0} \mathfrak{m}^n\right)\]
	where $\mathfrak{m}\subseteq A$ is the maximal ideal. This scheme contains the generic points of each branch $B$ of $C$ (or equivalently of their strict transform $\widetilde{B}$) as these are outside of the closed point. The rest of the points lie on the exceptional divisor, given by tensoring with $k = A/\mathfrak{m}$:
	\[E = \proj\left(\bigoplus_{n\ge 0}\mathfrak{m}^n/\mathfrak{m}^{n+1}\right) = \proj \gr_{\mathfrak{m}}A.\]
	Write $A = k[[x,y]]/(f)$. By the same argument as in the proof of Proposition \ref{cotangent_line}, we have that
	\[\gr_{\mathfrak{m}}A\cong k[x,y]/(f_{in}).\]
	Where $f_{in}$ is the initial form of $f$. Taking $\proj$, we obtain that $E$ is a scheme with finitely many closed points corresponding to maximal homogeneous ideals in $k[x,y]/(f_{in})$ not containing both $x$ and $y$, which are easily seen to correspond to linear forms dividing $f_{in}$. These in turn correspond to tangency classes of branches of $C$ by Proposition \ref{tangency_initial_form}. If $T$ is a tangency class of branches of $C$, let $p_T\in E$ be the corresponding closed point (given by the vanishing of the linear form $l$ such that every branch in the tangency class has initial form a power of $l$). It's easy to see that if $B$ is a branch in the class $T$, then the strict transform $\widetilde{B}$ goes through the point $p_T$ and no other closed point.
	
	Hence $\widetilde{C}$ contains a closed point $p_T$ for each tangency class $T$, and for each branch $B$ of $C$ the generic point of the strict transform $\widetilde{B}$, each of which specialises to a unique closed point $p_T$. So $\widetilde{C}$ is the disjoint union of the local schemes $\widetilde{C}_T$, each one containing the unique closed point $p_T$ and the generic points of branches $\widetilde{B}$ with $B\in T$. This shows the decomposition claim in (3), and in the setups of (1) and (2) it follows that $\widetilde{C}$ is irreducible.
	
	Now suppose that $C$ contains a unique tangency class (this encompasses the situations in both (1) and (2)). We can write $A = k[[x,y]]/(f)$ so that every irreducible factor of $f$ is $y$-aligned (this is the case as soon as one irreducible factor is $y$-aligned by Proposition \ref{cotangent_line}). Then we can write $f$ as
	\[f = y^\mu + a_1(x)y^{\mu-1}+\ldots + a_{\mu-1}(x)y + a_\mu(x)\]
	with $a_i(x)$ of order $\ge i + 1$. If we substitute $y = xy_1$, every term becomes divisible by $x^\mu$, so there is indeed a unique power series $f_1(x,y_1)\in k[[x,y_1]]$ such that
	\[f(x, xy_1) = x^\mu f_1(x, y_1).\]
	The form for $f$ also implies that, in the Rees algebra
	\[R = \bigoplus_{n\ge 0}\mathfrak{m}^n,\]
	we have $y\in \sqrt{(x)}$ (considering both $x,y$ as degree 1 elements), meaning that the radical of the ideal generated by $x$ is the irrelevant ideal. This means that every prime ideal containing $x$ contains the irrelevant ideal, and hence
	\[\widetilde{C} = \proj R \cong \spec R_{(x)}\]
	where $R_{(x)}$ is the ring of degree 0 elements in the localisation $R_x$. By \cite[Lemma 8.1.2(e)]{liu} we have
	\[R_{(x)}\cong \frac{A[T]/(xT-y)}{\text{$x^\infty$-torsion}}.\]
	Using this, one can show that in fact $R_{(x)}\cong k[[x,y_1]]/(f_1)$: we do this by constructing maps in both directions.
	\begin{itemize}
		\item There is an obvious map $A = k[[x,y]]/(f)\to k[[x,y_1]]/(f_1)$ by sending $x\mapsto x, y\mapsto xy_1$. The map is well-defined as it sends $f$ to $f(x,xy_1) = x^\mu f_1$. This obviously extends to a map $A[T]/(xT-y)\to k[[x,y_1]]/(f_1)$ by sending $T\mapsto y_1$. It's easy to see that the right-hand ring has no $x$-torsion, so the map descends to a morphism
		\[R_{(x)} = \frac{A[T]/(xT-y)}{\text{$x^\infty$-torsion}}\to k[[x,y_1]]/(f_1)\]
		\item In the other direction, there is an obvious map $k[x,y_1]\to R_{(x)}$ via $x\mapsto x, y_1\mapsto T$. Because the blowup $\widetilde{C}\to C$ is projective and quasi-finite (i.e. has finite fibres, shown above), it is in fact finite by Zariski's Main Theorem \cite[Corollary 4.4.7]{liu}, so $R_{(x)}$ is finite over $A$. We also know that $R_{(x)}$ is a local ring (we showed above that $\widetilde{C}$ has a unique closed point in our case), therefore $R_{(x)}$ is a complete local ring (as it's finite over a complete local ring, see \cite[Exercise 4.3.17]{liu}). The maximal ideal is in fact generated by $x$ and $T$, so the above morphism extends to a map
		\[k[[x,y_1]]\to R_{(x)}.\]
		This map sends $f_1$ to 0 since it sends $x^\mu f_1 = f(x, xy_1)$ to $f = 0$ in $R_{(x)}$ and $R_{(x)}$ has no $x$-torsion, and hence descends to a map
		\[k[[x,y_1]]\to R_{(x)}.\] 
	\end{itemize}
	It's easy to check that the above maps are inverse to each other, and we obtain the desired isomorphism. We've hence shown that
	\[\widetilde{C}\cong \spec k[[x,y_1]]/(f_1).\]
	This shows that $\widetilde{C}$ is a plane curve germ of the desired form in (2), and in case (1), if $f$ is irreducible then so is $f_1$ by \cite[Lemma 1.4]{plane_algebraic_curves} so $\widetilde{C}$ is a plane curve branch of the desired form.
	
	For the parametrisation claim in (1), it's easy to see that the given maps are well-defined and they commute ($\psi(t)/\varphi(t)$ is a power series in $t$ because $\psi$ has higher order by Proposition \ref{parametrisation}). The image of $k[[x,y_1]]/(f_1)$ in $k[[t]]$ contains the image of $k[[x,y]]/(f)$, so the quotient is finite-dimensional and therefore gives a parametrisation of $k[[x,y_1]]/(f_1)$.
	
	For (3), we've already shown the desired decomposition of $\widetilde{C}$. We have that $C_T$ is reduced and all its branches are tangent: then the strict transform of $C_T$ in $\widetilde{C}$ is a reduced plane curve germ. This strict transform must contain the closed point $p_T$ and the generic point of each branch in $T$, hence it must in fact be $\widetilde{C}_{T, red}$ (the strict transform is a reduced closed subscheme containing exactly the points of $\widetilde{C}_T$). The statement on strict transforms was also shown above.
\end{proof}
\begin{remark}
	One can in fact prove a more refined version of (3): namely each $\widetilde{C}_T$ is itself a plane curve germ, by giving each branch the same `multiplicity' as in $C$. We shall not need this in what follows.
\end{remark}

We can now discuss how the valuation semigroup changes under blowups. To do this, we need to introduce the notion of an Apéry set of a numerical semigroup:
\begin{definition}
	Let $S$ be a numerical semigroup and $m\ge 1$ an integer. The \emph{$m$-Apéry set} of $S$ is the ordered set of integers $0\le a_0<a_1<\ldots<a_{m-1}$ such that
	\[S \cup \{0\} = \bigcup_{i=0}^{m-1}\left(a_i + m\mathbb{Z}_{\ge 0}\right).\]
\end{definition}
This means that each $a_i$ is the smallest element of $S\cup\{0\}$ in its residue class mod $m$, and we list the $a_i$ in increasing order. Note that we get $m$ numbers in this way, because $S$ contains all but finitely many positive integers.

Using this we can characterise the semigroup of a plane curve branch after a blowup. This is originally due to Apéry \cite{Apery}.
\begin{theorem}\label{blowup_semigroup}
	Let $C$ be a plane curve branch of multiplicity $\mu = \mu(C)$, and let $\pi: \widetilde{C}\to C$ be the blowup of $C$ in its closed point. Then $\widetilde{C}$ is also a plane curve branch, and if $S(\widetilde{C})$ has $\mu$-Apéry sequence $0\le a_0<a_1<\ldots<a_{\mu-1}$, then $S(C)$ has $\mu$-Apéry sequence $0\le a_0<a_1 + \mu <\ldots < a_i + i\mu <\ldots <a_{\mu-1}+(\mu-1)\mu$.
\end{theorem}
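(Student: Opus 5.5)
The plan is to work entirely inside the normalisation $k[[t]]$ and to compare $A$ and its blowup $A_1$ as free modules over the subring $k[[x]]$. The tool is a filtration by $y_1$-degree, which carries the $\mu$-Apéry set of one semigroup to that of the other.

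\textbf{Setup.} By Theorem \ref{blowup_germ}(1), $\widetilde{C}$ is a plane curve branch. We may write $C=\spec A$ with $A=k[[x,y]]/(f)$ and $f$ $y$-aligned of order $\mu$. Fix a parametrisation $x\mapsto\varphi(t)$, $y\mapsto\psi(t)$. Then $\widetilde C=\spec A_1$ with
\[A\subseteq A_1=A[\psi/\varphi]\subseteq k[[t]],\]
and $A_1$ is generated by $x$ and $y_1=\psi/\varphi$. By Proposition \ref{parametrisation}, $v(x)=\mu$, and $\mu(\widetilde C)\le\mu$.

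\textbf{Reduction to an Apéry basis.} First I record a general fact. Let $M\subseteq k[[t]]$ be a finitely generated $k[[x]]$-module of rank $\mu$ with $v(M\setminus 0)\cup\{0\}$ containing $0$. Then $v(M\setminus0)$ is a union of $\mu$ arithmetic progressions $a+\mu\mathbb{Z}_{\ge0}$, one for each residue class. Elements $h_0,\dots,h_{\mu-1}$ realising the minimal value in each class form a $k[[x]]$-basis, and every element $\sum c_ih_i$ has valuation $\min_i v(c_ih_i)$, because the $v(h_i)$ are pairwise distinct mod $\mu$.

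Next, since $f$ is $y$-aligned, $f_1=y_1^\mu+(a_1/x)y_1^{\mu-1}+\dots+a_\mu/x^\mu$ is a monic polynomial in $y_1$ over $k[[x]]$. Dividing by it shows that $A_1$ is spanned over $k[[x]]$ by $1,y_1,\dots,y_1^{\mu-1}$, so it has rank $\mu$. Similarly, Weierstrass division by $f$ shows that $A$ is spanned by $1,y,\dots,y^{\mu-1}$, that is, by $x^iy_1^i$.

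\textbf{Filtration.} Let $F_i\subseteq A_1$ be the $k[[x]]$-span of $1,y_1,\dots,y_1^i$, so that
\[A=\sum_{i=0}^{\mu-1}x^iF_i'\]
for the appropriate pieces. I will show that in fact
\[A=\bigoplus_i k[[x]]\,x^i g_i,\]
where $g_i\in F_i$ is monic of degree $i$ in $y_1$ and has valuation $a_i$, the $i$-th smallest element of the $\mu$-Apéry set of $S(\widetilde C)\cup\{0\}$. Given this, the values $v(x^ig_i)=a_i+i\mu$ lie in distinct residue classes. By the general fact above, $\{x^ig_i\}$ is then an Apéry basis of $A$, and its values are exactly $a_i+i\mu$, which are strictly increasing. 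This is the claimed $\mu$-Apéry sequence of $S(C)$.

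\textbf{Main obstacle.} The hard step is constructing the $g_i$. Each $F_i$ is free of rank $i+1$, and its value set meets exactly $i+1$ residue classes. Passing from $F_{i-1}$ to $F_i$ adds exactly one new class; call its minimal value $b_i$. The point is to prove two things:
\begin{enumerate}
\item the new minimal values are increasing, $b_0<b_1<\dots<b_{\mu-1}$, so that $b_i=a_i$;
\item no cancellation in $\sum x^i F_i$ produces a value smaller than $b_i+i\mu$ in the class of $b_i$.
\end{enumerate}
The second point is automatic once the first holds. The tail of $F_i$ beyond $F_{i-1}$ contributes at least $b_i+i\mu$, while the lower-degree pieces carry extra powers of $x$ only in earlier classes.

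For the first point, my first attempt is the following argument. Any element of $F_i$ in a new residue class has the form $y_1g$ with $g\in F_{i-1}$, plus lower terms. By induction, its value is at least $v(y_1)+b_{i-1}\ge b_{i-1}$, with the case of equality being forced into an old class by minimality. If this inequality argument is too weak, I would fall back on comparing dimensions. By Theorem \ref{numerical_semigroup}, $\dim_k A_1/A$ is determined by the two Apéry sets, and one computes
\[\dim_k A_1/A=\sum_i i=\binom{\mu}{2}\]
directly from the basis description. Together with the inequality $v(x^ig_i)\ge a_{\sigma(i)}+i\mu$ for some permutation $\sigma$, this forces $\sigma=\mathrm{id}$.
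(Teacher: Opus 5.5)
Your framework is sound, and if completed it gives a self-contained proof of a result the paper only cites (Płoski, Theorem 4.1). The ingredients are right: $A_1=\bigoplus_{j<\mu}k[[x]]y_1^j$ and $A=\sum_i x^iF_i$. A basis of elements $g_j$, monic in $y_1$, with values in pairwise distinct residue classes would indeed give $A=\bigoplus_j k[[x]]x^jg_j$.

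\textbf{The gap.} The step you flag as the main obstacle is not proved, and it is the whole content of the theorem. Your first attempt says every element of $F_i$ with value in the new class has value $\ge v(y_1)+b_{i-1}$ ``by induction''. But writing such an element as $y_1g+(\text{lower terms})$ gives no such bound. For $i\ge 2$, $g\in F_{i-1}$ of degree $i-1$ can have $v(g)=0$ (e.g.\ $g=1+y_1^{i-1}$), and the lower terms can have small value.

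You also use two further facts without proof: that the minima of the old classes do not drop when passing from $F_{i-1}$ to $F_i$, and that the new class is realised by an element monic in $y_1$. Both are needed for ``$b_i=a_i$'' and for $F_i=k[[x]]g_i\oplus F_{i-1}$, and neither is automatic. A priori $v(y_1^i)$ could land in an old class below that class's minimum in $F_{i-1}$.

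The fallback cannot rescue this. If $\{v(x^ig_i)\}=\{a_{\sigma(i)}+i\mu\}$, then
$\dim_kA_1/A=\frac1\mu\sum_i\bigl(v(x^ig_i)-a_{\sigma(i)}\bigr)=\binom{\mu}{2}$ for every permutation $\sigma$. So the dimension count says nothing about $\sigma$.

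\textbf{The fix.} Build $g_i$ from the specific element $y_1g_{i-1}$, not from an arbitrary element of $F_i$. Assume inductively that
\begin{itemize}
\item $F_{i-1}=\bigoplus_{j<i}k[[x]]g_j$ with $g_j$ monic of degree $j$;
\item $b_0<\dots<b_{i-1}$ are pairwise incongruent mod $\mu$;
\item $v(F_{i-1}\setminus0)=\bigcup_{j<i}(b_j+\mu\mathbb{Z}_{\ge0})$.
\end{itemize}
Then $y_1g_{i-1}$ is monic of degree $i$ with value $v(y_1)+b_{i-1}>b_{i-1}\ge b_j$ for all $j<i$, because $v(y_1)\ge 1$ by Proposition \ref{parametrisation}.

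If this value equals $b_j+k\mu$ (necessarily $k\ge1$), subtract a suitable $\lambda x^kg_j$. The result is still monic, and its value strictly increases, so it still exceeds every old minimum. The process must stop. Otherwise, since $F_{i-1}$ is closed, we would get $y_1g_{i-1}\in F_{i-1}$, contradicting freeness of $1,\dots,y_1^{i}$ over $k[[x]]$.

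It ends with a monic $g_i$ with $b_i=v(g_i)>b_{i-1}$ lying in a new class. Then $F_i=k[[x]]g_i\oplus F_{i-1}$. Since the values lie in distinct classes, $v(F_i\setminus0)=(b_i+\mu\mathbb{Z}_{\ge0})\cup v(F_{i-1}\setminus0)$. This shows at once that old minima are stable, that the new class is realised monically, and that the $b_i$ increase. At $i=\mu-1$ it gives $b_i=a_i$, and the rest of your argument goes through unchanged.
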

\begin{proof}
	It follows from Theorem \ref{blowup_germ} that $\widetilde{C}$ is a plane curve branch. The statement about the Apéry sets is \cite[Theorem 4.1]{poski2014plane}.
\end{proof}
\begin{remark}\label{semigroup_recovery}
	This theorem means that $S(C)$ determines $S(\widetilde{C})$ but the converse is more subtle. In fact $S(\widetilde{C})$ doesn't determine $S(C)$: one can show using the theorem above that the germs
	\[C_1 = \spec k[[t]]\qquad C_2 = \spec k[[x,y]]/(y^2-x^3)\qquad C_3 = \spec k[[x,y]]/(y^3-x^4)\]
	all have smooth blowups, but have semigroups $S_1 = \mathbb{Z}_{>0}, S_2 = \left\langle 2,3\right\rangle, S_3 = \left\langle 3,4\right\rangle$ respectively (see Example \ref{example_semigroup}), which are all different.
	
	However knowing $S(\widetilde{C})$ and $\mu(C)$ allows us to recover $S(C)$, by the above theorem. Indeed one sees that $S_1, S_2, S_3$ above all have different multiplicities.
\end{remark}
\begin{remark}\label{exceptional_semigroup}
	Theorem \ref{blowup_semigroup} also allows us to show that not every numerical semigroup is the valuation semigroup of a plane curve branch. Indeed, let
	\[S = \left\langle 3,4,5\right\rangle = \{3,4,5,6,\ldots\}\]
	be the set of integers $\ge 3$. This is a numerical semigroup with $\mu = 3$ and 3-Apéry set $0<4<5$. If we assume $S = S(C)$ for a plane curve branch $C$, then $S(\widetilde{C})$ would have 3-Apéry set $0\le a_0<a_1<a_2$ satisfying
	\[a_0 = 0, a_1 + 3 = 4, a_2 + 6 = 5\]
	which is clearly not possible. Hence $S$ above is not the valuation semigroup of a plane curve branch.
	
	This shows that not every numerical semigroup arises as the valuation semigroup of a plane curve branch. It is natural to ask whether one can characterise numerical semigroups which do arise in this way. If $k$ has characteristic 0, such a characterisation is possible, see \cite[Appendix, Proposition 3.2.1]{zariski2006moduli} if $k = \mathbb{C}$ and \cite{semigroup_branches} in general. It is unknown to the author whether there is a similar characterisation in positive characteristic.
\end{remark}
We now determine the intersection numbers of branches after a blowup:
\begin{theorem}\label{blowup_intersection}
	Let $C$ be a plane curve germ and $B, B'$ branches of $C$. Consider the blowup $\widetilde{C}\to C$ at the closed point. Then
	\begin{enumerate}
		\item The strict transforms $\widetilde{B}, \widetilde{B'}$ meet at a closed point on the exceptional divisor if and only if they are tangent.
		\item If $B,B'$ are tangent and $\widetilde{B}, \widetilde{B'}$ meet at a point $z$, then they are branches of the plane curve germ $\widetilde{C}_z = (\spec \mathcal{O}_{\widetilde{C}, z})_{red}$ with closed point $z$, and
		\[i_{\widetilde{C}_z}(\widetilde{B}, \widetilde{B'}) = i_C(B, B')-\mu(B)\mu(B').\]
	\end{enumerate}
\end{theorem}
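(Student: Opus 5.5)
Part (1) follows from the description of the blowup in Theorem \ref{blowup_germ}(3). The closed points of the exceptional divisor correspond bijectively to tangency classes of branches of $C$. The strict transform $\widetilde{B}$ of a branch $B$ in class $T$ passes through $p_T$ and through no other closed point. Hence $\widetilde{B}$ and $\widetilde{B'}$ share a closed point if and only if they lie in the same tangency class, i.e. are tangent.

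For (2), I first reduce to a germ with a single tangency class. By Proposition \ref{intersection_independence}, $i_C(B,B')$ can be computed in the reduced germ $C' = \spec k[[x,y]]/(hh')$, where $h,h'$ are the irreducible factors defining $B,B'$. Since $B,B'$ are tangent, $C'$ has a unique tangency class. After a linear change of coordinates, Proposition \ref{tangency_initial_form} lets us assume both $h$ and $h'$ are $y$-aligned, of orders $\mu=\mu(B)$ and $\mu'=\mu(B')$; here Theorem \ref{welldef_delta_mult}(1) identifies these orders with the multiplicities. The germ $\widetilde{C}_z$ contains $\widetilde{B},\widetilde{B'}$ as branches, and by Theorem \ref{blowup_germ}(3) its strict-transform part near $z$ is the blowup of $C_T\supseteq C'$. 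Proposition \ref{intersection_independence} then lets us compute $i_{\widetilde{C}_z}(\widetilde{B},\widetilde{B'})$ inside the blowup of $C'$. By Theorem \ref{blowup_germ}(1),(2), that blowup is $\spec k[[x,y_1]]/(h_1h_1')$, where $h(x,xy_1)=x^{\mu}h_1$ and $h'(x,xy_1)=x^{\mu'}h_1'$. So it suffices to show
\[\dim_k k[[x,y_1]]/(h_1,h_1') = \dim_k k[[x,y]]/(h,h') - \mu\mu'.\]

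To prove this identity I will use parametrisations. The standard fact, which is the Płoski result already invoked in Proposition \ref{intersection_tangency} (\cite[Theorem 3.14]{plane_algebraic_curves}), is that $\dim_k k[[x,y]]/(h,h') = \ord_t h'(\varphi(t),\psi(t))$ for any parametrisation $(\varphi,\psi)$ of $B$. By Theorem \ref{blowup_germ}(1), $(\varphi,\psi/\varphi)$ parametrises $\widetilde{B}$, and by Proposition \ref{parametrisation}, $\ord\varphi=\mu$. Then
\[h'(\varphi,\psi) = h'(\varphi,\varphi\cdot(\psi/\varphi)) = \varphi^{\mu'}\,h_1'(\varphi,\psi/\varphi).\]
Taking $\ord_t$ gives $i(B,B') = \mu\mu' + i(\widetilde{B},\widetilde{B'})$, which is the claim.

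The main obstacle I expect is bookkeeping rather than computation. One must check that $\widetilde{C}_z$ really embeds the blowup of $C'$ as a closed subgerm, so that Proposition \ref{intersection_independence} applies. This should follow because $\widetilde{C}_{T,red}$ is a plane curve germ containing both strict transforms as branches, so the intersection number can be read off from any plane germ containing them. One must also note that $h_1,h_1'$ remain distinct irreducibles, which holds because $\widetilde{B}\neq\widetilde{B'}$ (their generic points differ).
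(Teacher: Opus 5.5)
Your proposal is correct and follows essentially the same route as the paper: part (1) is read off from Theorem \ref{blowup_germ}(3), and part (2) reduces to a single tangency class with $y$-aligned factors, then compares $\ord_t h'(\varphi,\psi)$ with $\ord_t h_1'(\varphi,\psi/\varphi)$ using P\l{}oski's Theorem 3.14, the factorisation $h'(\varphi,\psi)=\varphi^{\mu'}h_1'(\varphi,\psi/\varphi)$, and $\ord_t\varphi=\mu$. The only difference is cosmetic: you cut down to the two-branch germ $\spec k[[x,y]]/(hh')$ and apply Proposition \ref{intersection_independence} before and after the blowup, whereas the paper works directly with the germ $C_T$ of the whole tangency class.
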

\begin{remark}
	Note that $B,B'$ being tangent implies that $i_C(B,B')>\mu(B)\mu(B')$ by Proposition \ref{intersection_tangency}, so the intersection number above is positive.
\end{remark}
\begin{proof}
	(1) follows from Theorem \ref{blowup_germ}(3).
	
	For (2), we may assume that $C$ only has one tangency class again by Theorem \ref{blowup_germ}(3). Write $C = \spec k[[x,y]]/(f)$ where each irreducible factor of $f$ is $y$-aligned. Let $h,h'$ be the irreducible factors of $f$ corresponding to the branches $B, B'$ respectively.
	
	By Theorem \ref{blowup_germ}(2) we have that
	\[\widetilde{C}_z = \spec k[[x,y_1]]/(f_1)\]
	is a plane curve germ, and the strict transforms $\widetilde{B}, \widetilde{B'}$ are branches of it. Here $f(x, xy_1) = x^{\mu_f} f_1$, $\mu_f$ being the order of $f$. We can in fact write
	\[\widetilde{B} = \spec k[[x,y_1]]/(h_1),\qquad \widetilde{B'}=\spec k[[x,y_1]]/(h_1')\]
	where $h(x,xy_1)=x^{\mu}h_1$ and $h'(x, xy_1) = x^{\mu'}h_1'$, with $\mu = \mu(B)$ and $\mu' = \mu(B')$ being the orders of $h,h'$ respectively. It's clear that $h_1, h_1'$ are irreducible factors of $f_1$.
	
	Pick a parametrisation $k[[x,y]]/(h)\into k[[t]], x\mapsto \varphi(t), y\mapsto \psi(t)$. We have the commutative diagram of parametrisations
	\[\begin{tikzcd}
		{k[[x,y]]/(h)} && \\
		&& {k[[t]]} \\
		{k[[x,y_1]]/(h_1)}
		\arrow["\begin{array}{c} \substack{x\mapsto \varphi(t)\\ y \mapsto \psi(t)} \end{array}", from=1-1, to=2-3]
		\arrow["\begin{array}{c} \substack{x\mapsto x\\y\mapsto xy_1} \end{array}"', from=1-1, to=3-1]
		\arrow["\begin{array}{c} \substack{x\mapsto \varphi(t)\\y_1\mapsto \psi(t)/\varphi(t)} \end{array}"', from=3-1, to=2-3]
	\end{tikzcd}\]
	and we have, by \cite[Theorem 3.14]{plane_algebraic_curves} that
	\begin{align*}
		i_C(B, B') &= \ord_t\, h'(\varphi(t), \psi(t))\\
		i_{\widetilde{C}_z}(\widetilde{B}, \widetilde{B'}) &= \ord_t\, h_1'(\varphi(t), \psi(t)/\varphi(t)).
	\end{align*}
By definition of $h_1'$ we have that 
\[x^{\mu'}h_1'(x, y_1) = h_1(x, xy_1)\]
which implies that
\[h'(\varphi(t), \psi(t)) = \varphi(t)^{\mu'}h_1'(\varphi(t), \psi(t)/\varphi(t)),\]
and taking orders in $t$ gives that
\begin{align*}
	i_C(B, B') &= \mu\mu' + i_{\widetilde{C}_z}(\widetilde{B}, \widetilde{B'})\\
	&=\mu(B)\mu(B') + i_{\widetilde{C}_z}(\widetilde{B}, \widetilde{B'})
\end{align*}
where we used that $\ord_t\,\varphi(t) = \mu$ by Proposition \ref{parametrisation}.
\end{proof}
\section{Global curves}\label{section_global_curves}
Let $k$ be an algebraically closed field. In this section we look at projective curves over $k$ which are locally planar, and show how certain quantities associated to them can be recovered in terms of their local germs. We do this for the multiplicity (Theorem \ref{local_global_mult}), $\delta$-invariant (Theorem \ref{local_global_delta}) and intersection numbers (Theorem \ref{local_global_intersection}). We will also give a classification of singularities with small $\delta$-invariant and singular curves of small genus (Proposition \ref{prop_small_genus_singularities}, Theorem \ref{singularity_classification}).

For us a \emph{projective curve} over $k$ is just a connected projective $k$-scheme $Y\to \spec k$ which is pure of dimension 1. In particular it can be reducible and non-reduced.
\begin{definition}
	Let $Y$ be a projective curve over $k$. We say $Y$ is \emph{locally planar} if for any closed point $p\in Y$, the completed local ring $\widehat{\mathcal{O}}_{Y,p}$ is a quotient of $k[[x,y]]$. If $Y$ is locally planar and $p\in Y$ is a closed point, the \emph{germ of $Y$ at $p$} is
	\[Y_p = \spec \widehat{\mathcal{O}}_{Y,p}\]
	which is a plane curve germ. A \emph{branch of $Y$ through $p$} is a branch of $Y_p$.
\end{definition}
Our motivation for studying these is the fact that if $\mathcal{O}_K$ is a discrete valuation ring with residue field $k$, then the special fibre of an arithmetic surface over $\mathcal{O}_K$ is a locally planar projective curve over $k$ (Proposition \ref{special_fibre_planar}).
\subsection{Relating global and local invariants}
If $Y$ is a locally planar projective curve, and $p\in Y$ a closed point, then by the previous section we can define the multiplicity and $\delta$-invariant of each branch of $Y$ through $p$, and the intersection number of distinct branches. In this subsection we will show how to recover the multiplicity and $\delta$-invariant of $Y$ at its points $p$, and the local intersection numbers between the components of $Y$.

We start with the multiplicity. Let $Y$ be a locally planar projective curve over $k$. If $p\in Y$ is a closed point, we can define the multiplicity $\mu_p(Y)$ of $Y$ at $p$, it is given as follows:
\begin{enumerate}
	\item If $A = \mathcal{O}_{Y,p}$ with maximal ideal $\mathfrak{m}$, then $\mu_p(Y) = \dim_k \mathfrak{m}^n/\mathfrak{m}^{n+1}$ for all sufficiently large $n$ (this is the definition in terms of Hilbert series). If $\widehat{A}$ is the completion of $A$ with maximal ideal $\widehat{\mathfrak{m}}$ then $\mathfrak{m}^n/\mathfrak{m}^{n+1}\cong \widehat{\mathfrak{m}}^n/\widehat{\mathfrak{m}}^{n+1}$ so we can replace the local ring by its completion.
	\item If $Y$ has an open subset isomorphic to $\spec k[x,y]/(f)$ for some $f\in k[x,y]$, and $p$ corresponds to the origin, then $\mu_p(Y)$ is the order of vanishing of $f$ at the origin.
\end{enumerate}
The equivalence of these two definitions follows from \cite[Theorem 3.2(1)]{flenner1993multiplicities}.

We can recover this (for a reduced curve) from the multiplicities of the branches through $p$:
\begin{theorem}\label{local_global_mult}
	Let $Y$ be a reduced locally planar projective curve over $k$ and $p\in Y$ a closed point. Let $B_1, \ldots, B_n$ be the branches of $Y_p$. Then 
	\[\mu_p(Y) = \sum_{i=1}^n \mu(B_i).\]
\end{theorem}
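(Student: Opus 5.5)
The plan is to pass to the completed local ring and reduce everything to the order of a power series. By definition (1) of $\mu_p(Y)$, we may replace $\mathcal{O}_{Y,p}$ by its completion $\widehat{\mathcal{O}}_{Y,p}$. Since $Y$ is locally planar, the completion is $k[[x,y]]/(f)$ for some $f\in k[[x,y]]$.

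We also need $Y_p$ to be reduced. Since $Y$ is reduced and of finite type over the perfect field $k$, the local ring $\mathcal{O}_{Y,p}$ is excellent and reduced, so its completion is reduced as well (analytic unramifiedness of reduced excellent local rings). Consequently $f=f_1f_2\cdots f_n$ with pairwise non-associate irreducible factors $f_i$, and the branches are $B_i=\spec k[[x,y]]/(f_i)$.

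Next I compute $\mu_p(Y)$ in terms of $f$. The argument in the proof of Proposition \ref{cotangent_line} applies verbatim to an arbitrary germ: writing $\widehat{\mathfrak m}$ for the maximal ideal of the completion, we get
\[\gr_{\widehat{\mathfrak m}}\bigl(k[[x,y]]/(f)\bigr)\cong k[x,y]/(f_{in}),\]
where $f_{in}$ is the initial form of $f$. The key point is that the initial forms of elements of $(f)$ are exactly the multiples of $f_{in}$. This holds because the initial form of $gf$ is $g_{in}f_{in}$, as $k[x,y]$ is a domain.

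If $f_{in}$ has degree $d$, then the degree-$m$ piece of $k[x,y]/(f_{in})$ has dimension $(m+1)-(m-d+1)=d$ for $m\ge d$. Hence $\mu_p(Y)=\ord f$. This is also the content of the cited \cite[Theorem 3.2(1)]{flenner1993multiplicities}, so alternatively I would simply invoke it, as the paper does in Theorem \ref{welldef_delta_mult}(1).

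Finally, the order is additive: $\ord(gh)=\ord g+\ord h$, because the product of the initial forms is nonzero in the domain $k[x,y]$. Therefore
\[\mu_p(Y)=\ord f=\sum_{i=1}^n \ord f_i=\sum_{i=1}^n\mu(B_i),\]
where the last equality is Theorem \ref{welldef_delta_mult}(1) applied to each branch $B_i=\spec k[[x,y]]/(f_i)$.

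The only step that is not a routine computation is the reducedness of the completion. Everything else is bookkeeping. If I wanted to avoid the excellence argument, I would note that it suffices to know that $\widehat{\mathcal{O}}_{Y,p}$ is reduced for a reduced local ring essentially of finite type over a field, which is standard (e.g.\ the Stacks Project's results on completions of Nagata/excellent rings).
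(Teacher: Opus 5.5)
Your proposal is correct and follows essentially the same route as the paper: write $\widehat{\mathcal{O}}_{Y,p}=k[[x,y]]/(f)$, identify $\mu_p(Y)$ with $\ord f$ and each $\mu(B_i)$ with $\ord f_i$, and use additivity of the order over the squarefree factorisation $f=f_1\cdots f_n$. Beyond the paper, you justify that the completion is reduced (via excellence of $\mathcal{O}_{Y,p}$), which the paper leaves implicit, and you compute $\mu_p(Y)=\ord f$ directly from the associated graded ring where the paper cites Flenner et al.
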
 
\begin{proof}
	Write $Y_p = \spec k[[x,y]]/(f)$ for $f\in k[[x,y]]$. Then the brances $B_i$ correspond to irreducible factors $f_i$ of $f$. Moreover, $\mu_p(Y)$ is the order of $f$, and $\mu(B_i)$ is the order of $f_i$, so the claim just follows from
	\[f = f_1f_2\ldots f_r\]
	which is true because $Y$ is reduced so $f$ has no repeated irreducible factors.
\end{proof}
\begin{remark}
	One can easily extend this to non-reduced curves by defining the multiplicity of each branch inside $Y_p$, but we will not need this.
\end{remark}

We turn to the $\delta$-invariant. The following properties are stated in \cite[§7.5]{liu}. For $Y$ an integral projective curve over $k$, write $\nu:\widetilde{Y}\to Y$ for the normalisation of $Y$. Then we have an exact sequence of sheaves
\[0\to \mathcal{O}_Y\to \nu_*\mathcal{O}_{\widetilde{Y}}\to Q\to 0\]
where $Q$ is a skyscraper sheaf supported at the singular points of $Y$. If $p$ is a closed point on $Y$, define the $\delta$-invariant of $\Gamma$ at $p$ to be
\[\delta_p(\Gamma) = \dim_k Q_p\]
which is 0 is $p$ is a smooth point of $\Gamma$, and a positive integer otherwise. 

We then have the relation
\[g(\widetilde{Y}) = p_a(Y) - \sum_p \delta_p(Y)\]
where the sum is over the closed points of $Y$. Here $g$ denotes the geometric genus and $p_a$ the arithmetic genus. Thus the $\delta$-invariant can be though of as the contribution of a singularity to the `drop' in the genus of the curve.

For any closed point $p\in Y$ we can express the $\delta$-invariant $\delta_p(Y)$ by means of the delta invariants and intersection numbers of the branches:
\begin{theorem}\label{local_global_delta}
	Let $Y$ be an integral locally planar projective curve over $k$ and let $p\in Y$ be a closed point. Suppose $Y_p$ has branches $B_1, \ldots, B_r$. Then
	\[\delta_p(Y) = \sum_{i=1}^r \delta(B_i) + \sum_{i<j}i_{Y_p}(B_i, B_j).\]
\end{theorem}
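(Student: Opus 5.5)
We first reduce to a purely local statement about the completed germ. Since $\nu$ is finite, forming the normalisation commutes with flat base change to $\widehat{\mathcal{O}}_{Y,p}$. Set $A = \widehat{\mathcal{O}}_{Y,p} = k[[x,y]]/(f)$ with $f = f_1\cdots f_r$ reduced, because $Y$ is integral and hence reduced. Then $\delta_p(Y) = \dim_k \bar A/A$, where $\bar A$ is the integral closure of $A$ in its total ring of fractions. Indeed, $Q_p\otimes \widehat{\mathcal{O}}_{Y,p} \cong \widehat{Q_p}$, which equals $Q_p$ because $Q_p$ has finite length. Moreover $(\nu_*\mathcal{O}_{\widetilde Y})_p\otimes \widehat{\mathcal{O}}_{Y,p}$ is the normalisation of $A$, since $\mathcal{O}_{Y,p}$ is excellent and completion preserves normality of the finite extension. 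The normalisation of the reduced ring $A$ is $\prod_i \overline{A/\mathfrak p_i} \cong \prod_i k[[t_i]]$, by Theorem \ref{branch_normalisation} applied to each branch $B_i = \spec A/\mathfrak p_i$.

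Next we factor the inclusion as $A \subseteq A' := \prod_i A/\mathfrak p_i \subseteq \prod_i k[[t_i]]$. This gives
\[\delta_p(Y) = \dim_k A'/A + \sum_{i=1}^r \dim_k k[[t_i]]/(A/\mathfrak p_i) = \dim_k A'/A + \sum_i \delta(B_i),\]
where the second equality uses Theorem \ref{welldef_delta_mult}(2). It therefore remains to show $\dim_k A'/A = \sum_{i<j} i_{Y_p}(B_i,B_j)$.

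We prove this by induction on $r$. Put $g = f_1\cdots f_{r-1}$ and $h = f_r$, so that $f = gh$ with $g,h$ coprime. There is an exact sequence
\[0 \to k[[x,y]]/(gh) \to k[[x,y]]/(g)\times k[[x,y]]/(h) \to k[[x,y]]/(g,h)\to 0,\]
where the first map is the diagonal and the second is the difference. Exactness in the middle uses coprimality: if $a\equiv b \bmod (g,h)$, write $a-b = ug+vh$, so that $a - ug = b+vh$ lifts the pair. Injectivity holds because $(g)\cap(h) = (gh)$ in the UFD $k[[x,y]]$. This gives $\dim_k \bigl(k[[x,y]]/(g)\times k[[x,y]]/(h)\bigr)/A = \dim_k k[[x,y]]/(g,h)$.

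Finally, $\dim_k k[[x,y]]/(g,h) = \sum_{i<r}\dim_k k[[x,y]]/(f_i,f_r)$ by additivity of intersection multiplicity in $k[[x,y]]$. This follows either from \cite{plane_algebraic_curves}, or by the same short exact sequence argument: multiplication by $f_1$ gives $0\to k[[x,y]]/(f_2\cdots,h)\to k[[x,y]]/(g,h)\to k[[x,y]]/(f_1,h)\to 0$, which is injective since $h$ is coprime to $f_1$. Combining this with the inductive hypothesis for $k[[x,y]]/(g)$, and with the formula for $i_C$ in terms of irreducible factors, gives the claim. I expect the main care to be needed in the first step, namely the compatibility of the normalisation and of $\delta$ with completion. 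The remaining steps are routine length computations.
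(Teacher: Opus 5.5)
Your proposal is correct and follows the paper's proof. You pass to $A=\widehat{\mathcal{O}}_{Y,p}$, factor $A\subseteq\prod_i A/\mathfrak p_i\subseteq\prod_i k[[t_i]]$, read off $\sum_i\delta(B_i)$ from the second index and compute the first as $\sum_{i<j}\dim_k k[[x,y]]/(f_i,f_j)$, proving by hand the two facts the paper cites (compatibility of $\delta$ with completion, and the length formula from Liu). One precision: normalisation does not commute with arbitrary flat base change, so what actually drives your first step is the excellence of $\mathcal{O}_{Y,p}$ (which you do invoke), and excellence is also what guarantees that $A$ is reduced, i.e.\ that $f=f_1\cdots f_r$ is squarefree.
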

\begin{proof}
	Put $A = \widehat{\mathcal{O}}_{Y, p}$. By \cite[Lemma 33.41.2(4)+(9), Tag 0C1R]{stacks-project}, we have that 
	\[\delta_p(Y) = \dim_k A'/A\]
	where $A'$ is the integral closure of $A$ in its total ring of fractions. If $A$ has minimal primes $\mathfrak{p}_1, \ldots, \mathfrak{p}_r$ corresponding to the branches $B_1, \ldots, B_r$, then we have 
	\[A' = \bigoplus_{i=1}^r (A/\mathfrak{p}_i)'\]
	where $(A/\mathfrak{p}_i)'$ is the integral closure of $A/\mathfrak{p}_i$ in its field of fractions. We have a chain of inclusions
	\[A \subseteq \bigoplus_{i=1}^r A/\mathfrak{p}_i \subseteq \bigoplus_{i=1}^r (A/\mathfrak{p}_i)'\]
	and the latter has index
	\[\dim_k\left(\frac{\bigoplus_{i=1}^r (A/\mathfrak{p}_i)'}{\bigoplus_{i=1}^r A/\mathfrak{p}_i}\right) = \sum_{i=1}^r \dim_k\left(\frac{(A/\mathfrak{p}_i)'}{A/\mathfrak{p}_i}\right) = \sum_{i=1}^r \delta(B_i)\]
	by Theorem \ref{welldef_delta_mult}(2). So we just need to compute the index of the first inclusion. To do this we may lift along a surjection $k[[x,y]]\onto A$, and try to compute
	\[\dim_k\left(\coker\left(k[[x,y]]/(f_1f_2\ldots f_r)\to \bigoplus_{i=1}^r k[[x,y]]/(f_i)\right)\right)\]
	for distinct irreducible $f_1, \ldots, f_r\in k[[x,y]]$. By \cite[Lemma 10.1.50]{liu}, this dimension is equal to
	\[\sum_{1\le i <j \le r} \dim_k k[[x,y]]/(f_i,f_j)\]
	which proves the desired claim.
\end{proof}

Now we look at intersection numbers. Let $Y$ be a locally planar projective curve over $k$, $p\in Y$ be a closed point, and $\Gamma_1, \ldots, \Gamma_n$ be the irreducible components of $Y$ going through $p$.

Then the $\Gamma_i$ correspond to minimal primes $\mathfrak{p}_i$ of the local ring $A = \mathcal{O}_{Y,p}$. We can define an intersection number on distinct components $\Gamma_i\not=\Gamma_j$ through $p$ via
\[i_p(\Gamma_i, \Gamma_j) = \dim_k A/(\mathfrak{p}_i + \mathfrak{p}_j).\]
Again this is easily seen to be a positive integer. We also have that
\[i_p(\Gamma_i, \Gamma_j) = \dim_k \widehat{A}/(\widehat{\mathfrak{p}}_i + \widehat{\mathfrak{p}}_j)\]
where $\widehat{A}$ is the completion of $A$ and $\widehat{\mathfrak{p}} = \mathfrak{p}\widehat{A}$.

Note that each $\Gamma_{i, p}$ (the germ of $\Gamma_i$ at $p$) is a closed subscheme of $Y_p$, so it makes sense to take the intersection number of a branch of $\Gamma_{i,p}$ with a branch of $\Gamma_{j,p}$ (as they are both branches of $Y_p$).
\begin{theorem}\label{local_global_intersection}
	Let $Y$ be a locally planar projective curve , $p\in Y$ a closed point and $\Gamma, \Gamma'$ distinct components of $Y$ passing through $p$. Then
	\[i_p(\Gamma, \Gamma') = \sum_{B, B'}i_{Y_p}(B, B')\]
	where the sum runs over all branches $B$ of $\Gamma_p$ and $B'$ of $\Gamma'_p$.
\end{theorem}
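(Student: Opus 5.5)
We reduce to a computation in $k[[x,y]]$ and then use additivity of intersection multiplicity. Put $A = \mathcal{O}_{Y,p}$, and let $\mathfrak{p}, \mathfrak{p}'$ be the minimal primes of $A$ corresponding to $\Gamma, \Gamma'$. As noted before the statement, $i_p(\Gamma,\Gamma') = \dim_k \widehat{A}/(\mathfrak{p}\widehat{A} + \mathfrak{p}'\widehat{A})$. Since $Y$ is locally planar, we choose a surjection $k[[x,y]] \onto \widehat{A}$ and write $\widehat{A} = k[[x,y]]/(f)$.

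\textbf{Step 1: describe the two germs.} Completion is flat, so $\widehat{A}/\mathfrak{p}\widehat{A}$ is the completion of the integral local ring $A/\mathfrak{p}$. This is the completed local ring of $\Gamma$ at $p$, and its spectrum is the germ $\Gamma_p$. It is a closed subscheme of $Y_p$ of pure dimension $1$, so it is a plane curve germ $k[[x,y]]/(g)$ with $g \mid f$, and similarly $\Gamma'_p = \spec k[[x,y]]/(g')$. The key input here is that $A/\mathfrak{p}$ is a reduced excellent local ring, since it is a localisation of a variety. Hence its completion is reduced, so $g = h_1\cdots h_a$ is a product of distinct irreducibles, namely the branches $B$ of $\Gamma_p$. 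Likewise $g' = h'_1\cdots h'_b$, with factors the branches $B'$ of $\Gamma'_p$. We then get
\[i_p(\Gamma,\Gamma') = \dim_k k[[x,y]]/(g, g').\]

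\textbf{Step 2: the two germs share no branch.} Suppose some $h_i$ equals some $h'_j$ up to a unit. Then the minimal prime $(h_i)$ of $\widehat A$ would contain both $\mathfrak{p}\widehat A$ and $\mathfrak{p}'\widehat A$. In that case $\mathfrak{p}\widehat A + \mathfrak{p}'\widehat A$ would not be $\widehat{\mathfrak m}$-primary. But $\mathfrak{p}+\mathfrak{p}'$ is $\mathfrak{m}$-primary, because $\Gamma \ne \Gamma'$ and $A$ has dimension $1$. Completion preserves $\mathfrak{m}$-primary ideals, so this is a contradiction. Therefore $g$ and $g'$ are coprime.

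\textbf{Step 3: additivity.} The intersection multiplicity $\dim_k k[[x,y]]/(u, vw)$ equals $\dim_k k[[x,y]]/(u,v) + \dim_k k[[x,y]]/(u,w)$ when the quantities are finite. This follows from the exact sequence
\[0 \to k[[x,y]]/(u,w) \xrightarrow{\cdot v} k[[x,y]]/(u,vw) \to k[[x,y]]/(u,v) \to 0,\]
where injectivity uses that $u$ and $v$ are coprime in the UFD $k[[x,y]]$. Alternatively one can cite \cite[Proposition 3.8]{plane_algebraic_curves}. Applying additivity in both arguments gives
\[\dim_k k[[x,y]]/(g,g') = \sum_{i,j}\dim_k k[[x,y]]/(h_i,h'_j).\]
Each summand is $i_{Y_p}(B,B')$, by the formula following Definition \ref{definition_intersection} together with Proposition \ref{intersection_independence}. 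This proves the identity.

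The main obstacle is Step 1: justifying that $\Gamma_p$ is reduced (analytic unramifiedness, via excellence) and that it is exactly the union of its branches inside $Y_p$. The rest is formal.
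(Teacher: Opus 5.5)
Your proof is correct and follows the paper's route: write $\mathfrak{p}\widehat{A}$ as the product of the principal minimal primes above it, lift along $k[[x,y]]\onto\widehat{A}$, and use bi-additivity of $\dim_k k[[x,y]]/(u,v)$. You also justify two points the paper leaves implicit, and prove additivity directly instead of citing Liu. First, $\mathfrak{p}\widehat{A}$ is radical, by excellence of $\mathcal{O}_{\Gamma,p}$; this is what the paper tacitly uses in writing $\widehat{\mathfrak{p}}=\bigcap\widetilde{\mathfrak{p}}$. Second, $g$ and $g'$ share no irreducible factor, which gives finiteness and guarantees $B\neq B'$ in every summand.
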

\begin{proof}
	Let $A = \mathcal{O}_{Y,p}$, and let $\mathfrak{p}, \mathfrak{p}'$ be the minimal primes corresponding to $\Gamma, \Gamma'$ respectively. Branches of $\Gamma$ through $p$ correspond to minimal primes above $\widehat{\mathfrak{p}} = \mathfrak{p}\widehat{A}$, and we have that
	\[\widehat{\mathfrak{p}} = \bigcap \widetilde{\mathfrak{p}} = \prod \widetilde{\mathfrak{p}},\]
	with both the intersection and the product taken over minimal primes of $\widehat{\mathfrak{p}}$. The intersection is equal to the product because each ideal is principal, as $Y$ is locally planar. By lifting along a surjection $k[[x,y]]\onto \widehat{A}$ and using additivity of the intersection number \cite[Lemma 9.1.4(c)]{liu}, we obtain 
	\[i_p(\Gamma, \Gamma') = \dim_k \widehat{A}/(\widehat{\mathfrak{p}}_i + \widehat{\mathfrak{p}}_j) = \sum_{\widetilde{\mathfrak{p}}, \widetilde{\mathfrak{p}}'}\dim_k \widehat{A}/(\widetilde{\mathfrak{p}} + \widetilde{\mathfrak{p}}') = \sum_{B, B'}i_{Y_p}(B, B').\]
\end{proof}
\subsection{Classifying singular curves in small genus}
We will now classify singularities with small $\delta$-invariant, and singular curves of small genus.

First we define the following singularities:
\begin{definition}Let $C$ be a reduced plane curve germ. We say that $C$ is
	\begin{itemize}
		\item a \emph{cusp} if $C$ is a branch with semigroup $S = \left\langle 2,3\right\rangle$
		\item a \emph{node} if $C$ has two smooth branches intersecting with intersection number 1 (transversally)
		\item a \emph{$(2,5)$-cusp} if $C$ is a branch with semigroup $S = \left\langle 2,5\right\rangle$
		\item a \emph{tacnode} if $C$ has two smooth branches intersecting with intersection number 2.
	\end{itemize}
	If $Y$ is an integral locally planar curve and $p\in Y$ is a closed point such that $Y_p$ is a cusp, we also say that $p$ is a cusp of $Y$ (similarly for node, $(2,5)$-cusp and tacnode).
\end{definition}
We claim that these are all plane curve singularities of $\delta$-invariant $\le 2$:
\begin{proposition}\label{prop_small_genus_singularities}
	Let $Y$ be an integral locally planar projective curve, and $p\in Y$ a closed point. Then
	\begin{itemize}
		\item If $\delta_p(Y) = 1$ then $p$ is a cusp or a node of $Y$
		\item If $\delta_p(Y) = 2$ then $p$ is a $(2,5)$-cusp or a tacnode of $Y$. 
	\end{itemize}
\end{proposition}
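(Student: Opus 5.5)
The plan is to use Theorem \ref{local_global_delta}:
\[\delta_p(Y) = \sum_i \delta(B_i) + \sum_{i<j} i_{Y_p}(B_i,B_j),\]
where each intersection number is a positive integer. If $Y_p$ has $r$ branches, the second sum is at least $\binom{r}{2}$. So $\delta_p(Y)\le 2$ forces $r\le 2$. We then split into the unibranch case $r=1$ and the two-branch case $r=2$.

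\textbf{Two branches.} Here $i(B_1,B_2)\ge 1$, so
\[\delta(B_1)+\delta(B_2)\le \delta_p(Y)-1\le 1.\]
If some $\delta(B_i)=1$, then $\delta_p\ge 2$ with equality only when $i(B_1,B_2)=1$. A singular branch has $\mu\ge 2$, since $\mu=1$ forces $S=\mathbb{Z}_{>0}$ because $S$ is closed under addition. Then Proposition \ref{intersection_tangency} gives $i\ge \mu(B_1)\mu(B_2)\ge 2$, which pushes $\delta_p\ge 3$, a contradiction. So both branches are smooth, since $\delta(B_i)=0$ means $S(B_i)=\mathbb{Z}_{>0}$. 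Then $\delta_p = i(B_1,B_2)$, which is $1$ (a node) or $2$ (a tacnode).

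\textbf{One branch.} Here $\delta_p=\delta(B)=|\mathbb{Z}_{>0}\setminus S|$, and we classify numerical semigroups with one or two gaps.
\begin{itemize}
\item With one gap, the gap must be $1$, since $1\notin S$ forces it; so $S=\{2,3,\dots\}=\langle 2,3\rangle$, a cusp.
\item With two gaps, $1$ is a gap. If $2\in S$, then $4\in S$, and the other gap is $3$ (giving $\langle 2,5\rangle$) or some $g\ge 5$. An odd gap $g\ge 5$ is impossible, because $3\in S$ and $2\in S$ generate all larger integers. An even gap $g$ is also impossible, because $2\in S$ puts every even number in $S$. If instead $2\notin S$, the gaps are $\{1,2\}$ and $S=\{3,4,5,\dots\}$.
\end{itemize}
This last semigroup is exactly the one ruled out in Remark \ref{exceptional_semigroup} via Theorem \ref{blowup_semigroup}. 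So a unibranch point with $\delta=2$ has $S=\langle 2,5\rangle$, a $(2,5)$-cusp.

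The only mildly delicate points are the exclusion of $\{3,4,5,\dots\}$, which is already handled by Remark \ref{exceptional_semigroup}, and the use of Proposition \ref{intersection_tangency} to force smoothness of the branches in the two-branch case. The rest is bookkeeping.
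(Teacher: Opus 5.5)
Your proof is correct and follows the paper's argument step for step. You use the $\delta$-formula of Theorem \ref{local_global_delta} to bound the number of branches by two. In the two-branch case you use Proposition \ref{intersection_tangency}, together with $\mu\ge 2$ for a singular branch, to force both branches to be smooth. In the unibranch case you use Remark \ref{exceptional_semigroup} to exclude $\{3,4,5,\dots\}$. The only difference is that you explicitly check that $\langle 2,3\rangle$, $\langle 2,5\rangle$ and $\{3,4,5,\dots\}$ are the only numerical semigroups with one or two gaps, which the paper simply asserts.
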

\begin{proof}
	Suppose $Y_p$ has branches $B_1, \ldots, B_r$. By Theorem \ref{local_global_delta} we have that
	\[\delta_p(Y) = \sum_{i=1}^r \delta(B_i) + \sum_{i<j}i_{Y_p}(B_i, B_j).\tag{$\dagger$}\]
	Suppose that $\delta_p(Y) = 1$ or $2$. Then there can be at most 2 branches: indeed if there are at least 3 branches, then the sum of the intersection numbers on the right hand side of $(\dagger)$ is at least 3, a contradiction. 
	
	If there is a unique branch, then $Y_p$ is a plane curve branch. If $\delta(Y_p) = 1$ then it has semigroup $\left\langle 2,3\right\rangle$ and $p$ is a cusp of $Y$. If $\delta(Y_p)=2$ then it has semigroup $\left\langle 2,5\right\rangle$ (the only numerical semigroups missing exactly two numbers are $\left\langle 2,5\right\rangle$ and $\left\langle 3,4,5\right\rangle$, but the latter is not the semigroup of a plane curve branch by Remark \ref{exceptional_semigroup}), so $p$ is a $(2,5)$-cusp of $Y$.
	
	If there are two branches $B,B'$, they must both be smooth: otherwise $\delta(B)+\delta(B')\ge 1$ and $i_{Y_p}(B, B')\ge \mu(B)\mu(B')\ge 2$ (since at least one of them has multiplicity $\ge 2$), so the right-hand side of $(\dagger)$ is at least 3. Hence $Y_p$ has two smooth branches intersecting with multiplicity 1 or 2, i.e. $p$ is a node or a tacnode of $Y$.
\end{proof}
Using this we can classify locally planar curves of small genus:
\begin{theorem}\label{singularity_classification}
	Let $Y$ be an integral locally planar projective curve over $k$. Let $p = p_a(Y)$ be its arithmetic genus and $g = g(\widetilde{Y})$ be the genus of its normalisation. Then $0\le g \le p$ and
	\begin{enumerate}
		\item If $p=0$, then $g=0$ and $Y\cong \mathbb{P}^1$.
		\item If $p=1$, then either $g=1$ and $Y$ is smooth, or $g=0$ and $Y$ is a rational curve with a unique node or a unique cusp.
		\item If $p=2$, then either $g=2$ and $Y$ is smooth, $g=1$ and $Y$ has a unique node or a unique cusp, or $g=0$ and $Y$ is a rational curve with two cusps, two nodes, a node and a cusp, a unique $(2,5)$-cusp or a unique tacnode.
	\end{enumerate}
\end{theorem}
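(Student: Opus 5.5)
The plan is to combine the genus formula
\[g(\widetilde{Y}) = p_a(Y) - \sum_{q} \delta_q(Y)\]
from §3.1 with Proposition \ref{prop_small_genus_singularities}. Each $\delta_q(Y)$ is a nonnegative integer, and it is positive exactly at the singular points. Hence $g\le p$. Also $g\ge 0$, since $\widetilde{Y}$ is a smooth connected projective curve (because $Y$ is integral), so its genus is nonnegative. The total drop $p-g$ equals $\sum_q \delta_q(Y)$, and this sum is distributed among the singular points.

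In case (1), $p=0$ forces $g=0$ and $\sum_q\delta_q(Y)=0$. So $Y$ is regular, hence smooth over the algebraically closed field $k$, with genus $0$. I would then invoke the standard fact that a smooth projective curve of genus $0$ over an algebraically closed field is isomorphic to $\mathbb{P}^1$ (for instance via Riemann--Roch applied to a point).

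In case (2), $p=1$. If $g=1$, there are no singular points and $Y$ is smooth. If $g=0$, then $\sum_q\delta_q=1$, so there is exactly one singular point $q$, with $\delta_q=1$. By Proposition \ref{prop_small_genus_singularities}, $q$ is a node or a cusp. In this case $\widetilde{Y}\cong\mathbb{P}^1$ by case (1) applied to the normalisation, so $Y$ is rational.

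In case (3), $p=2$, I split according to $p-g\in\{0,1,2\}$. If $p-g=0$, then $Y$ is smooth. If $p-g=1$, there is a unique singular point with $\delta=1$, which is a node or a cusp. If $p-g=2$, then either there are two singular points each with $\delta=1$, giving two cusps, two nodes, or a node and a cusp, or there is one singular point with $\delta=2$, which Proposition \ref{prop_small_genus_singularities} identifies as a $(2,5)$-cusp or a tacnode. In this last case $g=0$ and $Y$ is rational. No step is a real obstacle, since all the local work is already in Proposition \ref{prop_small_genus_singularities}. The only point that needs care is the justification that regularity of an integral curve over algebraically closed $k$ means the singular points are exactly those with $\delta_q>0$, which follows from the definition of $Q$ as the cokernel $\nu_*\mathcal{O}_{\widetilde{Y}}/\mathcal{O}_Y$.
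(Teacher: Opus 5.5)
Your proposal is correct and follows the paper's proof essentially verbatim. Both use the genus formula $p-g=\sum_q\delta_q(Y)$ to bound $g$ and to limit the number and size of the singularities, then Proposition~\ref{prop_small_genus_singularities} to identify the points with $\delta_q\in\{1,2\}$. You are slightly more explicit than the paper in justifying $g\ge 0$ and the rationality claim via $\widetilde{Y}\cong\mathbb{P}^1$, and in citing the proposition (the paper's proof mistakenly refers to the theorem itself).
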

\begin{proof}
We have that 
\[p - g = \sum_{p\in Y}\delta_p(Y)\ge 0\]
where the sum is over singular points $p\in Y$. This shows $0\le g\le p$, and $Y$ is smooth if and only if the sum on the right is empty, i.e. if $p = g$. If $g = 0$ then $Y$ must be a rational curve.

If $p = 0$ then $g=0$ as well, and the sum on the right is empty, so $Y$ is smooth, so $Y\cong \mathbb{P}^1$ (remember $k$ is algebraically closed).

If $p = 1$ then either $g = 1$ and $Y$ is smooth, or $g=0$ and there is a unique singular point with $\delta = 1$, which is either a node or a cusp by Theorem \ref{singularity_classification}.

If $p=2$ then either $g=2$ and $Y$ is smooth, $g=1$ and there is a unique singular point with $\delta = 1$ which is a cusp or a node, or $g = 0$. In this last case we can have two singular points with $\delta = 1$ for both, so they are both cusps or nodes, or a unique singular point with $\delta = 2$, which is then a $(2,5)$-cusp or a tacnode by Theorem \ref{singularity_classification} again.
\end{proof}
\section{Arithmetic surfaces}\label{section_arithmetic_surfaces}
Let $\mathcal{O}_K$ be a DVR with fraction field $K$, uniformiser $\pi$ and residue field $k$. We assume $k$ is algebraically closed. Recall that an \emph{arithmetic surface} over $\mathcal{O}_K$ is a proper flat regular $\mathcal{O}_K$-scheme $ X\to \spec \mathcal{O}_K$ of relative dimension 1. Given a smooth projective curve $C$ over $K$, a \emph{regular model} of $C$ is an arithmetic surface $X$ over $\mathcal{O}_K$ along with an isomorphism $X_K\cong C$.

We will be interested in the special fibres $X_s$ of arithmetic surfaces, especially for regular models of curves. We will define a combinatorial object attached to the special fibre (the \emph{type}), and investigate how this object changes under blowups at closed points. The main result is Theorem \ref{blowup_arithmetic_surface}, which states that for a blowup $\widetilde{X}\to X$ of an arithmetic surface at a closed point, the type of $X$ determines the type of $\widetilde{X}$ and vice versa. We then specialise to models of curves and show that the type of the minimal regular model determines the type of the minimal regular normal crossings model and vice versa.

In fact the special fibre is locally planar:
\begin{proposition}\label{special_fibre_planar}
	Let $X$ be an arithmetic surface over $\mathcal{O}_K$. Then $X_s$ is a locally planar projective curve.
\end{proposition}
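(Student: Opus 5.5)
We need to show that $X_s$ is a connected projective curve of pure dimension 1, and that each completed local ring $\widehat{\mathcal{O}}_{X_s,p}$ at a closed point is a quotient of $k[[x,y]]$. The global part is standard. $X\to\spec\mathcal{O}_K$ is proper and flat of relative dimension 1, so $X_s$ is a proper $k$-scheme. It is projective because a regular arithmetic surface is projective over $\mathcal{O}_K$ (cf. Liu, Chapter 8–9). It is pure of dimension 1 by flatness: every component of $X_s$ has dimension equal to the relative dimension, e.g. since $X_s$ is the Cartier divisor $V(\pi)$ in the regular 2-dimensional scheme $X$. Connectedness should be part of the standing conventions for models of curves: if $X$ is normal with geometrically connected generic fibre, Zariski's connectedness principle (Liu, Corollary 8.3.6) gives that $X_s$ is connected. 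If the paper's convention does not include this, I would add geometric connectedness of the generic fibre as a tacit hypothesis, as holds for regular models.

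The local statement is the real content. Let $p\in X_s$ be a closed point and put $R=\widehat{\mathcal{O}}_{X,p}$. Since $X$ is regular of dimension 2 at $p$, the ring $R$ is a complete regular local ring of dimension 2. Moreover $\widehat{\mathcal{O}}_{X_s,p}=R/\pi R$, since completion commutes with quotients for finitely generated modules over a Noetherian local ring.

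Now split according to whether $\pi\in\mathfrak{m}_R^2$.
\begin{itemize}
\item If $\pi\notin\mathfrak{m}_R^2$, then $R/\pi R$ is regular of dimension 1, so by Cohen's structure theorem it is isomorphic to $k[[t]]$, which is a quotient of $k[[x,y]]$.
\item If $\pi\in\mathfrak{m}_R^2$, then $\dim_k \mathfrak{m}/\mathfrak{m}^2=2$ for $R/\pi R$, with $\mathfrak{m}$ its maximal ideal. Since $R/\pi R$ is a complete local ring containing a field, namely $k$ via $\mathcal{O}_K\to R\to R/\pi R$, Cohen's structure theorem gives a surjection $k[[x,y]]\onto R/\pi R$ sending $x,y$ to lifts of a basis of the cotangent space.
\end{itemize}

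Both cases are really the single statement that an equicharacteristic complete local ring with embedding dimension $\le 2$ is a quotient of $k[[x,y]]$. It is cleaner to phrase the argument this way directly: the embedding dimension of $R/\pi R$ is at most that of $R$, which is 2.

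The main subtlety I expect is the coefficient field. $R$ itself may be of mixed characteristic, so $R$ is not a power series ring over $k$. However, $R/\pi R$ is a $k$-algebra, and the coefficient field must be taken compatibly with the given $k$-structure. Since $k$ is the residue field and the map $k\to R/\pi R\to k$ is the identity, the given copy of $k$ is already a coefficient field. The surjection $k[[x,y]]\onto R/\pi R$ is therefore a $k$-algebra map, obtained by sending $x,y$ to generators of the maximal ideal and using completeness. Finally, $\dim R/\pi R=1$ and $k[[x,y]]$ is a UFD, so the kernel is a height-one prime-power-type ideal, namely principal $(f)$. This matches the paper's definition of a plane curve germ $k[[x,y]]/(f)$.
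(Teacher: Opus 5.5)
Your argument is correct, but it follows a different route from the paper.

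\textbf{The two routes.} The paper invokes \cite[Exercise 9.2.7]{liu} to write $\mathcal{O}_{X,p}\cong\mathcal{O}_{Z,z}/(u_1^{d_1}\cdots u_r^{d_r}-\pi a)$, where $Z$ is a flat $\mathcal{O}_K$-scheme of relative dimension 2 that is smooth at $z$ (using that $k$ is algebraically closed). It then reads off $\widehat{\mathcal{O}}_{X_s,p}$ as a quotient of $\widehat{\mathcal{O}}_{Z_s,z}\cong k[[x,y]]$. You instead work intrinsically:
\begin{itemize}
\item $R=\widehat{\mathcal{O}}_{X,p}$ is regular of dimension 2, so $R/\pi R=\widehat{\mathcal{O}}_{X_s,p}$ has embedding dimension at most 2;
\item since $\kappa(p)=k$, the image of $k=\mathcal{O}_K/\pi$ is a coefficient field;
\item complete Nakayama then gives a $k$-algebra surjection $k[[x,y]]\onto R/\pi R$.
\end{itemize}

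\textbf{What each buys.} Your route is shorter and self-contained: it needs only regularity and the Cohen structure theorem, with no auxiliary scheme $Z$. You also address the global conditions (projectivity, purity, connectedness), which the paper's proof passes over. Your connectedness caveat is justified: $\mathbb{P}^1_{\mathcal{O}_K}\sqcup\mathbb{P}^1_{\mathcal{O}_K}$ is an arithmetic surface in the paper's sense with disconnected special fibre. The paper's route gives an explicit equation, $\widehat{\mathcal{O}}_{X_s,p}\cong k[[x,y]]/(\bar u_1^{d_1}\cdots\bar u_r^{d_r})$. So principality of the kernel, and its relation to the multiplicities $d_i$, come for free.

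\textbf{The principality step.} Your closing justification is not right as stated. A height-one ideal in the UFD $k[[x,y]]$ need not be principal: $(x^2,xy)=x\,(x,y)$ has a one-dimensional quotient with an embedded point. What you need is that $R/\pi R$ has no embedded component, and this holds:
\begin{itemize}
\item $R$ is a regular local ring, hence a Cohen--Macaulay domain, and $\pi\neq 0$ in $R$, so $\pi$ is a nonzerodivisor.
\item Hence $R/\pi R$ is Cohen--Macaulay of dimension 1, and every associated prime of the kernel $I$ is a height-one prime $(f_i)$.
\item Then $I=\bigcap (f_i^{n_i})=\bigl(\prod f_i^{n_i}\bigr)$.
\end{itemize}
Alternatively, lift to a surjection $\widehat{\mathcal{O}}_K[[x,y]]\onto R$. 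Its kernel is a height-one prime in a regular local ring, hence principal, say $(g)$, and you can then reduce modulo $\pi$.

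This point is not needed for the statement as defined, since local planarity only asks for a quotient of $k[[x,y]]$. It is needed for $Y_p$ to be a plane curve germ $k[[x,y]]/(f)$ in the paper's sense.
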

\begin{proof}
	Let $p\in X_s$ be a closed point, and $\Gamma_1, \ldots, \Gamma_r$ be the components of $X_s$ passing through $p$ of respective multiplicities $d_i$. Then by \cite[Exercise 9.2.7]{liu} there is a flat scheme $Z\to \spec \mathcal{O}_K$ of finite type and of relative dimension 2, with a closed point $z\in Z_s$ that is regular in $Z_s$, and an isomorphism
	\[\mathcal{O}_{Z,z}/(u_1^{d_1}\ldots u_r^{d_r}-\pi a)\cong \mathcal{O}_{X, p}\]
	where the image of $u_i\in \mathcal{O}_{Z,z}$ in $\mathcal{O}_{X,p}$ is a local equation for $\Gamma_i$, and $a\in \mathcal{O}_{Z,z}^\times$. Since $k$ is algebraically closed, $Z$ is in fact smooth at $z$, which implies that $\widehat{\mathcal{O}}_{Z,z}\cong \mathcal{O}_K[[x,y]]$ is a power series ring. This shows that $\widehat{\mathcal{O}}_{X_s, p}$ is a quotient of $\widehat{\mathcal{O}}_{Z_s, z}\cong k[[x,y]]$.
\end{proof}

This makes it possible to apply our previous results to the special fibre of an arithmetic surface.

If $X$ is an arithmetic surface over $\mathcal{O}_K$, then the special fibre $X_s$ is an effective Cartier divisor which decomposes as
\[X_s = \sum_\Gamma d_\Gamma \Gamma\]
where the sum is over the components $\Gamma$ of $X_s$, and $d_\Gamma$ is the multiplicity of $\Gamma$ in $X_s$ (defined as the length of the local ring $\mathcal{O}_{X_s, \xi_\Gamma}$ where $\xi_\Gamma$ is the generic point of $\Gamma$). The free abelian group on the components $\Gamma$ is identified with the group $\Div(X)$ of vertical divisors on $X$, and there is an intersection pairing
\[\cdot: \Div(X)\times \Div(X)\to \mathbb{Z}\]
which is a symmetric bilinear form. It is compatible with local intersection numbers in the sense that for components $\Gamma\not=\Gamma'$,
\[\Gamma\cdot \Gamma' = \sum_{p\in \Gamma\cap \Gamma'} i_p(\Gamma, \Gamma')\]
where $i_p(\Gamma, \Gamma')$ is the local intersection number, see \cite[Theorem 9.1.12]{liu}.
\subsection{The type of an arithmetic surface}
We first define an abstract type. This is meant to be a combinatorial description of a projective locally planar curve $Y$ over $k$, along with its singularities.
\begin{definition}\label{definition_type}
	An \emph{(abstract) type} is a tuple $T = (\comp, d, g, \sing, \inc, \br, S, i)$ where
	\begin{enumerate}
		\item $\comp$ is a nonempty finite set whose elements $\Gamma$ are called components, and $d:\comp \to \mathbb{Z}_{>0}$, $g:\comp \to \mathbb{Z}_{\ge 0}$ are functions, which we think of as associating to $\Gamma$ its multiplicity $d_\Gamma$ and geometric genus $g_\Gamma$.
		\item $\sing$ is a finite set whose elements $p$ are called singular points, along with an incidence relation $\inc\subseteq \sing\times\comp$; if $(p, \Gamma)\in \inc$ we say $p$ is on $\Gamma$ and write $p\in \Gamma$.
		\item $\br = (\br_{\Gamma, p})_{\Gamma,p}$ is a collection of nonempty finite sets $\br_{\Gamma, p}$ for all components $\Gamma$ and singular points $p$ with $p\in \Gamma$. Elements $B$ of $\br_{\Gamma, p}$ are  branches of $\Gamma$ through $p$. $S$ is a function associating to each branch $B\in  \coprod_{\substack{p, \Gamma\\p\in \Gamma}}\br_{\Gamma, p}$ a numerical semigroup $S(B)$ which we think of as the valuation semigroup of the branch $B$. $i$ is a function associating to each pair of distinct branches $B,B'$ through the same point $p$ an intersection number $i_p(B, B')$, which is a positive integer. (Note that we don't require $B$ and $B'$ to be branches of the same component, just that they are branches through the same point).
	\end{enumerate}
\end{definition}
There is an obvious notion of isomorphism of types (given by a bijection on components, singular points and branches in a compatible way and preserving multiplicities, genera, semigroups and intersection numbers).

Applying this to the case of arithmetic surfaces:
\begin{definition}\label{def_type_of_surface}
	Let $X$ be an arithmetic surface over $\mathcal{O}_K$. The \emph{type} of $X$ is the (isomorphism class of the) abstract type $(\comp, d, g, \sing, \inc, \br, S, i)$ given as follows:
	\begin{enumerate}
		\item $\comp$ is the set of irreducible components of $X_s$, and for each component $\Gamma$, $d_\Gamma$ is its multiplicity in $X_s$ and $g_\Gamma = g(\widetilde{\Gamma})$ is the genus of its normalisation.
		\item $\sing$ is the set of singular points of $(X_s)_{\mathrm{red}}$, and $\inc$ is the obvious incidence relation between singular points and components ($(p,\Gamma)\in \inc$ if and only if $p\in \Gamma$).
		\item For all components $\Gamma$ and singular points $p$ with $p\in \Gamma$, $\br_{\Gamma,p}$ is the set of branches of $\Gamma$ through $p$, $S$ associates to a branch $B$ its valuation semigroup $S(B)$, $i$ associates to two distinct branches $B, B'$ through a common point $p$ their intersection number $i_p(B, B') = i_{X_{s,p}}(B, B')$.
	\end{enumerate}
\end{definition}
We can easily see that this is well-defined (i.e. $\comp, \sing, \br_{\Gamma, p}$ are finite sets, $\comp, \br_{\Gamma,p}$ are nonempty, etc.) as $X_s$ is locally planar by Proposition \ref{special_fibre_planar}.

\begin{remark}
	Note that we haven't tried to give a notion of type such that every type is realised by some arithmetic surface. Indeed, there are obvious necessary conditions a type has to satisfy to be realisable this way: the special fibre must be connected, the intersection numbers must extend to a bilinear intersection pairing on the special fibre satisfying the adjunction formula, the intersection numbers must be compatible with the multiplicities (i.e. satisfy Theorem \ref{intersection_tangency}), the semigroups must arise as the numerical semigroup of a plane curve branch. It would be interesting to give a set of conditions on a type which are necessary and sufficient for the type to be realised in this way.
\end{remark}

Hence the type of an arithmetic surface $X$ keeps track of the following information about the special fibre:
\begin{itemize}
	\item The configuration of components and singular points,
	\item The multiplicity of each component,
	\item The geometric genus of each component,
	\item At each singular point, the set of branches along with their valuation semigroups and the intersection numbers between the branches.
\end{itemize}
In addition to this information, we can recover from the type the following information:
\begin{itemize}
	\item For a component $\Gamma$ and a singular point $p$ on it, the multiplicity $\mu_p(\Gamma)$. Indeed this follows from Theorem \ref{local_global_mult} as we know the multiplicities of each branch of $\Gamma$ through $p$, via their semigroups.
	\item For a component $\Gamma$ and a singular point $p$ on it, the $\delta$-invariant $\delta_p(\Gamma)$. This follows from Theorem \ref{local_global_delta} as we know the semigroups of the branches of $\Gamma$ through $p$ and their intersection numbers. Thus we also recover the arithmetic genus of each component $\Gamma$ via
	\[p_a(\Gamma) = g_\Gamma + \sum_{p\in \Gamma}\delta_p(\Gamma)\]
	summing over singular points $p\in \Gamma$.
	\item For components $\Gamma, \Gamma'$, the local intersection number $i_p(\Gamma, \Gamma')$, and hence the whole intersection pairing on vertical divisors. This follows from Theorem \ref{local_global_intersection} as we know the intersection numbers between branches.
\end{itemize}

In low genus one can often specify a type via a picture. On such a picture we draw a curve for each component, and label it with its multiplicity and genus (in blue in this paper, and only if multiplicity is $\ge 2$ or the genus is $\ge 1$). At intersection points, we draw a transversal intersection if the intersection number is 1, and a `tangent' intersection if it is 2 (these are the only intersection numbers that can arise in genus $\le 2$). Additionally we label each singular branch with its valuation semigroup (in red). See Example \ref{example:low_genus_singularities} for examples in genus 1 and 2, the Kodaira types in Section \ref{section_low_genus}, and Example \ref{example_genus_3} for similar genus 3 examples.

\subsection{Behaviour of the type under a blowup}
Now we investigate how the type of an arithmetic surface changes when we blow up at a singular point on the special fibre. First we establish some properties of blowups in this context, drawing on our results on blowups of plane curve germs.
\begin{proposition}\label{blowup_arithmetic_surface}
	Let $X$ be an arithmetic surface over $\mathcal{O}_K$ and $p\in X_s$ a closed point. Consider the blowup $f:\widetilde{X}\to X$ of $X$ at $p$, and let $E = f^{-1}(p)$ be the exceptional divisor. Then we have the following:
	\begin{enumerate}
		\item There is a correspondence
		\[\left\{\text{tangency classes of branches of $X_s$ through $p$}\right\}\leftrightarrow\left\{\substack{\text{closed points $q\in E$}\\\text{intersecting other components of $\widetilde{X}_s$}}\right\}\]
		Which sends the class of a branch $[B]$ to the point $q\in E$ where the strict transform $\widetilde{B}$ meets $E$.
		\item If $B$ is a branch through $p$ whose strict transform $\widetilde{B}$ meets $E$ at $q$, then $\widetilde{B}$ is a branch of $\widetilde{X}_s$ through $q$, and we have the intersection number
		\[i_{\widetilde{X}_{s, q}}(\widetilde{B}, E) = \mu(B).\]
		Note this makes sense as we can consider $E$ to be a branch of $\widetilde{X}_s$ through $q$.
		\item If $B,B'$ are tangent branches through $p$, then their strict transforms $\widetilde{B}, \widetilde{B'}$ are branches of $\widetilde{X}_s$ through the corresponding point $q\in E$, and we have
		\[i_{\widetilde{X}_{s,q}}(\widetilde{B}, \widetilde{B'}) = i_{X_{s, p}}(B, B') - \mu(B)\mu(B').\]
	\end{enumerate}  
\end{proposition}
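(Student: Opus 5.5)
The plan is to work entirely in the completed local ring at $p$ and reduce everything to the plane curve germ results of Section \ref{section_plane_curves}. Since $X$ is regular of dimension 2 at the closed point $p$ and $k$ is algebraically closed, $\widehat{\mathcal{O}}_{X,p}$ is a two-dimensional complete regular local ring. By the argument of Proposition \ref{special_fibre_planar}, $\widehat{\mathcal{O}}_{X_s,p}\cong k[[x,y]]/(F)$, where $F$ is the image of $\pi$ (up to a unit) and $x,y$ are images of a regular system of parameters $u,v$ of $\widehat{\mathcal{O}}_{X,p}$. Blowing up commutes with the flat base change $\spec\widehat{\mathcal{O}}_{X,p}\to X$, so it suffices to blow up $\spec\widehat{\mathcal{O}}_{X,p}$ at its maximal ideal. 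This blowup is covered by the two charts $\widehat{\mathcal{O}}_{X,p}[v/u]$ and $\widehat{\mathcal{O}}_{X,p}[u/v]$. In them $E$ is the projective line $\proj k[x,y]$, with local equation $u$ (respectively $v$).

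\textbf{Comparing with the germ blowup.} The key comparison is that the strict transform of $(X_s)_{red}$ near $E$ is the blowup of the plane curve germ $C=(X_{s,p})_{red}$ at its closed point. Concretely, in the chart $v=uv_1$, the total transform of a branch $h(u,v)=0$ is $u^{\mu(h)}h_1(u,v_1)$, with $h_1$ exactly as in Theorem \ref{blowup_germ}. The strict transform is $h_1=0$, and it meets $E=\{u=0\}$ where $h_1(0,v_1)=0$. For $y$-aligned $h$ (initial form $v^\mu$ after a linear change), $h_1(0,v_1)=v_1^\mu$, so the only intersection point is $v_1=0$, i.e. the point of $E$ given by the tangent line $l$. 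This is the identification of $p_T$ in the proof of Theorem \ref{blowup_germ}.

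\textbf{Part (1).} Every component of $\widetilde{X}_s$ other than $E$ is a strict transform of a component of $X_s$. Near $E$, only components through $p$ matter, and their local branches at $p$ are the branches $B$. So a closed point $q\in E$ lies on another component if and only if $q=q_{[B]}$ for some branch $B$. By the above, $q_{[B]}=q_{[B']}$ if and only if $B$ and $B'$ have the same tangent line, i.e. are tangent by Proposition \ref{tangency_initial_form}. This gives the bijection. The same local computation, together with the fact that $h_1$ is irreducible (\cite[Lemma 1.4]{plane_algebraic_curves}), shows that $\widetilde{B}$ is a branch of $\widetilde{X}_s$ through $q$. Here I also use Theorem \ref{blowup_germ}(1) to see that $\widetilde{B}=\spec k[[x,y_1]]/(h_1)$ is a plane branch.

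\textbf{Part (2).} Work in the chart above, where $\widehat{\mathcal{O}}_{\widetilde{X}_s,q}$ is a quotient of $k[[x,y_1]]$ and $E$ has equation $x$. Then
\[i_q(\widetilde{B},E)=\dim_k k[[x,y_1]]/(h_1,x)=\dim_k k[[y_1]]/(y_1^{\mu})=\mu(B).\]
Alternatively, take a parametrisation of $\widetilde{B}$ from Theorem \ref{blowup_germ}(1) with $x\mapsto\varphi(t)$, and use $\ord\varphi=\mu$ (Proposition \ref{parametrisation}) together with \cite[Theorem 3.14]{plane_algebraic_curves}.

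\textbf{Part (3).} This is Theorem \ref{blowup_intersection}(2) applied to $C$, combined with Proposition \ref{intersection_independence}. The latter lets us compute intersection numbers in the reduced germ $(\widetilde{X}_{s,q})_{red}$ or in $\widetilde{C}_{T,red}$, whichever contains both branches.

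\textbf{Main obstacle.} The main difficulty is the base-change and identification step. One must check that the completion of $\widetilde{X}$ at $q$ gives a germ $\widehat{\mathcal{O}}_{\widetilde{X}_s,q}$ that is a quotient of $k[[x,y_1]]$, in which strict transforms of branches are the $h_1$ and $E$ is $x=0$. In particular, one must verify that $X_s$ and $(X_s)_{red}$ have the same branches, so that only reduced data enter. I would handle this by completing the chart ring $\widehat{\mathcal{O}}_{X,p}[v/u]$ at the maximal ideal $(u,v_1)$, which gives $\mathcal{O}_K$-flat $[[u,v_1]]$-type coordinates. The rest then follows formally from Section \ref{section_plane_curves}.
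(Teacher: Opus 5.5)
Your proposal is correct and follows essentially the paper's route: reduce everything to the germ results, namely Theorem \ref{blowup_germ}, Theorem \ref{blowup_intersection}(2) and Proposition \ref{intersection_independence}, with the same computation $\dim_k k[[x,y_1]]/(h_1,x)=\mu(B)$ for part (2). The only differences are that you re-derive (1) from the explicit chart computation instead of citing Theorem \ref{blowup_germ}(3), and you spell out the flat base change to $\widehat{\mathcal{O}}_{X,p}$, which the paper dismisses as ``easy to see''.
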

\begin{proof}
	(1) The exceptional divisor $E$ meets other components of $\widetilde{X}_s$ precisely at points where the strict transforms of branches of $X_s$ through $p$ meet $E$. By Theorem \ref{blowup_germ}(3), these points are in bijection with tangency classes of branches of $X_s$ through $p$, and the bijection is as given above.
	
	(2) Let $C = X_{s,p}$. By Theorem \ref{blowup_germ} the blowup of $C_{red}$ has a component $\widetilde{C}_q$ with closed point $q$, and $\widetilde{B}$ is a branch of $\widetilde{C}_q$. It's easy to see that $\widetilde{C}_q$ is a closed subscheme of $\widetilde{X}_{s,q}$ and therefore $\widetilde{B}$ is a branch of $\widetilde{X}_s$ through $q$.
	
	If we write $B = \spec k[[x,y]]/(f)$ for $f$ a $y$-aligned power series of order $\mu = \mu(B)$, then 
	\[\widetilde{B} = \spec k[[x,y_1]]/(f_1)\]
	where $x^\mu f_1(x, y_1) = f(x, xy_1)$ by Theorem \ref{blowup_germ}(2). The exceptional divisor $E$ is cut out by $x=0$, which means that
	\[i_{\widetilde{X}_{s, q}}(\widetilde{B}, E) = \dim_k k[[x,y_1]]/(f_1, x) = \dim_k k[[y_1]]/(f_1(0, y_1)) = \mu\]
	as claimed.
	
	(3) This follows from Theorem \ref{blowup_intersection}(2) and Proposition \ref{intersection_independence} (the latter ensures that we can compute the intersection number in $\widetilde{X}_{s,q}$ rather than $\widetilde{C}_q$).
\end{proof}

We now turn to showing that in a blowup $\widetilde{X}\to X$ of an arithmetic surface at a closed point on the special fibre, the type of $X$ determines the type of $\widetilde{X}$ and vice versa. Note that in general, there are 2 types of such a blowup based on where the centre of the blowup is:
\begin{enumerate}
	\item We can have $p$ be a singular point of $(X_s)_{red}$, or
	\item We can have $p$ lie on a unique component $\Gamma$ of $X_s$, and be a smooth point of this component.
\end{enumerate}
Call these blowups of type 1 and type 2 respectively. We need to specify which kind of blowup is done to recover the type.
\begin{theorem}\label{blowup_type}
	Let $X$ be an arithmetic surface over $\mathcal{O}_K$ and $p$ a closed point of $X_s$. Denote by $f:\widetilde{X}\to X$ the blowup of $X$ at $p$. Then the type of $X$ determines the type of $\widetilde{X}$ and vice versa.
	
	More precisely: 
	\begin{itemize}
		\item Given the type $T = (\comp, d, g, \sing, \inc, \br, S, i)$ of $X$ and the centre of the blowup (as the centre point $p\in \sing$ for type 1, or as the component $\Gamma\in \comp$ containing the centre of the blowup in type 2), we can determine the type of $\widetilde{X}$.
		\item Given the type $T' = (\comp', d, g, \sing', \inc', \br', S, i)$ of $\widetilde{X}$ and the exceptional divisor $E$ of the blowup (as an element $E\in \comp'$), we can determine the type of $X$.
	\end{itemize}
\end{theorem}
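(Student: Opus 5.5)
The plan is to describe the type of $\widetilde{X}$ explicitly in terms of the type of $X$, and conversely, component by component and point by point. The main tools are Proposition \ref{blowup_arithmetic_surface} for the new configuration and intersection numbers, Theorem \ref{blowup_semigroup} for the semigroups, and standard facts on multiplicities of exceptional divisors. Components of $\widetilde{X}_s$ are the strict transforms $\widetilde{\Gamma}$ of components of $X_s$, together with $E\cong\mathbb{P}^1$. The strict transform keeps its multiplicity $d_\Gamma$ and geometric genus, since the normalisations agree. $E$ has genus $0$ and multiplicity $d_E=\sum_\Gamma d_\Gamma\,\mu_p(\Gamma)$, the multiplicity of $X_s$ at $p$ (pull back $\pi$ and take $\ord_E$). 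Each $\mu_p(\Gamma)$ is recoverable from the type via Theorem \ref{local_global_mult}; for a type 2 blowup it is $1$, so $d_E=d_\Gamma$.

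\textbf{Forward direction.} Singular points of $X_s$ other than $p$ are untouched, with the same branches, semigroups and intersection numbers, because $f$ is an isomorphism away from $p$. For a type 2 blowup, the only new singular point is $\widetilde{\Gamma}\cap E$: two smooth branches meeting with intersection number $\mu(B)=1$, by Proposition \ref{blowup_arithmetic_surface}(2). For a type 1 blowup at $p\in\sing$, the new singular points are indexed by the tangency classes of branches through $p$, via Proposition \ref{blowup_arithmetic_surface}(1). Tangency is read off the type from Proposition \ref{intersection_tangency}: $B,B'$ are tangent iff $i_p(B,B')>\mu(B)\mu(B')$, and it is transitive. At the point $q_T$ the branches are $E$ (smooth) and the strict transforms $\widetilde{B}$ for $B\in T$, each assigned to the component containing $B$. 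Their data are as follows:
\begin{itemize}
\item Semigroups: $S(\widetilde{B})$ is determined by $S(B)$ via the Apéry-set recipe of Theorem \ref{blowup_semigroup}, read in reverse: take the $\mu$-Apéry set of $S(B)$ and subtract $i\mu$ from the $i$-th element.
\item Intersections: $i(\widetilde{B},E)=\mu(B)$ by Proposition \ref{blowup_arithmetic_surface}(2), and $i(\widetilde{B},\widetilde{B'})=i_p(B,B')-\mu(B)\mu(B')$ by Proposition \ref{blowup_arithmetic_surface}(3).
\end{itemize}
Some $q_T$ may fail to be singular points of $(\widetilde{X}_s)_{red}$. This can only occur in the degenerate case where $T$ consists of a single smooth branch, which meets $E$ transversally at a smooth point of both. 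That point is still a point of $\widetilde{\Gamma}\cap E$, hence a singular point of the reduced fibre, so nothing is dropped. Every $q_T$ is therefore included with the data above.

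\textbf{Backward direction.} We are given $E\in\comp'$. Components of $X$ are $\comp'\setminus\{E\}$ with the same $d,g$. Singular points of $X_s$ away from $p$ are the singular points of $\widetilde{X}_s$ not on $E$. The points $q\in E\cap\sing'$ correspond to the tangency classes at $p$. We then reconstruct the germ at $p$. Its branches are the non-$E$ branches through the points of $E$, with $\mu(B)=i(\widetilde{B},E)$. The semigroup $S(B)$ is obtained from $S(\widetilde{B})$ and $\mu(B)$ by Theorem \ref{blowup_semigroup}, as in Remark \ref{semigroup_recovery}. Intersection numbers are $i(\widetilde{B},\widetilde{B'})+\mu\mu'$ for branches meeting $E$ at the same point, and $\mu(B)\mu(B')$ for branches meeting $E$ at different points, since non-tangent branches have equality in Proposition \ref{intersection_tangency}. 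If only one branch passes through $E$ and it is smooth with $\mu=1$, then $p$ was a smooth point of a single component (a type 2 blowup), and $p$ is not added to $\sing$. Otherwise $p\in\sing$.

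\textbf{Main obstacle.} The delicate part is bookkeeping rather than any single inequality. We must check that singular points of the reduced fibre and branch sets are correctly identified in both directions, particularly that the branches at $q$ that lie on $E$ are exactly $E$ itself, since $E$ is smooth. We must also handle the case where $p$ is a singular point whose branches all lie on one component; this is fine, since $p$ is still singular. Finally, we must confirm that the multiplicity formula for $E$ uses only type data, which Theorem \ref{local_global_mult} supplies, applied to each reduced component germ.
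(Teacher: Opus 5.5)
Your proposal is correct and follows essentially the same route as the paper's proof. It uses the same component, multiplicity and genus bookkeeping, Proposition \ref{blowup_arithmetic_surface} for the singular points on $E$ and their intersection numbers, Theorem \ref{blowup_semigroup} with Remark \ref{semigroup_recovery} for the semigroups (recovering $\mu(B)=i(\widetilde{B},E)$ in the reverse direction), and an equivalent criterion for recognising a type 2 blowup; in the backward direction you are, if anything, more explicit than the paper, since you note that branches whose strict transforms meet $E$ at different points are non-tangent and so have $i_p(B,B')=\mu(B)\mu(B')$ by the equality case of Proposition \ref{intersection_tangency}, and you spell out the Apéry-set shift in both directions.
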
 
\begin{remark}
	Note in the second case, i.e. knowing the type of $\widetilde{X}$ and $E$, we still need to know that $\widetilde{X}$ arises as the blowup of an arithmetic surface, i.e. $E$ needs to be a (-1)-curve.
\end{remark}
\begin{proof}
	Suppose $X_s$ has components $\Gamma_1, \ldots, \Gamma_r$. Then the components of $\widetilde{X}_s$ are the strict transforms $\widetilde{\Gamma}_i$ of the $\Gamma_i$, along with the exceptional divisor $E$. The genera and multiplicities are related by
	\begin{align*}
		g_{\widetilde{\Gamma}} = g_\Gamma,&\qquad g_E = 0\\
		d_{\widetilde{\Gamma}} = d_\Gamma,&\qquad d_E = \sum_{p\in \Gamma}d_\Gamma\mu_p(\Gamma).
	\end{align*}
	Indeed, $\widetilde{\Gamma}$ is birational to $\Gamma$ so they have the same normalisation, and $E\cong \mathbb{P}^1$. The formulae for the multiplicities follow from \cite[Exercise 9.2.9(a)]{liu}. So the components (with multiplicity and genera) of one of $X, \widetilde{X}$ are determined in terms of the type of the other.
	
	The singular points of $(X_s)_{red}$ away from the centre $p$ and those of $(\widetilde{X}_s)_{red}$ away from the exceptional divisor $E$ are the same (the blowup is an isomorphism there), so at these points the branches, semigroups and intersection numbers are all the same. So it remains to look at $p$ and singular points on $E$.
	
	The singular points of $(\widetilde{X}_s)_{red}$ on $E$ are just where $E$ meets other components (as $E\cong \mathbb{P}^1$ is smooth), so these correspond to tangency classes of branches of $X_s$ through $p$ (in a type 2 blowup, $p$ lies on a unique component $\Gamma$ with a unique smooth branch through it). For a singular point $q\in E$, the branches through it are a unique (smooth) branch of $E$ through $q$, and the strict transforms $\widetilde{B}$ of branches $B$ in the tangency class corresponding to $q$. Their intersection numbers are given by
	\begin{align*}
		i_q(\widetilde{B}, \widetilde{B'}) &= i_p(B, B')-\mu_p(B)\mu_p(B')\\
		i_q(\widetilde{B}, E) &= \mu_p(B)
	\end{align*}
	both by Proposition \ref{blowup_arithmetic_surface}. The semigroups $S(\widetilde{B})$ and $S(B)$ are related as in Theorem \ref{blowup_semigroup}.
	
	Hence by knowing the type of $X$ we can recover the singular points, branches, intersection numbers and semigroups of $\widetilde{X}$: the singular points $q$ on $E$ just correspond to tangency classes of branches through $p$, and branches through $q$ are just the strict transforms of the branches in the tangency class (and a branch of $E$). The tangency classes can be read off of the type of $X$ by Theorem \ref{intersection_tangency}. Then the intersection numbers are as given above by Theorem \ref{blowup_arithmetic_surface}, and the semigroup $S(\widetilde{B})$ is determined by $S(B)$ as in Theorem \ref{blowup_semigroup}.
	
	Conversely knowing the type of $\widetilde{X}$ and the exceptional divisor $E$, we can recover singular points, branches, intersection numbers and semigroups of $X$. We can say that the blowup is of type 2 if and only if $E$ meets only one other component $\widetilde{\Gamma}$ at a unique point $q$, and both $E$ and $\widetilde{\Gamma}$ have a unique branch through $q$ which intersect with multiplicity 1 (in this case $p$ lies only on the component $\Gamma$ of $X_s$ and is a smooth point of it). Otherwise $p$ is a singular point of $(X_s)_{red}$, and branches through it are just images of branches through singular points $q$ on $E$ that are not branches of $E$ itself. The intersection numbers are again determined by the formulas above: note that for a branch $B$ through $p$, we know $\mu_p(B)$ as it is equal to $i_q(\widetilde{B}, E)$ which can be read off of the type of $\widetilde{X}$. This also means that we can determine $S(B)$ from $S(\widetilde{B})$, since this is possible if we also know $\mu_p(B)$ as in Remark \ref{semigroup_recovery}.
\end{proof}
In fact we can write this out explicitly as an algorithm:
\begin{algorithm}\label{algorithm_blowup}
Suppose $f:\widetilde{X}\to X$ is a blowup of arithmetic surfaces at a closed point $p\in X_s$, and that $X$ has type $T = (\comp, d, g, \sing, \inc, \br, S, i)$ and $\widetilde{X}$ has type $T' = (\comp', d, g, \sing', \inc', \br, S, i)$ (we use the same letters except for the sets of components, singular points and the incidence relation; this should cause no confusion). One can determine one in terms of the other as follows:
\begin{enumerate}
	\item Suppose we are given the type $T$ of $X$. For ease of notation put
	\begin{align*}
		\mu(B) = \min S(B)&\qquad\text{for all branches }B\in \coprod_{\substack{p, \Gamma\\p\in \Gamma}}\br_{\Gamma, p},\\
		\mu_p(\Gamma) = \sum_{B\in \br_{\Gamma,p}}\mu(B)&\qquad\text{for all }p\in \sing, \Gamma\in \comp\text{ with }p\in \Gamma,\\
		\br_p = \coprod_{p\in \Gamma} \br_{\Gamma, p}&\qquad\text{ for all }p\in \sing\\
		B\sim_p B' \text{ if } i_p(B, B')>\mu_p(B)\mu_p(B')&\qquad\text{for all }B,B'\in \br_p, p\in \sing\\
	\end{align*}
	\begin{enumerate}
		\item If the blowup is of type 1 centred at some $p\in \sing$, then $T'$ is given as follows: it has components
		\[\comp' = \{\widetilde{\Gamma}\;:\;\Gamma\in \comp\}\cup\{E\} \]
		of multiplicities
		\[d_{\widetilde{\Gamma}} = d_\Gamma,\qquad d_E = \sum_{\Gamma}d_\Gamma \mu_p(\Gamma)\]
		and genera
		\[g_{\widetilde{\Gamma}} = g_\Gamma,\qquad g_E = 0.\]
		Its singular points are
		\[\sing' = \{f^{-1}(q)\;:\;q\in \sing\setminus\{p\}\}\cup \{q_C\;:\;\text{$C$ is an equivalence class on $\br_p$ under $\sim_p$}\}\]
		and the incidence relation is given as follows: $f^{-1}(q)$ for $q\in \sing\setminus\{p\}$ lies on exactly the components $\widetilde{\Gamma}$ where $q\in \Gamma$ (and $f^{-1}(q)\not\in E$). A singular point $q_C$ corresponding to an equivalence class $C$ lies on $E$ and all components $\widetilde{\Gamma}$ such that some branch of $\Gamma$ through $p$ lies in $C$. The branches through the singular points are
		\begin{align*}
			\br_{\widetilde{\Gamma}, f^{-1}(q)} = \{\widetilde{B}\;:\; B\in \br_{\Gamma, q}\}&\text{ if }q\in \sing\setminus\{p\}\\
			\br_{\widetilde{\Gamma}, q_C} = \{\widetilde{B}\;:\;B\in \br_{\Gamma, p}\text{ and }B\in C\}&\text{ for } C \text{ an equivalence class},\\
			\br_{E, q_C} = \{E_{q_C}\}.
		\end{align*}	
		The intersection pairing and semigroups at points $f^{-1}(q)$ for $q\in \sing \setminus \{p\}$ are 
		\[i_{f^{-1}(q)}(\widetilde{B}, \widetilde{B'}) = i_q(B, B'),\qquad S(\widetilde{B}) = S(B)\]
		(the same as in $T$), while at points $q_C$ corresponding to equivalence classes they are given as
		\begin{align*}
			i_{q_C}(\widetilde{B}, \widetilde{B'}) &= i_p(B, B') - \mu_p(B)\mu_p(B')\\
			i_{q_C}(\widetilde{B}, E_{q_C}) &= \mu_p(B),
		\end{align*}
		moreover $S(E_q) = \mathbb{Z}_{>0}$ and if $S(B)$ has $\mu = \mu_p(S)$-Apéry set $0\le a_0<a_1<\ldots<a_{\mu-1}$ then $S(\widetilde{B})$ has $\mu$-Apéry set $0\le a_0<a_1 + \mu <\ldots < a_{\mu-1}+(\mu-1)\mu$.
		\item If the blowup is of type 2 centred at a point lying on a unique component $\Gamma_p$, then $T'$ is given as follows: it has components
		\[\comp' = \{\widetilde{\Gamma}\;:\;\Gamma\in \comp\}\cup \{E\}\]
		of multiplicities
		\[d_{\widetilde{\Gamma}} = d_\Gamma,\qquad d_E = d_{\Gamma_p}\]
		and genera
		\[g_{\widetilde{\Gamma}} = g_\Gamma,\qquad g_E = 0.\]
		Its singular points are
		\[\sing' = \{f^{-1}(q)\;:\;q\in \sing\} \cup \{p\}\]
		and the incidence relation is given as follows: $f^{-1}(q)$ for $q\in \sing$ lies exactly on components $\widetilde{\Gamma}$ with $q\in \Gamma$ (and doesn't lie on $E$). The new singular point $p$ lies exactly on $E$ and $\widetilde{\Gamma_p}$. The branches through the singular points are
		\begin{align*}
			\br_{\widetilde{\Gamma}, f^{-1}(q)} &= \{\widetilde{B}\;:\;B\in \br_{\Gamma, q}\}\qquad\text{for }q\in \sing,\\
			\br_{\widetilde{\Gamma_p}, p} &= \{B_p\},\\
			\br_{E, p} &= \{E_p\}.
		\end{align*}
		The intersection pairing and semigroups at points $f^{-1}(q)$ for $q\in \sing$ are 
		\[i_{f^{-1}(q)}(\widetilde{B}, \widetilde{B'}) = i_q(B, B'),\qquad S(\widetilde{B}) = S(B)\]
		(the same as in $T$), and at $p$ they are given as
		\begin{align*}
			i_p(B_p, E_p) = 1,\\
			S(B_p) = S(E_p) = \mathbb{Z}_{>0}.
		\end{align*}
	\end{enumerate}
	\item Now suppose we are given the type $T'$ of $\widetilde{X}$ and an exceptional divisor $E\in \comp'$. Then we say $E$ is a \emph{tail} if it meets only one other component $\Gamma\in \comp'$ at a unique singular point $p$, and both $E$ and $\Gamma$ have a unique smooth branch through $p$ which intersect transversely i.e. with intersection number 1.
	\begin{enumerate}
		\item If $E$ is not a tail, the blowup is of type 1, and $T$ is given as follows: its components are
		\[\comp = \{f(\Gamma)\;:\;\Gamma\in \comp'\setminus\{E\}\}\]
		with multiplicities and genera
		\[d_{f(\Gamma)} = d_\Gamma,\qquad g_{f(\Gamma)} = g_\Gamma.\]
		Its singular points are
		\[\sing = \{f(q)\;:\;q\in \sing', q\not\in E\}\cup \{p\}\]
		and the incidence relation is given as follows: for $q\in \sing', q\not\in E$ we have $f(q)\in f(\Gamma)$ if and only if $q\in \Gamma$, and $p\in f(\Gamma)$ if and only if $\Gamma$ intersects $E$ i.e. $E$ and $\Gamma$ have a singular point $q\in \sing'$ in common (so we think $p = f(E)$). The branches through the singular points are
		\begin{align*}
			\br_{f(\Gamma), f(q)} &= \{f(B)\;:\;B\in \br_{\Gamma, q}\}\qquad\text{for }q\in \sing',q\not\in E,\\
			\br_{f(\Gamma), p} &= \{f(B)\;:\; B\in \coprod_{q\in \sing', q\in E}\br_{\Gamma, q}\}.
		\end{align*}
		The intersection pairing and semigroups at $f(q)$ for $q\in \sing', q\not\in E$ are given as
		\[i_{f(q)}(f(B), f(B')) = i_q(B, B'),\qquad S(f(B)) = S(B)\]
		(the same as in $T'$), and at $p$ the intersection pairing is given by
		\[i_p(f(B), f(B')) = \begin{cases}
			i_q(B, B') + i_q(B, E) + i_q(B', E)&\text{ if }B,B'\text{ are both branches at the same }q\in E\\
			i_q(B, E) + i_{q'}(B', E)&\text{ if }B,B'\text{ are branches at distinct points }q,q'\in E,\\
		\end{cases}\]
		and the semigroups are given as follows: if $B\in \br_{\Gamma, q}$ for some $q\in \sing', q\in E$, we put $\mu = i_q(B, E)$. Then if $S(B)$ has $\mu$-Apéry set $0\le a_0<a_1<\ldots<a_{\mu-1}$, then $S(f(B))$ has $\mu$-Apéry set $0\le a_0<a_1-\mu<\ldots<a_{\mu-1} - \mu(\mu-1)$.
		\item If $E$ is a tail, then the blowup is of type 2, and $T$ is given as follows: its components are
		\[\comp = \{f(\Gamma)\;:\;\Gamma\in \comp'\setminus\{E\}\}\]
		with multiplicities and genera
		\[d_{f(\Gamma)} = d_\Gamma,\qquad g_{f(\Gamma)} = g_\Gamma.\]
		Its singular points are
		\[\sing = \{f(q)\;:\;q\in \sing', q\not\in E\}\]
		and the incidence relation is given as follows: for $q\in \sing', q\not\in E$ we have $f(q)\in f(\Gamma)$ if and only if $q\in \Gamma$. The branches through the singular points are
		\[\br_{f(\Gamma), f(q)} = \{f(B)\;:\;B\in \br_{\Gamma, q}\}\qquad\text{for }q\in \sing',q\not\in E.\]
		The intersection pairing and semigroups are given as
		\[i_{f(q)}(f(B), f(B')) = i_q(B, B'),\qquad S(f(B)) = S(B)\]
		(the same as in $T'$).
	\end{enumerate}
\end{enumerate}
\end{algorithm}

\subsection{Reduction types}
Now we turn to defining the reduction type of a (smooth, projective) curve over $K$.

Let $C$ be a smooth projective curve over $K$ of genus $g\ge 1$. Then regular models $X$ of $C$ are arithmetic surfaces over $\mathcal{O}_K$. There are some canonical choices of models of $C$, of which we treat the following two (see \cite[Proposition 10.1.8]{liu}):
\begin{itemize}
	\item There is a unique \emph{minimal regular model} $X_{\mathrm{min}}$ of $C$, with the universal property that for any regular model $X$ of $C$, there is a unique morphism of models $X\to X_{\mathrm{min}}$.
	\item There is also a unique \emph{minimal regular model with normal crossings} $X_{\mathrm{rnc}}$ of $C$, with the universal property that its special fibre is a normal crossings divisor, and for any other regular model $X$ such that $X_s$ is a normal crossings divisor, there is a unique morphism of models $X\to X_{\mathrm{rnc}}$.
\end{itemize}
One can define the reduction type of the curve in terms of the type of any one of these models:
\begin{definition}
	Let $C$ be a smooth projective curve over $K$ of genus $g\ge 1$. The \emph{minimal regular reduction type} of $C$ is the type of the minimal regular model $X_{\mathrm{min}}$ of $C$. The \emph{minimal normal crossings reduction type} of $C$ is the type of the minimal regular normal crossings model $X_{\mathrm{rnc}}$ of $C$.
\end{definition}
Note that the minimal normal crossings type of $C$ is much more constrained: any singular point $p$ on the reduced special fibre of $X_{\mathrm{rnc}}$ lies on either one component (a node) or on two components, and in both cases has two smooth branches through it which intersect transversely. This therefore determines the branches, their intersection numbers and semigroups, and we just need to keep track of the configuration of components on the special fibre, along with multiplicities and genera, and also how many nodes are on each component.

Our main result is the following:
\begin{theorem}\label{min_reg<->rnc}
	Let $C$ be a smooth projective curve over $K$ of genus $g\ge 1$. Then the minimal regular reduction type of $C$ determines the minimal normal crossings reduction type and vice versa. Both of them are determined by the type of any regular model $X$ of $C$.
	There is an explicit algorithm to determine the type of any regular model from the minimal regular type or vice versa. 
\end{theorem}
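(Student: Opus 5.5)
The plan is to reduce everything to Theorem \ref{blowup_type} by expressing the relevant models as finite chains of blowups at closed points, each of which can be detected purely from the type.

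\textbf{Step 1: any regular model $X$ determines $X_{\mathrm{min}}$.} By the universal property, the morphism $X\to X_{\mathrm{min}}$ factors as a finite sequence of contractions of exceptional curves (Castelnuovo's criterion together with the factorisation of birational morphisms of regular surfaces into blowups at closed points, \cite[Chapter 9]{liu}). At each stage we must recognise an exceptional curve from the type alone. A component $E$ is a $(-1)$-curve exactly when $g_E=0$, all $\delta_p(E)=0$ (so $p_a(E)=0$), and $E^2=-1$. All three quantities are readable from the type:
\begin{itemize}
\item $p_a(E)$ comes from the genus and the semigroups and intersection numbers of the branches of $E$ at its singular points, via Theorem \ref{local_global_delta};
\item $E^2$ comes from $d_E E^2=-\sum_{\Gamma\neq E}d_\Gamma\,\Gamma\cdot E$, using $E\cdot X_s=0$ and Theorem \ref{local_global_intersection}.
\end{itemize}
We choose any such $E$ and apply the second half of Theorem \ref{blowup_type} (Algorithm \ref{algorithm_blowup}(2)) to compute the type of the contracted surface. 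We iterate until no $(-1)$-curve remains; this happens after finitely many steps, since the number of components drops each time. The resulting type is that of $X_{\mathrm{min}}$, because a regular model with no exceptional curves is minimal when $g\ge1$, and $X_{\mathrm{min}}$ is unique. In particular this applies with $X=X_{\mathrm{rnc}}$, so the normal crossings type determines the minimal regular type.

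\textbf{Step 2: the minimal regular type determines the minimal normal crossings type.} Starting from $X_{\mathrm{min}}$, we obtain a normal crossings model by repeatedly blowing up the points where $X_s$ fails to be normal crossings. From the type we can tell whether $(X_s)_{red}$ is normal crossings: every singular point must carry exactly two branches, both with semigroup $\mathbb{Z}_{>0}$ and intersection number $1$. In addition, each component must be smooth, which we read off as having no singular point at which it has two branches. If the condition fails at some $p\in\sing$, we blow up $p$, which is a type 1 blowup, and compute the new type by Algorithm \ref{algorithm_blowup}(1)(a).

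The standard embedded resolution procedure terminates, because the blowup decreases multiplicities and intersection numbers (Theorems \ref{blowup_semigroup} and \ref{blowup_intersection}). This yields the type of some normal crossings model $Y$. We then pass from $Y$ to $X_{\mathrm{rnc}}$ by contracting those $(-1)$-curves whose contraction preserves normal crossings. By \cite[Proposition 10.1.8]{liu} and its proof, these are the exceptional $E$ meeting the rest of the special fibre in at most two points, counted suitably. This condition is again readable from the type, and we contract using Algorithm \ref{algorithm_blowup}(2).

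\textbf{The main obstacle} is justifying these two characterisations:
\begin{itemize}
\item that $X_{\mathrm{rnc}}$ is reached from any normal crossings model by contracting exactly the $(-1)$-curves whose contraction keeps normal crossings;
\item that whether such a contraction keeps normal crossings is determined by the type.
\end{itemize}
For the second point, the contracted image point is normal crossings exactly when the resulting branches are smooth and pairwise transversal. This is computable from Algorithm \ref{algorithm_blowup}(2)(a), since semigroups recover via Apéry sets and intersection numbers via the stated formula. For the first point we rely on the uniqueness statement in \cite[Proposition 10.1.8]{liu}. Combining Steps 1 and 2, any regular model's type determines both types, and the composite procedure is the claimed explicit algorithm.
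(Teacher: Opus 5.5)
Your Step 1 is the paper's proof: factor $X\to X_{\mathrm{min}}$ by contracting $(-1)$-curves and apply Theorem~\ref{blowup_type} at each stage. You also usefully make explicit that such curves can be read off the type through $g_E$, the $\delta_p(E)$, and $d_EE^2=-\sum_{\Gamma\neq E}d_\Gamma\,\Gamma\cdot E$.

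Step 2 is where you go beyond the paper. The paper only notes that $X_{\mathrm{rnc}}\to X_{\mathrm{min}}$ is a chain of blowups and concludes that the two types determine each other. Its explicit recipe runs downward from $X$. Since Theorem~\ref{blowup_type} takes the centre of each blowup as input, the paper leaves implicit how those centres are read off from the minimal regular type alone. Your procedure (blow up the non-normal-crossings points, then tidy up) supplies this. It gives a genuine algorithm in the upward direction, at the price of needing to know which blowups are forced.

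Step 2 needs the following corrections.

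\begin{itemize}
\item \textbf{Do not require components to be smooth.} In the paper, normal crossings allows a component with a node: $\mathrm{I}_1$ is its own minimal normal crossings model. So the test is only that every point of $\sing$ carries exactly two branches with semigroup $\mathbb{Z}_{>0}$ meeting with intersection number $1$. If your strict test is also used in the contraction phase, you end at $\widetilde{\Gamma}+2E$ for $\mathrm{I}_1$ rather than at $X_{\mathrm{rnc}}$.

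\item \textbf{With the correct test, no contraction phase is needed.} Suppose $Z\to X$ is a morphism of regular models with $Z_s$ normal crossings, and $p\in X_s$ is not a normal crossings point. Then $Z\to X$ is not an isomorphism over any neighbourhood of $p$, so it factors through $\mathrm{Bl}_pX$. Hence every model in the greedy chain starting at $X_{\mathrm{min}}$ is dominated by $X_{\mathrm{rnc}}$. The chain therefore has at most as many steps as $X_{\mathrm{rnc}}\to X_{\mathrm{min}}$, which replaces your heuristic termination argument. Its end is a normal crossings model that both dominates and is dominated by $X_{\mathrm{rnc}}$, so it equals $X_{\mathrm{rnc}}$.

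\item \textbf{If you keep a contraction phase, fix the criterion.} The contracted point is normal crossings iff it has at most two branches, both smooth and transversal. ``Pairwise transversal'' alone would accept the triple point of type $\mathrm{IV}$.

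\item \textbf{Do not use the formulas as printed in Algorithm~\ref{algorithm_blowup}(2)(a).} By Theorem~\ref{blowup_intersection} and Proposition~\ref{intersection_tangency}, the intersection numbers at the contracted point are $i_q(B,E)\,i_{q'}(B',E)$ for $q\neq q'$, and $i_q(B,B')+i_q(B,E)\,i_q(B',E)$ for $q=q'$. These are products, not sums; with sums, the contracted $\mathrm{I}_1$ would wrongly acquire a tacnode.

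\item \textbf{The Apéry shifts are also reversed in the printed Algorithm.} By Theorem~\ref{blowup_semigroup}, contracting adds $i\mu$ to the $i$-th element of the $\mu$-Apéry set and blowing up subtracts it. Algorithm~\ref{algorithm_blowup} states the opposite in both (1)(a) and (2)(a).
\end{itemize}
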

\begin{proof}
	Let $X_{\mathrm{min}}$ be the minimal regular model of $C$ and $X$ be an arbitrary regular model of $C$. By the universal property of $X_{\mathrm{min}}$ there is a morphism of models $X\to X_{\mathrm{min}}$. By \cite[Theorem 9.2.2]{liu} this morphism can be written as a composition of finitely many blowups along closed points. Moreover the map is an isomorphism on the generic fibre, so each blowup is centred on the special fibre. Therefore there is a sequence 
	\[X=X_n\to X_{n-1}\to \ldots \to X_1 \to X_0=X_{\mathrm{min}}\tag{$\dagger$}\]
	where each $X_{i+1}\to X_i$ is a blowup at a closed point on the special fibre. So by successive applications of Theorem \ref{blowup_type}, we see that the type of $X_{\mathrm{min}}$ determines the type of $X$ and vice versa. Putting $X = X_{\mathrm{rnc}}$ we get that the minimal regular type determines the minimal normal crossings type and vice versa.
	
	Moreover the sequence $(\dagger)$ can be found by contracting $(-1)$-curves on the special fibre of $X$ until none remain (\cite[Proposition 3.19]{liu}). Then applying Algorithm \ref{algorithm_blowup} repeatedly to these blowups gives an algorithm determining the type of $X$ in terms of the type of $X_{\mathrm{min}}$ or vice versa.
\end{proof}
This means that we can define the \emph{reduction type} as being either of these reduction types:
\begin{definition}
	The \emph{reduction type} of a smooth projective curve $C/K$ is its minimal regular reduction type (or its minimal normal crossings reduction type).
\end{definition}
It also follows that the reduction type of a curve can be read off of the special fibre of any regular model.
\section{Classification in low genus}\label{section_low_genus}
Let $\mathcal{O}_K$ be a DVR with fraction field $K$ and residue field $k$. We assume $k$ is algebraically closed. In this section we will look at minimal regular/minimal regular normal crossings models of smooth projective curves $C/K$ of low genus, and classify them by their reduction type.

We do this by using the classification of Dokchitser \cite{tim_classification} in terms of the minimal normal crossings model, and using our earlier result to translate this into a classification in terms of the minimal regular model. First we need to show that the notion of a reduction type in loc.cit. yields a type in our sense:
\begin{definition}[$\approx$ Definition 3.6 of a reduction type in \cite{tim_classification}]
	A \emph{normal crossings reduction type} is a tuple $R = (\comp,d,p,g,\cdot)$ with
	\begin{itemize}
		\item $\comp$ is a nonempty finite set whose elements $\Gamma$ are called components,
		\item $d:\comp\to \mathbb{Z}_{\ge 0}$ and $p,g:\comp\to \mathbb{Z}_{\ge 0}$ are functions sending $\Gamma$ to its multiplicity $d_\Gamma$ and arithmetic/geometric genera $p_\Gamma, g_\Gamma$ with $g_\gamma\le p_\Gamma$,
		\item $\cdot: \mathbb{Z}\comp\times \mathbb{Z}\comp\to \mathbb{Z}$ is a symmetric bilinear pairing with $\Gamma\cdot \Gamma'\ge 0$ for $\Gamma\not=\Gamma'$, and such that the left kernel of $\cdot$ is free of rank one and contains $\sum d_\Gamma \Gamma$.
	\end{itemize}
\end{definition}
Note we don't require reduction types to satisfy a minimality condition, this will not change any result in what follows.

Given an arithmetic surface $X$ such that $X_s$ is a normal crossings divisor, it gives rise to a normal crossings reduction type in the obvious way, taking $\comp$ to be its set of components, $d,p,g$ to be their multiplicities and arithmetic/geometric genera, and $\cdot$ the intersection pairing on components. From this notion we can extract a type in our sense:

\begin{definition}
	Let $R = (\comp, d, p, g, \cdot)$ be a normal crossings reduction type. The \emph{associated type} is given by $T = (\comp, d, g, \sing, \inc, \br, S, i)$ where
	\begin{itemize}
		\item $\comp$ and $d,g$ are as in $R$,
		\item the set of singular points $\sing$ is the disjoint union of sets $N_\Gamma$ and $I_{\{\Gamma, \Gamma'\}}$ where
			\subitem for all $\Gamma\in S$, a set $N_\Gamma$ of nodes of $\Gamma$, where $|N_\Gamma| = p_\Gamma - g_\Gamma$,
			\subitem for all unordered pairs $\{\Gamma,\Gamma'\}$ of distinct elements of $S$, a set $I_{\{\Gamma, \Gamma'\}}$ of intersection points of size $|I_{\{\Gamma, \Gamma'\}}| = \Gamma\cdot \Gamma'$.
		The incidence relation is given as follows: any $p\in N_\Gamma$ lies only on $\Gamma$, and any $p\in I_{\{\Gamma, \Gamma'\}}$ lies only on $\Gamma$ and $\Gamma'$.
		\item We have the branches
		\begin{align*}
			\br_{\Gamma, p} = \{\Gamma_{p, 1}, \Gamma_{p,1}\}\text{ for }p\in N_\Gamma\\
			\br_{\Gamma, p} = \{\Gamma_p\}\text{ for }p\in I_{\{\Gamma, \Gamma'\}}.
		\end{align*}
		Every intersection number between distinct branches is 1 (i.e. every intersection is transversal) and the semigroup of every branch is $\mathbb{Z}_{>0}$ (i.e. every branch is smooth).
	\end{itemize}
\end{definition}
It's clear that given an arithmetic surface $X$, the associated type of its normal crossings reduction type is exactly the type of $X$ (i.e. the above association is compatible with taking types of arithmetic surfaces).
\subsection{Genus 1}
Let $X$ be the minimal regular model of a smooth projective genus 1 curve over $K$. Then the numerical type of $X$ satisfies
\[\sum_\Gamma d_\Gamma(2p_\Gamma-2-\Gamma\cdot\Gamma) = 0\]
by the adjunction formula. This means that we can take `multiples' of types in genus 1:
\begin{definition}
	Let $T = (\comp, d, g, \sing, \inc, \br, S, i)$ be a type and $m\ge 1$ an integer. Define the type $[m]T$ as 
	\[[m]T = (\comp, d', g, \sing, \inc, \br, S, i)\]
	where $d'_\Gamma = md_\Gamma$ for all $\Gamma\in \comp$. Hence we just multiply the multiplicity of every component by $m$.
\end{definition}
It is a classical result of \cite{Kodaira} and Néron \cite{Neron} that (in our terminology) the minimal regular reduction type of an elliptic curve (or genus 1 curve with a rational point) is given by one of the \emph{Kodaira types}, which are as follows:

\begin{figure}[h!]
	\centering
	\tikz{\node[label=-90:{$\mathrm{I}_0$}](1){\resizebox{!}{1cm}{
				\begin{tikzpicture}
					\clip (-1, -1) rectangle + (2,2);
					\draw (0,0) ellipse (1 and 0.5);
					\node[blue] at (0.5, -0.2) {g1};
				\end{tikzpicture}
			}
		};
	}
	\hspace*{1cm}
	\tikz{\node[label=-90:{$\mathrm{I}_n\,(n\ge 1)$}](1){\resizebox{!}{1cm}{
				\begin{tikzpicture}
					\clip (-0.6,-0.1) rectangle + (2.2, 2.2);
					\draw[thick] (-0.1, 0) to (1.1, 0);
					\draw[thick, dotted] (-0.1, 1.732) to (1.2, 1.732);
					\draw[thick] (0.05, -0.086) to (-0.55, 0.952);
					\draw[thick] (0.995, -0.086) to (1.55, 0.952);
					\draw[thick] (-0.55, 0.779) to (0.05, 1.818);
					\draw[thick] (1.505, 0.779) to (1.05, 1.818);
				\end{tikzpicture}
			}
		};
	}
	\hspace*{1cm}
	\tikz{\node[label=-90:{$\mathrm{II}$}](1){\resizebox{!}{1cm}{
				\begin{tikzpicture}
					\clip (-0.5, -0.9) rectangle + (2,1.8);
					\node[left, red, scale = 0.6] at (0,0){$\left\langle 2,3\right\rangle$};
					\draw[thick] (1,0.7) to [out = -100, in = 0] (0,0) to [out = 0, in = 100] (1, -0.7);
				\end{tikzpicture}
			}
		};
	}
	\hspace*{1cm}
	\tikz{\node[label=-90:$\mathrm{III}$](1){\resizebox{!}{1cm}{
				\begin{tikzpicture}
					\clip (-1,-1) rectangle + (2, 2);
					\draw[rotate = -90, thick] (-1, 0.5) parabola bend(0,0) (1, 0.5);
					\draw[rotate = 90, thick] (-1, 0.5) parabola bend(0,0) (1, 0.5);
				\end{tikzpicture}
			}
		};
	}
	\hspace*{1cm}
	\tikz{\node[label=-90:$\mathrm{IV}$](1){\resizebox{!}{1cm}{
				\begin{tikzpicture}
					\clip (-1,-1) rectangle + (2, 2);
					\draw[thick] (0,1) to (0, -1);
					\draw[thick] (-0.866, 0.5) to (0.866, -0.5);
					\draw[thick] (-0.866, -0.5) to (0.866, 0.5);
				\end{tikzpicture}
			}
		};
	}
	\hspace*{1cm}
	\tikz{\node[label=-90:$\mathrm{I}_0^*$](1){\resizebox{!}{1cm}{
				\begin{tikzpicture}
					\clip (-0.3,-0.5) rectangle + (2, 2);
					\draw[thick] (0,0) to (1.8, 0);
					\draw[thick] (0.2, -0.1) to (0.2, 0.9);
					\draw[thick] (0.5, -0.1) to (0.5, 0.9);
					\draw[thick] (0.8, -0.1) to (0.8, 0.9);
					\draw[thick] (1.1, -0.1) to (1.1, 0.9);
					\node[blue, below] at (1.6, 0) {2};
				\end{tikzpicture}
			}
		};
	}
	\hspace*{1cm}
	\tikz{\node[label=-90:$\mathrm{I}_n^*\,(n\ge 1)$](1){\resizebox{!}{1cm}{
				\begin{tikzpicture}
					\clip (-0.1, -0.3) rectangle + (2,1.5);
					\draw[thick](0,0) to (0.8, 0);
					\draw[thick](0.1, -0.1) to (0.1, 0.8);
					\draw[thick](0.2, -0.1) to (0.2, 0.8);
					\draw[thick](0.7, -0.1) to (0.7, 0.8);
					\draw[thick, dotted](0.6, 0.7) to (1.2, 0.7);
					\draw[thick](1.1, 0.8) to (1.1, -0.1);
					\draw[thick](1, 0) to (1.8, 0);
					\draw[thick](1.6, -0.1) to (1.6, 0.8);
					\draw[thick](1.7, -0.1) to (1.7, 0.8);
					\node[blue, below, scale = 0.8] at (0.4, 0) {2};
					\node[blue, left, scale = 0.8] at (0.7, 0.35) {2};
					\node[blue, right, scale = 0.8] at (1.1, 0.35) {2};
					\node[blue, below, scale = 0.8] at (1.4, 0) {2};
				\end{tikzpicture}
			}
		};
	}
	\hspace*{1cm}
	\tikz{\node[label=-90:$\mathrm{II}^*$](1){\resizebox{!}{1cm}{
				\begin{tikzpicture}
					\clip (-0.1, -0.3) rectangle + (3.6, 1.5);
					\draw[thick] (0,0) to (0.8, 0);
					\draw[thick] (0.1, -0.1) to (0.1, 0.8);
					\draw[thick] (0.7, -0.1) to (0.7, 0.8);
					\draw[thick] (0.6, 0.7) to (1.4, 0.7);
					\draw[thick] (1.3, 0.8) to (1.3, -0.1);
					\draw[thick] (1.2, 0) to (2.8, 0);
					\draw[thick] (2, -0.1) to (2, 0.8);
					\draw[thick] (2.7, -0.1) to (2.7, 0.8);
					\draw[thick] (2.6, 0.7) to (3.4, 0.7);
					\node[blue, below, scale = 0.8] at (0.4, 0) {2};
					\node[blue, left, scale = 0.8] at (0.7, 0.35) {3};
					\node[blue, above, scale = 0.8] at (1, 0.7) {4};
					\node[blue, left, scale = 0.8] at (1.3, 0.35) {5};
					\node[blue, below, scale = 0.8] at (1.7, 0) {6};
					\node[blue, left, scale = 0.8] at (2, 0.35) {3};
					\node[blue, left, scale = 0.8] at (2.7, 0.35) {4};
					\node[blue, above, scale = 0.8] at (3, 0.7) {2};
				\end{tikzpicture}
			}
		};
	}
	\hspace*{1cm}
	\tikz{\node[label=-90:$\mathrm{III}^*$](1){\resizebox{!}{1cm}{
				\begin{tikzpicture}
					\clip (-0.1, -0.3) rectangle + (3, 1.5);
					\draw[thick] (0,0) to (0.8, 0);
					\draw[thick] (0.1, -0.1) to (0.1, 0.8);
					\draw[thick] (0.7, -0.1) to (0.7, 0.8);
					\draw[thick] (0.6, 0.7) to (2, 0.7);
					\draw[thick] (1.3, -0.1) to (1.3, 0.8);
					\draw[thick] (1.9, -0.1) to (1.9, 0.8);
					\draw[thick] (1.8, 0) to (2.6, 0);
					\draw[thick] (2.5, -0.1) to (2.5, 0.8);
					\node[blue, below, scale = 0.8] at (0.4, 0) {2};
					\node[blue, left, scale = 0.8] at (0.7, 0.35) {3};
					\node[blue, above, scale = 0.8] at (1, 0.7) {4};
					\node[blue, left, scale = 0.8] at (1.3, 0.35) {2};
					\node[blue, left, scale = 0.8] at (1.9, 0.35) {3};
					\node[blue, below, scale = 0.8] at (2.2, 0) {2};
				\end{tikzpicture}
			}
		};
	}
		\hspace*{1cm}
	\tikz{\node[label=-90:$\mathrm{IV}^*$](1){\resizebox{!}{1cm}{
				\begin{tikzpicture}
					\clip (-0.1, -0.3) rectangle + (3.2, 1.5);
					\draw[thick] (0,0) to (3, 0);
					\draw[thick] (0.2, -0.1) to (0.2, 0.8);
					\draw[thick] (0.1, 0.7) to (0.9, 0.7);
					\draw[thick] (1.2, -0.1) to (1.2, 0.8);
					\draw[thick] (1.1, 0.7) to (1.9, 0.7);
					\draw[thick] (2.2, -0.1) to (2.2, 0.8);
					\draw[thick] (2.1, 0.7) to (2.9, 0.7);
					\node[blue, below, scale = 0.8] at (0.6, 0) {3};
					\node[blue, left, scale = 0.8] at (0.2, 0.35) {2};
					\node[blue, left, scale = 0.8] at (1.2, 0.35) {2};
					\node[blue, left, scale = 0.8] at (2.2, 0.35) {2};
				\end{tikzpicture}
			}
		};
	}
\end{figure}
We have moreover the minimal regular reduction type of an arbitrary genus 1 curve is some multiple $[m]\mathrm{Kod}$ where $\mathrm{Kod}$ is a Kodaira type \cite{saito1987vanishing}.

We recover this classification using our notion of reduction type, and we can also determine the normal crossings reduction type in each case.
\begin{theorem}
	Let $C/K$ be a smooth projective curve of genus 1. Then $C$ has reduction type $[m]\mathrm{Kod}$ for some $m\ge 1$ and $\mathrm{Kod}$ a Kodaira type. The minimal regular and minimal normal crossings reduction types are given as follows:
	\begin{table}[H]
	\centering
	\begin{tabular}{c|c|c}
		Kodaira type   & Minimal regular & Minimal normal crossings \\
		\hline
		$\mathrm{I}_0$ & \multicolumn{2}{c}{\begin{tikzpicture}[baseline = (A.base), scale = 0.7]
			\clip (-1, -1) rectangle + (2,2);
			\draw (0,0) ellipse (1 and 0.5);
			\node[blue] at (0.5, -0.2) {g1};
			\node at (0,0)(A){};
		\end{tikzpicture}} \\            
		\hline
		$\mathrm{I}_n$ & \multicolumn{2}{c}{\begin{tikzpicture}[baseline = (A.base), scale = 0.7]
				\clip (-0.6,-0.3) rectangle + (2.2, 2.4);
				\node at (0.966, 1)(A){};
				\draw[thick] (-0.1, 0) to (1.1, 0);
				\draw[thick, dotted] (-0.1, 1.732) to (1.2, 1.732);
				\draw[thick] (0.05, -0.086) to (-0.55, 0.952);
				\draw[thick] (0.995, -0.086) to (1.55, 0.952);
				\draw[thick] (-0.55, 0.779) to (0.05, 1.818);
				\draw[thick] (1.505, 0.779) to (1.05, 1.818);
		\end{tikzpicture}}\\
		\hline
		$\mathrm{II}$ & \begin{tikzpicture}[baseline = (A.base), scale = 0.7]
			\clip (-0.5, -0.9) rectangle + (2,1.8);
			\node at (0,0)(A){};
			\node[red, left, scale = 0.4] at (0,0){$\left\langle 2,3\right\rangle$};
			\draw[thick] (1,0.7) to [out = -100, in = 0] (0,0) to [out = 0, in = 100] (1, -0.7);
		\end{tikzpicture} & \begin{tikzpicture}[baseline = (B.base), scale = 0.7]
		\clip (-0.1, -0.3) rectangle + (2.2, 1.7);
		\node at (1, 0.6)(B){};
		\draw[thick] (0,0) to (2, 0);
		\draw[thick] (0.2, -0.1) to (0.2, 1);
		\draw[thick] (0.6, -0.1) to (0.6, 1);
		\draw[thick] (1, -0.1) to (1, 1);
		\node[above, scale = 0.6, blue] at (1,1){3};
		\node[above, scale = 0.6, blue] at (0.6,1){2};
		\node[below, scale = 0.6, blue] at (1.5,0) {6};
		\end{tikzpicture}\\
		\hline
		$\mathrm{III}$& \begin{tikzpicture}[baseline = (A.base), scale = 0.7]
			\clip (-1,-1.2) rectangle + (2, 2.4);
			\node at (0,0)(A){};
			\draw[rotate = -90, thick] (-1, 0.5) parabola bend(0,0) (1, 0.5);
			\draw[rotate = 90, thick] (-1, 0.5) parabola bend(0,0) (1, 0.5);
		\end{tikzpicture}& \begin{tikzpicture}[baseline = (B.base), scale = 0.7]
		\clip (-0.1, -0.3) rectangle + (2.2, 1.7);
		\node at (1, 0.6)(B){};
		\draw[thick] (0,0) to (2, 0);
		\draw[thick] (0.2, -0.1) to (0.2, 1);
		\draw[thick] (0.6, -0.1) to (0.6, 1);
		\draw[thick] (1, -0.1) to (1, 1);
		\node[above, scale = 0.6, blue] at (1,1){2};
		\node[below, scale = 0.6, blue] at (1.5,0) {4};
		\end{tikzpicture}\\
		\hline
		$\mathrm{IV}$&\begin{tikzpicture}[baseline = (A.base), scale = 0.7]
			\clip (-1,-1.2) rectangle + (2, 2.4);
			\node at (0,0)(A){};
			\draw[thick] (0,1) to (0, -1);
			\draw[thick] (-0.866, 0.5) to (0.866, -0.5);
			\draw[thick] (-0.866, -0.5) to (0.866, 0.5);
		\end{tikzpicture}&\begin{tikzpicture}[baseline = (B.base), scale = 0.7]
		\clip (-0.1, -0.3) rectangle + (2.2, 1.7);
		\node at (1, 0.6)(B){};
		\draw[thick] (0,0) to (2, 0);
		\draw[thick] (0.2, -0.1) to (0.2, 1);
		\draw[thick] (0.6, -0.1) to (0.6, 1);
		\draw[thick] (1, -0.1) to (1, 1);
		\node[below, scale = 0.6, blue] at (1.5,0) {3};
		\end{tikzpicture}\\
		\hline
		$\mathrm{I}_0^*$&\multicolumn{2}{c}{\begin{tikzpicture}[baseline = (A.base), scale = 0.7]
				\clip (-0.3,-0.5) rectangle + (2, 2);
				\node at (0.9, 0.35)(A){};
				\draw[thick] (0,0) to (1.8, 0);
				\draw[thick] (0.2, -0.1) to (0.2, 0.9);
				\draw[thick] (0.5, -0.1) to (0.5, 0.9);
				\draw[thick] (0.8, -0.1) to (0.8, 0.9);
				\draw[thick] (1.1, -0.1) to (1.1, 0.9);
				\node[blue, below, scale = 0.8] at (1.6, 0) {2};
		\end{tikzpicture}}\\
	\hline
	$\mathrm{I}_n^*$&\multicolumn{2}{c}{\begin{tikzpicture}[baseline = (A.base), scale = 0.7]
			\clip (-0.1, -0.5) rectangle + (2,1.9);
			\node at (0.9, 0.35)(A){};
			\draw[thick](0,0) to (0.8, 0);
			\draw[thick](0.1, -0.1) to (0.1, 0.8);
			\draw[thick](0.2, -0.1) to (0.2, 0.8);
			\draw[thick](0.7, -0.1) to (0.7, 0.8);
			\draw[thick, dotted](0.6, 0.7) to (1.2, 0.7);
			\draw[thick](1.1, 0.8) to (1.1, -0.1);
			\draw[thick](1, 0) to (1.8, 0);
			\draw[thick](1.6, -0.1) to (1.6, 0.8);
			\draw[thick](1.7, -0.1) to (1.7, 0.8);
			\node[blue, below, scale = 0.8] at (0.4, 0) {2};
			\node[blue, left, scale = 0.8] at (0.7, 0.35) {2};
			\node[blue, right, scale = 0.8] at (1.1, 0.35) {2};
			\node[blue, below, scale = 0.8] at (1.4, 0) {2};
	\end{tikzpicture}}\\
	\hline
	$\mathrm{II}^*$&\multicolumn{2}{c}{\begin{tikzpicture}[baseline = (A.base), scale = 0.7]
			\clip (-0.1, -0.5) rectangle + (3.6, 1.9);
			\node at (0, 0.35)(A){};
			\draw[thick] (0,0) to (0.8, 0);
			\draw[thick] (0.1, -0.1) to (0.1, 0.8);
			\draw[thick] (0.7, -0.1) to (0.7, 0.8);
			\draw[thick] (0.6, 0.7) to (1.4, 0.7);
			\draw[thick] (1.3, 0.8) to (1.3, -0.1);
			\draw[thick] (1.2, 0) to (2.8, 0);
			\draw[thick] (2, -0.1) to (2, 0.8);
			\draw[thick] (2.7, -0.1) to (2.7, 0.8);
			\draw[thick] (2.6, 0.7) to (3.4, 0.7);
			\node[blue, below, scale = 0.8] at (0.4, 0) {2};
			\node[blue, left, scale = 0.8] at (0.7, 0.35) {3};
			\node[blue, above, scale = 0.8] at (1, 0.7) {4};
			\node[blue, left, scale = 0.8] at (1.3, 0.35) {5};
			\node[blue, below, scale = 0.8] at (1.7, 0) {6};
			\node[blue, left, scale = 0.8] at (2, 0.35) {3};
			\node[blue, left, scale = 0.8] at (2.7, 0.35) {4};
			\node[blue, above, scale = 0.8] at (3, 0.7) {2};
	\end{tikzpicture}}\\
	\hline
	$\mathrm{III}^*$&\multicolumn{2}{c}{\begin{tikzpicture}[baseline = (A.base), scale = 0.7]
			\clip (-0.1, -0.5) rectangle + (3, 1.9);
			\node at (0, 0.35)(A){};
			\draw[thick] (0,0) to (0.8, 0);
			\draw[thick] (0.1, -0.1) to (0.1, 0.8);
			\draw[thick] (0.7, -0.1) to (0.7, 0.8);
			\draw[thick] (0.6, 0.7) to (2, 0.7);
			\draw[thick] (1.3, -0.1) to (1.3, 0.8);
			\draw[thick] (1.9, -0.1) to (1.9, 0.8);
			\draw[thick] (1.8, 0) to (2.6, 0);
			\draw[thick] (2.5, -0.1) to (2.5, 0.8);
			\node[blue, below, scale = 0.8] at (0.4, 0) {2};
			\node[blue, left, scale = 0.8] at (0.7, 0.35) {3};
			\node[blue, above, scale = 0.8] at (1, 0.7) {4};
			\node[blue, left, scale = 0.8] at (1.3, 0.35) {2};
			\node[blue, left, scale = 0.8] at (1.9, 0.35) {3};
			\node[blue, below, scale = 0.8] at (2.2, 0) {2};
	\end{tikzpicture}}\\
	\hline
	$\mathrm{IV}^*$&\multicolumn{2}{c}{\begin{tikzpicture}[baseline = (A.base), scale = 0.7]
			\clip (-0.3, -0.5) rectangle + (3.4, 1.9);
			\node at (0, 0.35)(A){};
			\draw[thick] (0,0) to (3, 0);
			\draw[thick] (0.2, -0.1) to (0.2, 0.8);
			\draw[thick] (0.1, 0.7) to (0.9, 0.7);
			\draw[thick] (1.2, -0.1) to (1.2, 0.8);
			\draw[thick] (1.1, 0.7) to (1.9, 0.7);
			\draw[thick] (2.2, -0.1) to (2.2, 0.8);
			\draw[thick] (2.1, 0.7) to (2.9, 0.7);
			\node[blue, below, scale = 0.8] at (0.6, 0) {3};
			\node[blue, left, scale = 0.8] at (0.2, 0.35) {2};
			\node[blue, left, scale = 0.8] at (1.2, 0.35) {2};
			\node[blue, left, scale = 0.8] at (2.2, 0.35) {2};
	\end{tikzpicture}}\\
	\hline
	\end{tabular}
	\end{table}
	
(This means that for a curve of type $[m]\mathrm{Kod}$, its minimal regular and minimal normal crossings types are given by $m$ times the corresponding entry).
\end{theorem}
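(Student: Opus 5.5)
The plan follows the strategy announced in the introduction. Start from the minimal normal crossings side, where a classification is available, and transport it to the minimal regular side with Theorem \ref{min_reg<->rnc} and Algorithm \ref{algorithm_blowup}.

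First, take Dokchitser's classification \cite{tim_classification} of minimal normal crossings reduction types of genus 1 curves. Its output is that such a type is $[m]$ times one of the normal crossings configurations in the right-hand column. These are the Kodaira configurations, except that for $\mathrm{II}, \mathrm{III}, \mathrm{IV}$ one gets the stars with a central rational component of multiplicity $6,4,3$ and three or four rational tails. If one prefers not to rely on that statement directly, it can be rederived from the adjunction formula $\sum_\Gamma d_\Gamma(2p_\Gamma-2-\Gamma\cdot\Gamma)=0$ together with negative semidefiniteness of the intersection pairing with kernel spanned by $\sum d_\Gamma\Gamma$, which forces the dual graph to be an extended Dynkin diagram or a single curve. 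Passing through the associated type of a normal crossings reduction type, each entry becomes a type in our sense. Compatibility with multiples is immediate: blowups and blowdowns commute with the operation $[m]$, because Algorithm \ref{algorithm_blowup} only scales $d_E$ linearly in the $d_\Gamma$ and otherwise leaves multiplicities alone. So it suffices to treat $m=1$ and multiply at the end.

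Second, for each entry, contract $(-1)$-curves using Algorithm \ref{algorithm_blowup}(2) until none remain. This yields the minimal regular model by Theorem \ref{min_reg<->rnc} and \cite[Proposition 3.19]{liu}. To find the $(-1)$-curves, compute self-intersections from $\Gamma\cdot\sum d_{\Gamma'}\Gamma'=0$; a rational component with self-intersection $-1$ is contractible.

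For $\mathrm{I}_0,\mathrm{I}_n,\mathrm{I}_n^*,\mathrm{II}^*,\mathrm{III}^*,\mathrm{IV}^*$ there are no $(-1)$-curves, since every component is a $(-2)$-curve or a genus 1 curve of square 0. The two columns therefore agree.

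For $\mathrm{IV}$, the central multiplicity-3 component $E$ satisfies $3E^2+3=0$, so it is a $(-1)$-curve and not a tail. Contracting it by case (2a) gives one point with three smooth branches, pairwise intersection $1+1=2$... This needs checking against the recovery formula: $i_p=i_q(B,E)+i_{q'}(B',E)$ with distinct $q,q'$ gives $1+1$, which is wrong. The resolution is that the formula in case (2a) for distinct points should be $i_q(B,E)\,i_{q'}(B',E)$, which is $\mu\mu'$ in the transverse case; this yields three concurrent lines with pairwise intersection 1, as in the Kodaira picture.

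For $\mathrm{III}$, contract the multiplicity-4 centre. Its self-intersection is $-(2+1+1)/4=-1$. This leaves the multiplicity-2 component with square $-1$ meeting two curves; contracting it in turn produces two smooth branches with intersection $1\cdot1+1=2$, a tacnode.

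For $\mathrm{II}$, contract the multiplicity-6 centre, then the multiplicity-3 component, then the multiplicity-2 one. Track the semigroups by the inverse Apéry rule: the multiplicity-1 component acquires the Apéry data of $\mathbb{Z}_{>0}$ lifted with $\mu=2$, giving $\langle2,3\rangle$, a cusp, with the other two tails absorbed along the way. This sequence has already been carried out in reverse in Example \ref{example_genus_3} for similar configurations, and I would follow that computation.

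The main obstacle is the bookkeeping in the $\mathrm{II}$ and $\mathrm{III}$ cases. At each stage one must correctly identify which component is the current $(-1)$-curve and apply the intersection and semigroup recovery formulas of Algorithm \ref{algorithm_blowup}, including the correct product formula for branches meeting $E$ at distinct points. I would carry out these three contraction sequences explicitly, checking the result each time against Theorem \ref{singularity_classification}: an integral curve of arithmetic genus 1 with $g=0$ has exactly one node or one cusp, which confirms the final single-component pictures.
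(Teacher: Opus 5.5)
Your proposal is correct and is essentially the paper's proof: quote Dokchitser's classification of minimal normal crossings types, note that the operation $[m]$ commutes with blowing down, and contract $(-1)$-curves via Algorithm \ref{algorithm_blowup}, with only $\mathrm{II},\mathrm{III},\mathrm{IV}$ needing work (the optional Dynkin-diagram rederivation you sketch is really an argument about the minimal regular model, where $K\cdot\Gamma\ge 0$ for every component, not about the normal crossings models of $\mathrm{II},\mathrm{III},\mathrm{IV}$, but you do not need it). Your correction of Algorithm \ref{algorithm_blowup}(2a) is right and applies more widely than you say: by Propositions \ref{blowup_arithmetic_surface} and \ref{intersection_tangency} the recovered intersection numbers are $i_q(\widetilde{B},\widetilde{B'})+i_q(\widetilde{B},E)\,i_q(\widetilde{B'},E)$ at a common point $q\in E$ and $i_q(\widetilde{B},E)\,i_{q'}(\widetilde{B'},E)$ at distinct points (your $1\cdot 1+1$ for $\mathrm{III}$ already uses the former), and the Ap\'ery shifts stated in (1a) and (2a) have their signs reversed relative to Theorem \ref{blowup_semigroup}, so your ``lifting'' $\mathbb{Z}_{>0}\mapsto\left\langle 2,3\right\rangle$ with $\mu=2$ is the correct direction.
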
\label{genus_1_classification}
\begin{proof}
	The classification of minimal normal crossings reduction types is by \cite[Corollary 9.1(1,2)]{tim_classification}. By Theorem \ref{min_reg<->rnc} this also determines the minimal regular type, and we can determine these by Algorithm \ref{algorithm_blowup}. The only minimal normal crossings reduction types containing (-1)-curves are types $\mathrm{II},\mathrm{III}$ and $\mathrm{IV}$, which change as follows under successive blowing down of these components:
	\begin{figure}[H]
		\centering
		\tikz[remember picture]{\node(1){%
				\resizebox{!}{1.5cm}{%
					\begin{tikzpicture}
						\draw[thick] (1,0.7) to [out = -100, in = 0] (0,0) to [out = 0, in = 100] (1, -0.7);
						\node[left, scale = 0.5, red] at (0,0){$\left\langle2,3\right\rangle$};
			\end{tikzpicture}}};}
		\hspace*{1cm}
		\tikz[remember picture]{\node(2){%
					\resizebox{!}{1.5cm}{%
						\begin{tikzpicture}
							\draw[rotate = -90, thick] (-1, 0.5) parabola bend(0,0) (1, 0.5);
							\draw[rotate = 90, thick] (-1, 0.5) parabola bend(0,0) (1, 0.5);
							\node[left, scale = 0.7, blue] at (-0.25, 0.6) {2};
				\end{tikzpicture}}};}
			\tikz[overlay, remember picture]{
				\draw[latex-, semithick] (1) -- (1-|2.west);}
		\hspace*{1cm}
		\tikz[remember picture]{\node(3){%
					\resizebox{!}{1.5cm}{%
						\begin{tikzpicture}
							\draw[thick] (0,1) to (0, -1);
							\draw[thick] (-0.866, 0.5) to (0.866, -0.5);
							\draw[thick] (-0.866, -0.5) to (0.866, 0.5);
							\node[above, scale = 0.7, blue] at (0,1){3};
							\node[above, scale = 0.7, blue] at (-0.866, 0.5){2};
				\end{tikzpicture}}};}
			\tikz[overlay, remember picture]{
				\draw[latex-, semithick] (2) -- (2-|3.west);}
		\hspace*{1cm}
		\tikz[remember picture]{\node(4){%
					\resizebox{!}{1.5cm}{%
						\begin{tikzpicture}
							\draw[thick] (0,0) to (2, 0);
							\draw[thick] (0.2, -0.1) to (0.2, 1);
							\draw[thick] (0.6, -0.1) to (0.6, 1);
							\draw[thick] (1, -0.1) to (1, 1);
							\node[above, scale = 0.6, blue] at (1,1){3};
							\node[above, scale = 0.6, blue] at (0.6,1){2};
							\node[below, scale = 0.6, blue] at (1.5,0) {6};
				\end{tikzpicture}}};}
			\tikz[overlay, remember picture]{
				\draw[latex-, semithick] (3) -- (3-|4.west);
		}
	\newline
	\tikz[remember picture]{\node(2){%
			\resizebox{!}{1.5cm}{%
				\begin{tikzpicture}
					\draw[rotate = -90, thick] (-1, 0.5) parabola bend(0,0) (1, 0.5);
					\draw[rotate = 90, thick] (-1, 0.5) parabola bend(0,0) (1, 0.5);
		\end{tikzpicture}}};}
	\hspace*{1cm}
	\tikz[remember picture]{\node(3){%
			\resizebox{!}{1.5cm}{%
				\begin{tikzpicture}
					\draw[thick] (0,1) to (0, -1);
					\draw[thick] (-0.866, 0.5) to (0.866, -0.5);
					\draw[thick] (-0.866, -0.5) to (0.866, 0.5);
					\node[above, scale = 0.7, blue] at (0,1){2};
		\end{tikzpicture}}};}
	\tikz[overlay, remember picture]{
		\draw[latex-, semithick] (2) -- (2-|3.west);}
	\hspace*{1cm}
	\tikz[remember picture]{\node(4){%
			\resizebox{!}{1.5cm}{%
				\begin{tikzpicture}
					\draw[thick] (0,0) to (2, 0);
					\draw[thick] (0.2, -0.1) to (0.2, 1);
					\draw[thick] (0.6, -0.1) to (0.6, 1);
					\draw[thick] (1, -0.1) to (1, 1);
					\node[above, scale = 0.6, blue] at (1,1){2};
					\node[below, scale = 0.6, blue] at (1.5,0) {4};
		\end{tikzpicture}}};}
	\tikz[overlay, remember picture]{
		\draw[latex-, semithick] (3) -- (3-|4.west);
	}
	\newline
	\tikz[remember picture]{\node(3){%
			\resizebox{!}{1.5cm}{%
				\begin{tikzpicture}
					\draw[thick] (0,1) to (0, -1);
					\draw[thick] (-0.866, 0.5) to (0.866, -0.5);
					\draw[thick] (-0.866, -0.5) to (0.866, 0.5);
		\end{tikzpicture}}};}
	\hspace*{1cm}
	\tikz[remember picture]{\node(4){%
			\resizebox{!}{1.5cm}{%
				\begin{tikzpicture}
					\draw[thick] (0,0) to (2, 0);
					\draw[thick] (0.2, -0.1) to (0.2, 1);
					\draw[thick] (0.6, -0.1) to (0.6, 1);
					\draw[thick] (1, -0.1) to (1, 1);
					\node[below, scale = 0.6, blue] at (1.5,0) {3};
		\end{tikzpicture}}};}
	\tikz[overlay, remember picture]{
		\draw[latex-, semithick] (3) -- (3-|4.west);
	}
	\end{figure}
	
It's easy to see that we get the same blowup sequence for multiples of types $\mathrm{II}, \mathrm{III}$ and $\mathrm{IV}$, only with all multiplicities multiplied by $m$. This yields the entries in the table above.
\end{proof}
\subsection{Genus 2}
There is a classification of special fibres of minimal regular models for curves of genus 2 by Namikawa-Ueno \cite{namikawa-ueno}. Their proof however is over $\mathbb{C}$ and uses analytic methods.

There is also a classification of all possible normal crossings reduction types by Dokchitser \cite{tim_classification}, which is valid over an arbitrary discrete valuation ring. Using the latter classification and our results on types, we can show that the types appearing in the Namikawa-Ueno classification exhaust all possibilites over an arbitrary discrete valuation ring (even in positive or mixed characteristic), and can find the correspondence between minimal regular and minimal normal crossings reduction types.
\begin{theorem}\label{genus_2_classification}
	Let $\mathcal{O}_K$ be a discrete valuation ring with fraction field $K$ and algebraically closed residue field $k$. Suppose $C/K$ is a smooth projective curve of genus 2. Then the minimal regular type of $C$ is one of the types listed in \cite{namikawa-ueno}. Its minimal normal crossings type is one of the types listed in \cite[Table G2]{tim_classification}. The correspondence between the two types is as given in Table G2 of loc.cit, or as in the tables of \cite{genus_2_tables}.
\end{theorem}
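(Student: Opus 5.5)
The plan mirrors the genus 1 proof. First, Dokchitser's classification \cite{tim_classification} (Table G2 there) lists every possible minimal normal crossings reduction type of a genus 2 curve over an arbitrary discrete valuation ring with algebraically closed residue field; it is valid in any residue characteristic. Given $C/K$ of genus 2, its minimal normal crossings type is therefore one entry $R$ of Table G2, which proves the second assertion. Passing to the associated type of $R$, Theorem \ref{min_reg<->rnc} tells us the minimal regular type of $C$ is determined by $R$. To compute it, we contract $(-1)$-curves one at a time, applying Algorithm \ref{algorithm_blowup}(2) to each blow-down. This is the procedure from the proof of Theorem \ref{min_reg<->rnc}, and it ends at the minimal regular model.

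It remains to carry this out for every entry of Table G2 and compare the outcome with the list of Namikawa--Ueno. To make the comparison feasible I would organise things as follows.

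\textbf{Where the singularities come from.} Away from contracted curves nothing changes. Singularities on the minimal regular model arise only where $(-1)$-curves were blown down, and these are read off by running the Apéry-set rule in reverse.

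\textbf{Restricting the singularity types.} The minimal regular model has arithmetic genus 2 in total. By Theorem \ref{singularity_classification} and Proposition \ref{prop_small_genus_singularities}, a singular point on an integral component of arithmetic genus $\le 2$ can only be a node, a cusp, a $(2,5)$-cusp or a tacnode. Points where several components meet are constrained by the intersection pairing and the adjunction formula. So the possible outcomes are a priori drawn from the same finite vocabulary that Namikawa--Ueno use.

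\textbf{Reusing families.} Many entries of Table G2 come in infinite families indexed by chain lengths. For these, the chains of $(-2)$-curves are untouched, and the contraction only happens near the principal components. So each family needs to be treated once.

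With this organisation, the check is a finite (if long) case-by-case verification. For each $R$, the resulting type is matched with the corresponding Namikawa--Ueno picture, using the dictionary already recorded in Table G2 of \cite{tim_classification} or in \cite{genus_2_tables}. For example, the $(2,5)$-cusp of type $\mathrm{VIII}$-$1$ arises from a chain of contractions like the one in the first row of Example \ref{example_genus_3}, shortened by one step. The tacnode of $\mathrm{III}$-$\mathrm{II}_0$ arises from contracting a $(-1)$-curve meeting two components with multiplicity data giving $i_p = 1+1$ via Algorithm \ref{algorithm_blowup}(2)(a).

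The main obstacle is the sheer bookkeeping of this comparison. Table G2 has many types, the contractions can cascade (a new $(-1)$-curve can appear after a blow-down), and one must confirm that the end result agrees with Namikawa--Ueno's pictures on the nose: multiplicities, semigroups and intersection numbers, not just the configuration. I would handle this by listing, for each principal-component configuration in Table G2, the maximal cascade of contractions. The resulting local data would be computed once with Algorithm \ref{algorithm_blowup} and then propagated across families. Matching would be done against \cite{genus_2_tables}, which already records both columns, so that the actual content of the proof is to verify that our algorithm reproduces those entries.
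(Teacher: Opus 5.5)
Your plan is essentially the paper's proof: the paper also reads off the minimal normal crossings type from \cite[Table G2]{tim_classification}, invokes Theorem \ref{min_reg<->rnc}, and checks each entry against Namikawa--Ueno by running Algorithm \ref{algorithm_blowup} on the successive $(-1)$-contractions (it additionally offers the alternative of verifying the correspondence on the explicit example families in \cite{genus_2_tables}). One caution for the bookkeeping: the contraction rule in Algorithm \ref{algorithm_blowup}(2)(a) must be read with products, namely $i_p(f(B),f(B')) = i_q(B,B') + i_q(B,E)\, i_q(B',E)$ for branches through a common $q\in E$ and $i_q(B,E)\, i_{q'}(B',E)$ for distinct $q\neq q'$ (this is what Propositions \ref{blowup_arithmetic_surface} and \ref{intersection_tangency} give); so your tacnode comes from contracting a $(-1)$-curve through a point where the two branches already cross transversally ($1+1\cdot 1=2$), whereas a $(-1)$-curve meeting two smooth branches transversally at distinct points contracts to an ordinary node, not a tacnode.
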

\begin{proof}
	The minimal normal crossings type of $C$ must be one of the types in \cite[Table G2]{tim_classification}, see Example 10.8 in loc.cit. We know by Theorem \ref{min_reg<->rnc} that the minimal normal crossings type determines the minimal regular type, and we can use Algorithm \ref{algorithm_blowup} to explicitly check that each normal crossings type in the table cited corresponds to the minimal regular (Namikawa-Ueno) type listed. 
	
	Alternatively one can check the correctness of the correspondence by finding examples of curves, one of each Namikawa-Ueno type, compute their minimal normal crossings types and check that they are as given in the tables cited. This is done (for the example families over $\mathbb{C}[[t]]$ listed by Namikawa-Ueno) in \cite[Proposition 2.6, Corollary 2.7]{genus_2_tables}.
\end{proof}

\printbibliography
\end{document}